\documentclass[11pt,reqno]{amsart}
\usepackage{hyperref}
\usepackage{amsfonts,mathrsfs,bbm,rawfonts,amsmath,amssymb}
\usepackage{fullpage, setspace}
\usepackage{graphics, color}
\usepackage{tikz}
\usepackage{todonotes}
\allowdisplaybreaks

\newtheorem{thm}{Theorem}[section]
\newtheorem{lemma}[thm]{Lemma}
\newtheorem*{lemma*}{Lemma}
\newtheorem{prop}[thm]{Proposition}

\newtheorem{cor}[thm]{Corollary}

\newtheorem*{conj*}{Conjecture}

\newtheorem{rmk}[thm]{Remark}
\newtheorem*{rmk*}{Remark}

\numberwithin{equation}{section}
 \newcommand{\al}{\alpha}
 
 \newcommand{\ld}{\lambda}
 
 \newcommand{\de}{\delta}
 \newcommand{\De}{\Delta}
 \newcommand{\ep}{\varepsilon}
 \newcommand{\Si}{\Sigma}
 
 \newcommand{\om}{\omega}
 \newcommand{\Om}{\Omega}
 \newcommand{\ga}{\gamma}
 \newcommand{\Ga}{\Gamma}

 \newcommand{\F}{\mathcal{F}}
 \newcommand{\E}{\mathcal{E}}

 \newcommand{\A}{\mathscr{A}}
 
 \newcommand{\g}{\mathfrak{g}}
 \newcommand{\G}{\mathscr{G}}

 \renewcommand{\P}{\mathcal{P}}

 \newcommand{\R}{\mathbb{R}}
 \newcommand{\C}{\mathbb{C}}

 \newcommand{\norm}[1]{\Vert#1\Vert}

 \renewcommand{\b}{\bar}
 
 \newcommand{\p}{\partial}
 \newcommand{\n}{\nabla}
 \newcommand{\dbar}{\b{\p}}

 \def\<{\langle} \def\>{\rangle}
 \def\({\left(} \def\){\right)}

 \DeclareMathOperator{\tr}{tr}

\subjclass[2020]{Primary 35Q55, 35Q60, 70S15, 58E15, 37K25}

\keywords{Yang-Mills-Higgs-Schr\"odinger flow, infinite dimensional Hamiltonian system, Schr\"odinger flow, Chern-Simons-Schr\"odinger equation}

\title{Yang-Mills-Higgs-Schr\"odinger flow}

\author[B. Chen]{Bo Chen}
\address{ School of Mathematics, South China University of Technology,
\newline\indent
Guangzhou, Guangdong 510640, P.R. China}
\email{cbmath@scut.edu.cn}

\author[C. Song]{Chong Song}
\address{School of Mathematical Sciences, Xiamen University,
\newline\indent
Xiamen, Fujian 361005, P.R. China}
\email{songchong@xmu.edu.cn}

\begin{document}

\begin{abstract}
In this paper, we initiate the study of the Yang-Mills-Higgs-Schr\"odinger(YMHS) flow, i.e. the Hamiltonian flow of the Yang-Mills-Higgs functional defined on a symplectic fiber bundle. The YMHS flow provides a natural gauge-theoretic extension of the classical Schr\"odinger flow within the framework of symplectic reduction theory, and generalizes the Chern-Simons-Schr\"odinger equations to a non-Abelian gauge and non-linear fiber setting. We study its geometric structures and establish the local well-posedness of the corresponding Cauchy problem on compact Riemann surfaces. 
\end{abstract}

\maketitle
\setcounter{tocdepth}{1}
\tableofcontents

\section{Introduction}

\subsection{The YMHS flow and examples}

Let $(\Sigma, \omega, g, j)$ be a K\"ahler manifold and let $\mathcal{P}$ be a principal $G$-bundle over $\Sigma$. Suppose $(M, \Om, h, J)$ is a K\"ahler manifold equipped with a Hamiltonian $G$-action that preserves the K\"ahler structure, with moment map $\mu$. Denote by $\mathcal{F} = \mathcal{P} \times_G M$ the associated bundle. The space of connections $\mathscr{A}$ and the space of sections $\mathscr{S}$ of $\mathcal{F}$ together form an infinite-dimensional K\"ahler manifold $\mathscr{A} \times \mathscr{S}$ with complex structure $\mathcal{J} := (-j, J)$. The gauge group $\mathscr{G}$ acts on this product space, with a shifted moment map
\[
F_{A,\phi}: = \Lambda_\omega F_A + \mu(\phi) - c,
\]
where $F_A$ is the curvature of the connection $A$, $\Lambda_\omega$ denotes contraction with the K\"ahler form, and $c \in \mathfrak{g}$ is a central element.

The Yang-Mills-Higgs (YMH) functional is defined on $\mathscr{A} \times \mathscr{S}$ by
\[
\mathcal{E}(A,\phi):= \frac12 \int_\Sigma \bigl( |D_A \phi|^2 + |F_A|^2 + |\mu(\phi) - c|^2 \bigr) \, dx.
\]
We propose to study the \textit{Yang-Mills-Higgs-Schr\"odinger (YMHS) flow}, i.e. the Hamiltonian flow of the YMH functional
\[ \p_t(A, \phi) = -\mathcal{J}\n \mathcal{E}(A,\phi).\]
More precisely, the YMHS flow is the following system:
\begin{equation}\label{eq-YMHSF}
	\begin{cases}
		\p_t A = j(D_A^*F_A + \phi^*D_A\phi),\\
		\p_t \phi = -J(\phi)(D_A^*D_A\phi + (\mu(\phi)-c)\n\mu(\phi)).
	\end{cases}
	\end{equation}
Since the YMH functional is gauge invariant, Noether's principle asserts that its Hamiltonian flow preserves the moment map. Namely, the YMHS flow \eqref{eq-YMHSF} satisfies the conservation law 
\begin{equation}\label{e:conservation}
	\partial_t F_{A,\phi} = 0.    
\end{equation}
In particular, a YMHS flow with initial data satisfying $F_{A_0,\phi_0}=0$ stays in the zero level set.

When $\F$ is holomorphic and $\Sigma$ is a Riemann surface, the YMHS flow \eqref{eq-YMHSF} is gauge equivalent to a more compact form
\begin{equation*}
	\begin{cases}
		\p_t A = 2d\mu(\phi)\dbar_{A} \phi,\\
		\p_t \phi = -2J(\phi)\dbar_{A}^*\dbar_{A}\phi.
	\end{cases}
\end{equation*}

The YMHS flow naturally unifies and generalizes several important systems, which we now briefly describe.
\begin{itemize}
	\item \textbf{The Schr\"odinger flow.}
	When the Lie group $G$ is trivial, the connection $A$ vanishes and the section $\phi$ reduces to a map from $\Sigma$ to $M$. Then the YMH functional becomes the standard Dirichlet energy:
    \[\mathcal{E}(\phi):=\int_{\Si}|d \phi|^2dx,\] 
    and the corresponding YMHS flow \eqref{eq-YMHSF} reduces to the classical Schr\"odinger flow
	\begin{equation}\label{e:SCH}
	\partial_t\phi=J(\phi)\tau_g(\phi),    
	\end{equation}
	where $\tau_g(\phi)=-\n^* d\phi$ is the tension field of $\phi$. 
    
    The Schr\"odinger flow \eqref{e:SCH} was introduced independently by Ding and Wang \cite{DW01} and by Terng and Uhlenbeck \cite{Uh1} twenty years ago. One of its most prominent examples is the Landau-Lifshitz equation \cite{LL35}:
    \[\p_t \phi=\phi\times \De_g \phi,\]
    where $M=\mathbb{S}^2 \hookrightarrow \R^3$, and $\phi\times: T_{\phi}\mathbb{S}^2\to T_{\phi}\mathbb{S}^2$ is the standard complex structure on $\mathbb{S}^2$. 
	
	When $M$ supports a Hamiltonian group action, the general YMHS flow \eqref{eq-YMHSF} provides a natural extension of the Schr\"odinger flow \eqref{e:SCH} in the framework of symplectic reduction theory. Namely, the YMHS flow stays in a level set of the moment map due to the conservation law \eqref{e:conservation}. By symplectic reduction theory, the YMHS flow essentially reduces to a Hamiltonian flow on the symplectic quotient space
	\[
	\{(A,\phi)\in\mathscr A\times\mathscr S | F_{A,\phi}=F_{A_0,\phi_0}\}/\mathscr{G}.
	\]
	
	\item \textbf{The Chern-Simons-Schr\"odinger equation.}
Let $\Sigma$ be a Riemann surface, $G=U(1)$, $M=\mathbb C$, and normalize $\mu(\phi)-c=\frac12(|\phi|^2-1)$. Allowing the usual Higgs coupling $\lambda$, the corresponding Hamiltonian is
\[
 \mathcal \E_\lambda(A,\phi)=\frac12\int_\Sigma\left(|D_A\phi|^2+|F_A|^2+\frac\lambda4(|\phi|^2-1)^2\right)dx.
\]
Its Hamiltonian evolution is the Chern--Simons--Schr\"odinger (CSS) system
\begin{equation}\label{e:CSS}
\begin{cases}
 \partial_tA=-D(*F_A)-*\operatorname{Im}(\bar\phi D_A\phi),\\
 i\partial_t\phi=-D_A^*D_A\phi+\dfrac\lambda2(|\phi|^2-1)\phi,\\ 
 *F_A+\dfrac12(|\phi|^2-1)=0.
\end{cases}
\end{equation}
Note that the last equation of \eqref{e:CSS} is exactly the conservation law \eqref{e:conservation} restricted on the zero leverl set of $F_{A,\phi}$.

The CSS equation models anyonic dynamics in planar condensed matter systems and plays a central role in the study of fractional statistics, the quantum Hall effect, and vortex dynamics \cite{Man97}. Therefore, the YMHS flow naturally extends the CSS system to holomorphic fiber bundles over higher-dimensional manifolds with non-flat fibers and non-Abelian structure groups.
	
	\item \textbf{The gauged Landau-Lifshitz system.}
	Let $M=\mathbb S^2\subset\mathbb R^3$ be the standard sphere, and let $G=U(1)\subset O(3)$ act on $\mathbb S^2$ by rotations about the $z$-axis. This action is Hamiltonian, with a moment map given by
	\[
	\mu(\phi)
	=
	(1-e_3\cdot\phi)\,e_3\times,
	\]
	where $e_3=(0,0,1)$, and $e_3\times$ denotes the infinitesimal generator
of the rotation about the $z$-axis. In this case, the YMH functional reduces to the gauged $O(3)$ sigma model introduced in \cite{Sch},
	\[
	\mathcal E(A,\phi)
	=
	\frac12\int_\Sigma
	\left(
	|D_A\phi|^2
	+
	|F_A|^2
	+
	(1-e_3\cdot\phi)^2
	\right)dx.
	\]
    This model describes a planar ferromagnet coupled to a Maxwell gauge field. The corresponding YMHS flow is gauge-equivalent to the gauged Landau-Lifshitz system studied in \cite{JHS}, thereby providing a natural Hamiltonian framework for gauge-coupled spin dynamics.
\end{itemize}

\subsection{Related works}

Although the YMHS flow has a rich physical background and substantial geometric significance, relatively few analytical results are available in the literature.

We first recall relevant results on the Cauchy problem of the Schr\"odinger flow~\eqref{e:SCH}. The existence of regular solutions from closed Riemannian manifolds or $\mathbb{R}^n$ into K\"ahler manifolds was established by Ding and Wang \cite{DW98, DW01} using an intrinsic geometric energy method. For low-regularity initial data, Nahmod, Stefanov, and Uhlenbeck \cite{NSU} obtained a near-optimal (though conditional) local well-posedness result for maps from $\mathbb{R}^2$ into the sphere or hyperbolic space. Global well-posedness for small initial data from $\mathbb{R}^n$ ($n \ge 2$) into $\mathbb{S}^2$ was extensively studied by Bejenaru, Ionescu, Kenig, and Tataru \cite{B1,BIK,BIKT}, and later extended to compact K\"ahler targets by Z. Li \cite{L1,L2}. More recently, Chen and Wang \cite{CW23,CW25',CW25} developed new techniques for the Neumann boundary value problem, in particular addressing compatibility conditions in a systematic way. Issues related to uniqueness were studied in \cite{SW18,CW23}.

For CSS equations \eqref{e:CSS}, the analytical behavior depends strongly on $\lambda$. On the zero level, the constraint rewrites the energy as
\[
 \mathscr E_\ld(A,\phi)=\int_{\mathbb R^2}\left(|\nabla_A\phi|^2+\frac{\lambda+1}{4}(|\phi|^2-1)^2\right)dx,
\]
up to the normalization used above. 
In the negative energy regime ($\lambda \leq -1$), Berge, de Bouard, and Saut \cite{BBS95} established the existence of local strong solutions for $W^{2,2}$ initial data and constructed finite-time blow-up solutions. They also obtained global weak solutions for $W^{1,2}$ initial data under a small $L^\infty$ assumption. Huh \cite{Huh09} further constructed blow-up solutions and proved local well-posedness for $W^{1,2}$ initial data. Liu, Smith, and Tataru \cite{LST14} extended the local theory to even rougher function spaces. In weighted Sobolev spaces, global well-posedness for small data was established by Oh and Pusateri \cite{OsP15}.
Subsequently, Liu and Smith \cite{LS16} obtained local well-posedness for the equivariant CSS equation in the critical $L^\infty$ space and proved a sharp threshold criterion for global existence. For the construction and detailed analysis of finite-time blow-up solutions in the equivariant setting, we refer to \cite{KiK23, KiK23', KKO24} and the references therein.

In the positive energy regime ($\lambda > -1$), Lim, Zhou, and Min \cite{LZM18} proved global existence of solutions. When the base space is generalized from $\mathbb{C}$ to a Riemann surface, the theory becomes considerably more limited. For $\lambda > 0$, Demoulini \cite{Dem07} established the existence of global strong solutions. Later, Demoulini and Stuart \cite{DeS09} studied the adiabatic limit, describing the asymptotic dynamics restricted to the moduli space of symplectic vortices as $\lambda \to 1$. Their approach appears to extend, at least partially, to the regime $\lambda > -1$. However, for $\lambda \leq -1$, global well-posedness and blow-up for $\lambda\leq-1$ on curved surfaces remain largely open.

We also mention that the moduli space of the symplectic vortices (i.e. minimal points of the YMH function) is closely related to construction of the Hamiltonian Gromov-Witten invariants~\cite{CGS00, M-thesis, MT2009}. The general critical points and gradient flow of the YMH functional has also been studied, see for example~\cite{S2016, Y2014, SW2017, V2016}.

\subsection{Main results}

Compared to the special cases presented above, the YMHS flow defined on a generic fiber bundle exhibits stronger nonlinearity, due to the non-Abelian gauge group together with the nontrivial geometry of fiber and base manifolds. In fact, the equation~\eqref{eq-YMHSF} is a highly nonlinear Schr\"odinger type system which is degenerate due to the gauge invariance of the YMH functional. As a consequence, the analysis of the system~\eqref{eq-YMHSF} is more challenging than both the Schr\"odinger flow~\eqref{e:SCH} and the CSS equation~\eqref{e:CSS}. 

In this paper, we study of the following initial value problem of the general YMHS flow:
\begin{equation}\label{IP-YMHSF}
    \begin{cases}
	\p_t A = j(D_A^*F_A + \phi^*D_A\phi),\\
    \p_t \phi = -J(D_A^*D_A\phi + (\mu(\phi)-c)\n\mu(\phi)), \\
    \phi(0) =\phi_0, \quad A(0) = A_0.
\end{cases}  
\end{equation} 
Our main results establish the local well-posedness of \eqref{IP-YMHSF} defined on a holomorphic fiber bundle over a compact Riemann surface $\Si$, where the fiber space is a compact K\"ahler manifold. 

For $k\in \mathbb{N}$, we define the Soblev-type energy
\[E_k(A,\phi) :=\norm{\n_{A}\phi}^2_{H^{k-1,2}}+\norm{F_{A,\phi}}^2_{H^{k,2}}
= \sum_{i=0}^{k-1}\norm{\n^i_A\n_A \phi}^2_{L^2} +\sum_{i=0}^{k}\norm{\n^i_AF_{A,\phi}}^2_{L^2}. \]
See Section \ref{s-Sobolev} below for the definition of Sobolev spaces on fiber bundles.

\begin{thm}\label{main-thm2}
Let $k\geq 3$, and $A_{ref}\in \mathscr{A}$ be a smooth reference connection. Suppose that the initial value $(A_0=A_{ref}+a_0,\phi_0)$ satisfies
\begin{equation}\label{reg-initial-data}
 (a_0,\phi_0)\in H^{k+1,2}_{A_{ref}}\times W^{k, 2}_{A_{ref}}.   
\end{equation} 
Then there exists $T_0>0$, depending only on $E_3(A_0,\phi_0)$ and the geometry of $M$ and $\Si$, and a strong solution $(A=A_{ref}+a,\phi)$ to the YMHS flow \eqref{IP-YMHSF} on $[0,T_0]$, such that
\begin{align}
     a \in W^{1,\infty}([0,T_0], H^{k-1,2}_{A_{ref}}),\quad
	\phi \in L^\infty([0,T_0], W^{k,2}_{A_{ref}}), \label{es-ymhsf}
\end{align}
and
\begin{align}
 \sup_{0\leq t\leq T_0}E_k(A,\phi)\leq C(T_0,E_k(A_0,\phi_0)). \label{es-E-k}   
\end{align}
In particular, if $(A_0, \phi_0)$ is smooth, then the solution $(A, \phi)$ is also smooth on $[0,T_0]$, and satisfies the energy bound \eqref{es-E-k} for all $k\in \mathbb{N}$.
\end{thm}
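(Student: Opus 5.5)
We will follow the classical strategy for Schr\"odinger flows due to Ding--Wang: regularize the degenerate Hamiltonian system by a parabolic perturbation, derive uniform gauge-covariant energy estimates for the regularized solutions, and pass to the limit.

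\textbf{Regularization.} For $\ep\in(0,1]$ consider the complex Ginzburg--Landau type system
\[
\p_t(A,\phi) = -(\ep + \mathcal{J})\n\mathcal{E}(A,\phi) - \ep\, \mathcal{D}_{A,\phi}\xi ,
\]
where $-\ep\n\mathcal{E}$ is the YMH gradient flow. The term $\mathcal{D}_{A,\phi}\xi$ is an infinitesimal gauge transformation, with $\xi=D_A^*a$ for $a=A-A_{ref}$. It is a DeTurck-type term that cures the degeneracy of the gauge-invariant operator $D_A^*F_A$ in the $A$-component. Writing $A=A_{ref}+a$ and embedding $M$ isometrically in some $\R^N$ (so $\phi$ becomes a section of a vector bundle), the system is strictly parabolic with principal part $(\ep-\mathcal{J})\De$. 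Standard parabolic theory then gives a smooth solution $(A_\ep,\phi_\ep)$ on a short interval $[0,T_\ep]$ for smooth data. We first approximate $(a_0,\phi_0)$ by smooth data, keeping $E_k$ under control.

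\textbf{Uniform estimates (main step).} We derive $\ep$-independent differential inequalities for $E_k(A_\ep,\phi_\ep)$. The key observation is that the conservation law \eqref{e:conservation} survives in the regularized flow as a heat-type equation for $F_{A,\phi}$: we expect $\p_t F_{A,\phi} = -\ep(D_A^*D_A F_{A,\phi} + \dots)$ up to a gauge term, and the gauge term does not affect gauge-covariant quantities. For the $\phi$-part we differentiate $\n_A^{i}\n_A\phi$ in time, commute covariant derivatives, and integrate by parts. The leading term $\<\n^{k-1}_A\n_A(J\n_A^*\n_A\phi),\n^{k-1}_A\n_A\phi\>$ cancels by the antisymmetry of $J$ and $\n J=0$ (K\"ahler). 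The $\ep$-dissipative terms have a good sign. Commutators produce curvature $F_A$ terms, which we control through $\Lambda_\omega F_A = F_{A,\phi}-\mu(\phi)+c$ (in dimension two $F_A=\Lambda_\omega F_A\,\om$), together with the $\p_tA$ terms involving $\phi^*D_A\phi$.

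Using Sobolev embedding on the compact surface ($H^{2}\subset L^\infty$) and the compactness of $M$, we aim for
\[
\frac{d}{dt}E_3 \le C(1+E_3)^{p}, \qquad \frac{d}{dt}E_k\le C(E_{k-1})(1+E_k),
\]
with constants depending only on the geometry. This gives a uniform time $T_0=T_0(E_3(A_0,\phi_0))$ and the bound \eqref{es-E-k} by induction on $k$. We also need to control $a$ itself in $H^{k+1}$ relative to $A_{ref}$ through the DeTurck term. For this we combine $\Lambda_\omega F_A$ with $D_A^*a$ and use elliptic estimates for $d_{A_{ref}}+d^*_{A_{ref}}$.

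\textbf{Limit and the main obstacle.} We expect the main obstacle to be this $A$-component. In the Hamiltonian limit the gauge fixing is lost, so we must show that the $H^{k-1}$ bound on $\p_t a$ and the $H^{k+1}$-type bound on $a$ come only from $E_k$ via the curvature, plus a uniform Coulomb-type control. If that fails, we would instead work in a temporal or Coulomb gauge relative to $A_{ref}$, adjusting by gauge transformations solved elliptically at each time. This explains the loss from $H^{k+1}$ to $W^{1,\infty}H^{k-1}$ in \eqref{es-ymhsf}.

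With the uniform bounds in hand, Aubin--Lions compactness gives subsequential convergence as $\ep\to0$ to a strong solution of \eqref{IP-YMHSF} with the stated regularity. Lower semicontinuity transfers \eqref{es-E-k} to the limit. Smoothness for smooth data follows because $T_0$ depends only on $E_3$, so the bound \eqref{es-E-k} holds for every $k$ on the same interval $[0,T_0]$.
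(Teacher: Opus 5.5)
Your overall architecture matches the paper's. It has four steps: parabolic regularization, a De Turck term for short-time existence at fixed $\ep$, $\ep$-uniform inequalities for the gauge-invariant energies $E_k$, then $\ep\to0$ together with approximation of rough data. Your $E_k$ estimates also follow the same lines as Lemma~\ref{eq-high-PYMHSF} and the estimates after it.

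\textbf{The gap.} The genuine gap is exactly the ``main obstacle'' you flag, and it is central to the proof. Because $E_k$ is gauge invariant, it gives no compactness for $(a_\ep,\phi_\ep)$ in the fixed spaces built on $A_{ref}$. Even turning the intrinsic bound on $\n_{A_\ep}\phi_\ep$ into a $W^{k,2}_{A_{ref}}$ bound on $\phi_\ep$ (Lemmas~\ref{equiv-norm} and \ref{equiv-es-f}) needs an $\ep$-uniform bound on $a_\ep$. Your primary plan does not supply one:
\begin{itemize}
\item You keep the De Turck term with weight $\ep$ in the regularized flow, so $\p_t a_\ep$ contains $-\ep D_{A_\ep}D_{A_\ep}^*a_\ep$. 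This term is second order in $a_\ep$ and is not controlled by any gauge-invariant quantity.
\item A gauge-fixing term of strength $\ep$ controls $D^*_{A_\ep}a_\ep$ only with $\ep$-dependent constants. So elliptic estimates for $d_{A_{ref}}+d^*_{A_{ref}}$ cannot give an $\ep$-uniform $H^{k+1}$ bound on $a_\ep$.
\item The theorem does not assert such a bound anyway, and your temporal/Coulomb-gauge fallback is only named, not carried out.
\end{itemize}

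\textbf{The missing idea.} Gauge the De Turck term away rather than carrying it into the limit. After solving \eqref{eq-p-YMHS}, the paper applies the pointwise ODE gauge transformation \eqref{gauge}. It then works with the gauge-covariant regularized flow \eqref{eq-p-YMHS-0}, where $\p_tA$ has no infinitesimal gauge term. On a surface, $jD_A^*\omega=-D_A(*\omega)$ for $ad\P$-valued $2$-forms, and $*F_A=\Psi-\mu(\phi)+c$. Together these give Lemma~\ref{l:3-1}, and hence
\[
\p_t a_\ep=-(\ep j+I)\bigl(D_{A_\ep}\Psi-2d\mu(\phi_\ep)\dbar_{A_\ep}\phi_\ep\bigr).
\]
This is first order in gauge-invariant quantities, so $\|\p_t a_\ep\|_{H^{k-1,2}_{A_\ep}}$ is bounded by a function of $E_k$ alone, uniformly in $\ep$. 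Integrating in time from $a_0$ and using Lemmas~\ref{equiv-es-a}, \ref{equiv-es-f} and \ref{equiv-norm} then gives two uniform bounds:
\begin{itemize}
\item $a_\ep$ in $W^{1,\infty}([0,T_0],H^{k-1,2}_{A_{ref}})$;
\item $\phi_\ep$ in $L^\infty([0,T_0],W^{k,2}_{A_{ref}})$.
\end{itemize}
These are exactly what compactness requires, and they are the origin of the loss of regularity on $a$ in \eqref{es-ymhsf}. Better regularity of $a$ appears only after passing to an $O(1)$ De Turck gauge (Proposition~\ref{sharp-reg-ymhsf}), not through an $\ep$-weighted one.

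\textbf{A related inaccuracy.} Your regularized system does not have principal part $(\ep-\mathcal J)\De$. The $A$-part of $\mathcal J\n\E$ has principal part $jD_A^*D_A=-D_A*D_A$, which is pure gauge with nilpotent symbol. So for small $\ep$ your system is not strongly parabolic, only parabolic in the Petrowsky sense, and ``standard parabolic theory'' needs more care. The paper avoids this by moving $*F_{\bar A}$ into the gauge term of \eqref{eq-p-YMHS}, which leaves $\ep$ times the Hodge Laplacian as the principal part of the $\bar a$-equation.
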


\begin{rmk}\label{r:deturck}
The apparent loss of regularity on $a$ in \eqref{es-ymhsf} is due to spacial gauge invariance of the YMHS flow and can be improved through a time-dependent gauge transformation. In fact, by Proposition \ref{sharp-reg-ymhsf} below, there exists a space-time gauge transformation $s$ in which the solution satisfies a De Turck YMHS flow and enjoys the regularity
\[a\in L^\infty([0,T_0], H^{k,2}_{A_{ref}})\cap L^2([0,T_0], H^{k+1,2}_{A_{ref}}),\,\, \phi\in L^\infty([0,T_0], W^{k, 2}_{A_{ref}}).\] 
\end{rmk}

To state the next result on uniqueness of the YMHS flow \eqref{IP-YMHSF}, we fix a smooth reference connection $A_{ref}\in \mathscr{A}$ and define 
\begin{align*}
\mathscr{W}_1:=&\{(A,\phi)|\quad A-A_{ref}\in H^{2,2}_{A_{ref}},\quad \phi\in W^{3,2}_{A_{ref}}\},\\
\mathscr{W}_2:=&\{(A,\phi)|\quad F_{A,\phi}\in H^{1,3}_{A_{ref}}\},\\
\mathscr{W}_3:=&\{(A,\phi)|\quad A-A_{ref}\in H^{2,2}_{A_{ref}},\quad\p_t A\in H^{1,2}_{A_{ref}},\quad  \phi\in W^{3,2}_{A_{ref}}\}.
\end{align*}

\begin{thm}\label{main-thm3}
Let $(A_1,\phi_1), \,(A_2,\phi_2)\in L^\infty([0,T_0],\mathscr{W}_3)$ be two solutions to the YMHS flow \eqref{IP-YMHSF} with the same initial data $(A_0,\phi_0)\in \mathscr{W}_1\cap \mathscr{W}_2$. Then we have
\[(A_1,\phi_1)=(A_2,\phi_2) \quad\text{a.e. on}\quad \Si\times [0,T_0].\] 

In particular, the solution to the YMHS flow \eqref{IP-YMHSF} constructed in Theorem \ref{main-thm2} is unique.
\end{thm}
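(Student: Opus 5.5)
The plan is to prove uniqueness by a geometric energy method for the difference of two solutions, after removing the gauge degeneracy. The system \eqref{IP-YMHSF} is only weakly dispersive in $A$: the operator $jD_A^*F_A$ is degenerate along gauge directions, so a naive $L^2$ difference estimate loses derivatives. I will therefore transform each solution $(A_i,\phi_i)$ by a time-dependent gauge $s_i$ into a DeTurck-type YMHS flow, as in Proposition \ref{sharp-reg-ymhsf} cited in Remark \ref{r:deturck}. In that flow the connection equation acquires the missing term $jD_AD_A^*a$, with $a=A-A_{ref}$, and becomes a (rotated) elliptic–Schr\"odinger system. The gauge $s_i$ solves a first-order ODE in time, $s_i^{-1}\p_t s_i = -D^*_{A_i}(A_i-A_{ref})$ or a similar expression. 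Solvability and regularity of $s_i$ use only $A_i-A_{ref}\in H^{2,2}$ and $\p_tA_i\in H^{1,2}$, which is exactly what $\mathscr W_3$ provides. Both gauges start at the identity, so the transformed solutions share the initial data $(A_0,\phi_0)$.

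For the transformed solutions, I will embed $M$ isometrically in some $\R^N$ and compare $\phi_1,\phi_2$ in the ambient space. This follows the Ding–Wang / Song–Wang approach to uniqueness of Schr\"odinger flows \cite{SW18}. To avoid losing a derivative from $J(\phi_1)-J(\phi_2)$ acting on $\Delta\phi$, I will use parallel transport along the short geodesic between $\phi_1(x,t)$ and $\phi_2(x,t)$ to define an intrinsic difference $V\in\phi_2^*TM$, and set $b=A_1-A_2$. The quantity to control is
\[
\mathcal D(t)=\norm{b}_{H^{1,2}}^2+\norm{V}_{H^{1,2}}^2 ,
\]
or possibly $\norm{b}^2_{L^2}+\norm{V}^2_{H^{1,2}}$ plus a $\norm{D^*b}^2$ term. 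I will differentiate $\mathcal D$ in $t$ and use the Hamiltonian (skew-adjoint $J$) structure to cancel the top-order terms $\<J\Delta V,V\>$. The curvature cross terms $\<F_{A_1}-F_{A_2},\cdot\>$ will be handled by integrating by parts against $D^*F$. The aim is the Gronwall inequality $\mathcal D'\le C(t)\mathcal D$ with $C\in L^1$, where $C$ depends on $\norm{\n_A\phi_i}_{W^{2,2}}$, $\norm{a_i}_{H^{2,2}}$, $\norm{\p_t a_i}_{H^{1,2}}$, and the $L^\infty$ bounds from the Sobolev embeddings $H^{2,2}\hookrightarrow L^\infty$ and $H^{1,2}\hookrightarrow L^p$ on the surface.

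The role of $\mathscr W_2$ is the moment map. By \eqref{e:conservation}, $F_{A_i,\phi_i}(t)=F_{A_0,\phi_0}$ for all $t$, and this is in $H^{1,3}$. Hence $\Lambda F_{A_1}-\Lambda F_{A_2}=\mu(\phi_2)-\mu(\phi_1)$ exactly, which lets me trade the top-order curvature difference for $V$. On a surface $F_A=\Lambda F_A\,\omega$, so $\norm{F_{A_1}-F_{A_2}}\lesssim\norm{V}$. The $H^{1,3}$ regularity handles a term like $\<D^*F_{A,\phi}, b\>$ that appears after the gauge change, via Hölder with $L^3\times L^6$. Once $\mathcal D\equiv0$, the transformed solutions agree. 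Both gauges then solve the same ODE with the same data and coincide, so the original solutions agree a.e.

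The main obstacle is the derivative loss in the coupled terms. The clearest example is $\phi^*D_A\phi$ and $D_A^*D_A\phi$ when $A_1\neq A_2$, since $(A_1-A_2)\cdot\n\phi$ must be estimated in $H^{1}$ while only $b\in H^1$ is controlled. My first attempt is to weight $\norm{b}_{H^{1,2}}$ and $\norm{V}_{H^{1,2}}$ so that the worst cross terms $\<\n(b\cdot\n\phi),\n V\>$ and $\<\n(\phi^*\n V),\n b\>$ cancel by the symplectic pairing between $\mu$ and $J$: $d\mu(\phi)=\iota_{X}\Omega$, which links $\phi^*$ to $J$. If that cancellation fails, I will fall back to $\mathcal D=\norm{b}^2_{L^2}+\norm{V}^2_{H^{1,2}}$ and use the DeTurck parabolic smoothing of $b$, which gives $L^2_tH^{1}$ for the differences, to absorb the loss.
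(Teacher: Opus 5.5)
Your outer structure agrees with the paper. You put each solution in the De Turck gauge \eqref{eq-2}, compare the sections fiberwise by parallel transport, close a Gronwall inequality from zero initial value, and finally get $s_1=s_2$ from the pointwise ODE $s^{-1}\p_ts=-D^*_{A}(A-A_0)$ once the common De Turck solution $A$ is known. The energy argument itself, however, has a genuine gap. The identity $\Lambda F_{A_1}-\Lambda F_{A_2}=\mu(\phi_2)-\mu(\phi_1)$ holds in the original gauge. You run the estimates for the transformed pairs, and there $\Psi_i=s_i^{-1}F_{A_0,\phi_0}s_i$, i.e.\ $\p_t\Psi_i=[\Psi_i,B_i]$ by \eqref{eq-Psi}, with $s_1\neq s_2$ a priori. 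For non-abelian $G$ the difference $\bar\Psi=\Psi_1-\Psi_2$ need not vanish. Hence $*(F_{A_1}-F_{A_2})=\bar\Psi-(\mu(\phi_1)-\mu(\phi_2))$ is \emph{not} controlled by $V$; your identity is valid only when $G$ is abelian. So $\bar\Psi$ is a new unknown that must enter the energy. Because the equation \eqref{eq-B} for $B$ contains $\n_A^*\n_A\Psi$, the energy must also contain $\Theta=\n_{A_1}\Psi_1-\n_{A_2}\Psi_2$. Both obey ODEs in $t$, \eqref{eq1-H3} and \eqref{eq2-H3}, driven by $\bar B$ and $\n\bar B$. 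This is where $\mathscr W_2$ is really used: $\norm{\n_{A_1}\Psi_1}_{L^3}$ is paired against $\bar B,\bar\Psi\in H^{1,2}\subset L^6$.

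You also misidentify the structure of the gauge-fixed system, and with it the mechanism that absorbs the derivative loss. A gauge change only adds terms $D_A\xi$ to $\p_tA$, so the De Turck term is $-D_AD_A^*(A-A_0)$, not $jD_AD_A^*a$. The defining relation $s^{-1}\p_ts=-D_A^*(A-A_0)$ involves the \emph{transformed} connection. So the gauge comes from the strictly parabolic equation \eqref{eq-s-1}, not from an ODE driven by the original $A_i$; this is what gives $s_i\in W^{3,2}$ and keeps the transformed connections in $H^{2,2}$. The resulting system is of mixed type: only $B$ is smoothed, while the curl part of $b$ is not smoothed at all, being tied to $\bar\Psi$ and $\mu(\phi_1)-\mu(\phi_2)$. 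There is therefore no $L^2_tH^1$ smoothing of $b$ itself. Differentiating $\norm{b}^2_{H^{1,2}}$ in time produces $\<\n(d\mu(\phi_1)\dbar_{A_1}\phi_1-d\mu(\phi_2)\dbar_{A_2}\phi_2),\n b\>$, a second-order difference of the sections, and your hoped-for symplectic cancellation of it is not substantiated.

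The paper never differentiates $\n\bar A$ in time; it proceeds as follows.
\begin{itemize}
\item Its energy is $\norm{\bar A}^2_{L^2}+\norm{\bar B}^2_{L^2}+\norm{\bar\Psi}^2_{L^2}+\norm{\Theta}^2_{L^2}+\norm{d(\phi_1,\phi_2)}^2_{L^2}+\norm{\Phi}^2_{L^2}$, with $\Phi=\mathscr P\n_{A_1}\phi_1-\n_{A_2}\phi_2$.
\item Every $\n\bar B$ term is absorbed by the dissipation $-\frac14\norm{\n_{A_2}\bar B}^2_{L^2}$ coming from \eqref{eq-B}.
\item $\norm{\bar A}_{H^{1,2}}$ is recovered elliptically through the Bochner formula, using $D_{A_1}\bar A=\bar\Psi\om_\Si-(\mu(\phi_1)-\mu(\phi_2))\om_\Si+\bar A\#\bar A$ and $D^*_{A_1}\bar A=-\bar B+\bar A\#(A_2-A_0)$.
\end{itemize}
Finally, $\mathscr P$ must be built from the interpolated space-time connection $(1-s)(B_1\,dt+A_1)+s(B_2\,dt+A_2)$. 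The commutator errors $\mathscr P\n_{A_1}-\n_{A_2}\mathscr P$ (and their time analogues) are then controlled by twisted Jacobi field estimates; your sketch does not address these terms.
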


The proof of Theorems \ref{main-thm2} is based on a parabolic approximation scheme. More precisely, we consider the following perturbed YMHS flow 
\begin{equation}\label{eq-p-YMHSF}
	\begin{cases}
		\p_t\phi = -(J(\phi)+\ep I)\(D_A^*D_A\phi+ (\mu(\phi)-c)\n\mu(\phi)\),\\
		\p_tA= (j-\ep I)\(D_A^*F_A+\phi^*D_A\phi\),
	\end{cases}
\end{equation}
with $\ep>0$, and use the solutions of \eqref{eq-p-YMHSF} to approximate a solution of the YMHS flow \eqref{IP-YMHSF} by letting $\ep\to 0$. Note that the perturbed flow \eqref{eq-p-YMHSF} is only weakly parabolic due to gauge invariance. We therefore apply a De Turck trick to obtain a regular solution to the perturbed flow.

The core ingredient of the proof then lies in deriving uniform estimates for the perturbed flow \eqref{eq-p-YMHSF}. In view of the gauge invariance of the system, we define geometric energies of the solutions using the Sobolev spaces on fiber bundles. Since the connection is evolving along the flow, we need to explore the relations of Sobolev norms w.r.t. different connections. We also evoke Sobolev interpolation inequalities and comparison theorems between intrinsic and extrinsic Sobolev norms of the section. The assumption that $\Sigma$ is a 2-dimensional surface esures that a uniform bound on the quantity $F_{A,\phi}$ yields bounds for the full curvature $F_A$. Note that, although the conversation law is no longer valid due to the perturbation, the quantity $F_{A,\phi}$ is still almost conserved.

The proof of Theorem \ref{main-thm3} is based on a De Turck trick and a geometric energy method adapted for fiber bundles. First, we break the gauge symmetry by introducing a parabolic gauge, which eliminates the degeneracy of the YMHS flow. The corresponding transformed system, termed the De Turck YMHS flow, reads
\begin{equation}\label{eq-DT-YMHSF}
\begin{cases}
\p_t A-D_{A}B=-(D_{A} F_{A, \phi} - 2d\mu(\phi)\dbar_{A}\phi),\\
\p_t\phi+B\phi=-J(D^*_{A}D_{A}\phi + (\mu(\phi)-c)\n \mu(\phi)),
\end{cases}
\end{equation}
where $B=-D^*_{A}(A-A_0)$. Remarkably, in terms of its principal symbol, the De Turck YMHS flow~\eqref{eq-DT-YMHSF} behaves neither as a pure Schr\"odinger equation nor as a standard parabolic one, but rather as an intermediate mixed-type system.

Next, we establish the uniqueness of solutions to the De Turck YMHS flow~\eqref{eq-DT-YMHSF} by employing a geometric energy method. Compared to the CSS system, the non-linear geometry of the fiber bundle brings extra difficulties. While our approach is partially inspired by the work of Song-Wang~\cite{SW18} on the uniqueness of the classical Schr\"odinger flow, extending these ideas to the gauged system on fiber bundles requires substantial new ingredients. In fact, we need to measure the differences of two sections using parallel transport and develop analytical estimates for twisted Jacobi fields tailored specifically for fiber bundles (see Appendix \ref{s-twised-Jacobi}).

\medskip

The rest of this paper is structured as follows. Section \ref{s-pre} recalls some background knowledge and reviews the Sobolev inequalities on fiber bundles. In Section \ref{s-YMHSF}, we introduce the YMHS flow and explore its geometric structures. In Section \ref{s-perturbed-YMHSF}, we derive uniform estimates for the perturbed YMHS flow and prove Theorem \ref{main-thm2}. In Section \ref{s-uiqueness}, we prove the uniqueness of the De Turck YMHS flow and Theorem \ref{main-thm3}. Appendix \ref{s-evolution-eq} contains the higher-order evolution equations for perturbed YMHS flow, and Appendix \ref{s-twised-Jacobi} supplements the desired estimates for twisted Jacobi fields on fiber bundles.

\section{Preliminaries}\label{s-pre}

\subsection{Moment map and symplectic reduction}

Let $(M,\omega,J)$ be a K\"ahler manifold with compatible Riemannian metric
\[h(\cdot, \cdot)=\om(\cdot, J\cdot).\] 
Let $G$ be a compact connected Lie group with Lie algebra $\mathfrak g$, equipped with a bi-invariant metric that identifies $\mathfrak g \simeq \mathfrak g^*$. Suppose that $G$ acts on $M$ preserving the symplectic form $\omega$, the complex structure $J$ and the metric $h$.

Suppose the action is Hamiltonian, so that there exists an equivariant moment map
\[
\mu: M \to \mathfrak g^*
\]
satisfying
\[
d\langle \mu,\xi\rangle = \iota_{X_\xi}\omega, \qquad \forall \xi \in \mathfrak g,
\]
where $X_\xi$ denotes the infinitesimal vector field generated by $\xi$ on $M$. Using the identification $\mathfrak g \simeq \mathfrak g^*$, we regard $\mu: M \to \mathfrak g$, and equivariance takes the form
\begin{equation}\label{e:moment}
    \mu(g\cdot x)=\mathrm{Ad}_g\,\mu(x).
\end{equation}

For a central element $c \in \mathfrak g$, suppose $G$ acts freely and properly on $\mu^{-1}(c)$. Then the Marsden--Weinstein symplectic quotient is defined by
\[
M //_{c} G := \mu^{-1}(c)/G,
\]
and it inherits a natural reduced symplectic structure.

The following version of Noether's principle is standard; see for example \cite{OR04}.
\begin{thm}\label{t:Noether}
 Let $(M,\om)$ be a symplectic manifold which supports a Hamiltonian symplectic action of $G$, with moment map $\mu$. If a Hamiltonian function $H$ is $G$-invariant, then the moment map is conserved along the Hamiltonian flow.
\end{thm}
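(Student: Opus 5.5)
The plan is to differentiate the moment map along the flow and use $G$-invariance of $H$ to show the derivative vanishes. I fix $\xi\in\mathfrak g$ and consider the component $\mu^\xi:=\langle\mu,\xi\rangle$, a smooth function on $M$ with $d\mu^\xi=\iota_{X_\xi}\omega$. Let $x(t)$ be an integral curve of the Hamiltonian vector field $X_H$, defined by $\iota_{X_H}\omega=dH$; the sign convention does not matter because both relations are linear. It suffices to show $\frac{d}{dt}\mu^\xi(x(t))=0$ for every $\xi$. Since $\mathfrak g\simeq\mathfrak g^*$ via the bi-invariant metric, this gives $\partial_t\mu(x(t))=0$.

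The computation has three steps. First, by the chain rule,
\[
\frac{d}{dt}\mu^\xi(x(t))=d\mu^\xi(X_H)=\omega(X_\xi,X_H).
\]
Second, by antisymmetry of $\omega$ and the definition of $X_H$,
\[
\omega(X_\xi,X_H)=-\omega(X_H,X_\xi)=-dH(X_\xi).
\]
Third, $G$-invariance of $H$ means $H(\exp(s\xi)\cdot x)=H(x)$ for all $s$ and $x$. Differentiating at $s=0$ gives $dH(X_\xi)=0$. Combining the three steps, $\frac{d}{dt}\mu^\xi(x(t))=0$ for all $\xi$, so $\mu$ is constant along the flow.

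There is no real obstacle in this finite-dimensional statement; the only care needed is consistency of sign conventions, and the argument is insensitive to them. For the application in this paper, namely \eqref{e:conservation} on the infinite-dimensional space $\mathscr A\times\mathscr S$ with gauge group $\mathscr G$, the same formal computation applies. One pairs $F_{A,\phi}$ with a time-independent infinitesimal gauge transformation $\xi\in\Omega^0(\operatorname{ad}\mathcal P)$ and uses gauge invariance of $\mathcal E$. Making this rigorous needs enough regularity for $\partial_t(A,\phi)$ and for the variational identity $d\mathcal E(X_\xi)=0$ to hold after integration by parts on closed $\Sigma$. Since $\xi$ ranges over a dense set, one then concludes $\partial_tF_{A,\phi}=0$ pointwise.
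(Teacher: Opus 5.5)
Your argument is correct. The identity
\[
\frac{d}{dt}\langle\mu,\xi\rangle(x(t))=\omega(X_\xi,X_H)=-dH(X_\xi)=0,
\]
with the last equality obtained by differentiating $H(\exp(s\xi)\cdot x)=H(x)$ at $s=0$, is exactly the standard textbook proof. The paper does not prove this theorem itself. It quotes it as standard from the literature (Ortega--Ratiu) and then applies it purely formally to the gauge-theoretic setting. Your remark about pairing $F_{A,\phi}$ with a fixed $\xi\in\Omega^0(\operatorname{ad}\mathcal P)$, and about the regularity needed there, is therefore more careful than what the paper itself provides.
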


By Noether's principle, a $G$-invariant Hamiltonian flow on $M$ descends to a Hamiltonian flow on the symplectic quotient $M //_{c} G$. In physical terms, the reduced flow describes the essential dynamics modulo symmetry.

Next, we recall some basic properties of the moment map. Differentiating the identity \eqref{e:moment} along the one-parameter subgroup $g(t)=\exp(t\xi)$ yields
\begin{equation}\label{eq:momentum}
	\frac{d}{dt}\mu(\exp(t\xi)\cdot x)\big|_{t=0}
	= d\mu(x)\cdot X_\xi(x)
	= [\xi,\mu(x)].
\end{equation}
For $\xi \in \mathfrak g$, we define $\mu_\xi := \langle \mu,\xi\rangle$. Then
\[
d\mu_\xi = \iota_{X_\xi}\omega = \omega(X_\xi,\cdot) = h(JX_\xi,\cdot),
\]
hence
\begin{equation}\label{eq:momentum2}
	\n\mu_\xi = JX_{\xi}.
\end{equation}

Because the $G$-action preserves the metric $h$, $X_\xi$ is a Killing vector field for every $\xi\in\mathfrak g$; equivalently, $\n X_\xi$ is skew-symmetric. Moreover, because the $G$-action preserves the complex structure $J$, the infinitesimal vector field $X_\xi$ is holomorphic, i.e.
\[ J\circ \n X_\xi = \n X_\xi \circ J. \]
It follows that, for all $Y,Z\in\mathfrak X(M)$,
\begin{align*}
 \n d\mu_\xi(Y,Z)
 &=h(J\n_YX_\xi,Z)=h(\n_{JY}X_\xi,Z)\\
 &=h(J\n_{JY}X_\xi,JZ)=\n d\mu_\xi(JY,JZ).
\end{align*}
Thus $\nabla d\mu$ is symmetric and $J$-invariant. Consequently, the complex extension of $\n d\mu$ on $TM^{\C}$ vanishes on both $T^{(1,0)}M$ and $T^{(0,1)}M$ and defines a complex bilinear map
\begin{equation}\label{eq:momentum4}
	\nabla d\mu(Y,Z)
	=
	\nabla d\mu(Y',Z'') + \nabla d\mu(Y'',Z'),
\end{equation}
where $Y',Z' \in T^{1,0}M$ and $Y'',Z'' \in T^{0,1}M$.

\subsection{Covariant derivatives on bundles}

Let $(\P,\pi)$ be a principal $G$-bundle over a Riemannian manifold $(\Si, g)$. Let $ad\P:=\P\times_{ad}\g$ be the adjoint bundle of $\P$. The space $\A$ of connections is an affine space 
\[ \A = A_{ref}+\Om^1(ad\P),\]
where $A_{ref}$ is a reference connection and $\Om^k(ad\P)$ denotes the space of $ad\P$-valued $k$-forms. Each connection $A$ induces a covariant derivative $\n_A$ and a covariant exterior derivative $D_A$ on $\P$. The curvature of $A$ is $F_A:=D^2_A\in \Om^2(ad\P)$. 

Let $\G=\operatorname{Aut}\P$ be the gauge group. For $S\in \G$, we use the pullback convention
\[
 S^*A=S^{-1}dS+S^{-1}AS,\qquad S^*\phi=S^{-1}\!\cdot\phi.
\]
If $S(t)=\exp(t\xi)$, with $\xi\in\Gamma(ad\P)$, then the infinitesimal actions on a connection and its curvature are
\begin{equation}\label{e:A-action}
 \xi\cdot A := \left.\frac d{dt}\right|_{t=0}S(t)^*A=D_A\xi,
\end{equation}
and
\begin{equation}\label{e:F-action}
 \xi\cdot F_A := \left.\frac d{dt}\right|_{t=0}F_{S(t)^*A}=[F_A,\xi].
\end{equation}
%The corresponding infinitesimal action on a section is $-\xi\cdot\phi$. These conventions will also be used for all time-dependent gauge transformations below.

Let $M$ be a K\"ahler manifold equipped with a Hamiltonian $G$-action and moment map $\mu$. The associated bundle is $(\mathcal F=\mathcal P\times_GM,\pi_{\F})$. By equivariance, $\mu$ extends naturally to $\mathcal F$ and continues to satisfy \eqref{eq:momentum}, \eqref{eq:momentum2}, and \eqref{eq:momentum4}. A connection $A\in\A$ induces covariant derivative and exterior derivative operators on $\mathcal F$ as follows.

Each connection $A$ induces the splitting
\[T\F=T\F^v\oplus T\F^A,\]
where $T\F^v$ is the vertical subbundle and $T\F^A$ is the horizontal distribution. For any section $\phi\in \Ga(\F)$, the covariant exterior derivative induced by $A$ is defined by
\[D_A\phi=\pi_A\circ d\phi\in\Om^1(\phi^*T\F^v),\]
where $\pi_A: T\F\to T\F^v$ is the projection. In a local trivialization $\F_{U} = U\times M$, we can write
\[ D_A|_U = d + A_i\,dx^i, \]
where $A_i:U \to \g$. We may identify a section $\phi|_U$ with a map $u:U\to M$, in which case
\[D_A\phi|_{U}=du+A_i\cdot u\otimes dx^i = du + X_{A_i}(u)\otimes dx^i,\]
where $X_{A_i}$ is the infinitesimal vector field generated by $A_i$. 
Moreover, for a section $Y\in \Ga(\phi^*T\F^v)$, the covariant derivative induced by $A$ is defined (locally) by
\[\n_A Y=\n Y +A_i\cdot Y\otimes dx^i=\n Y +\n_{Y}X_{A_i}\otimes dx^i,\]
where $\n$ denotes the pullback of the Levi-Civita connection on $M$. In this way, the covariant derivative $\n_A$ extends naturally to all tensor spaces $\Ga((T^*\Si)^{\otimes k}\otimes\phi^*T\F^v)$ for all $k\in \mathbb{N}$.

\subsection{Sobolev spaces}\label{s-Sobolev}
\subsubsection{Sobolev spaces on vector bundles}
Let $E$ be a vector bundle over a complete Riemannian manifold $(\Si,g)$, equipped with a bundle metric $g_E$. Let $A$ be a connection on $E$ compatible with the metric $g_E$. We use $\n_A$ to denote the covariant derivative induced by the connection $A$.

For any section $f\in \Ga((T^*\Si)^{l}\otimes E)$ with $l\in \mathbb{N}$, we denote by $\n^k_Af$ the $k$th covariant derivative of $f$ induced by the connection $A$. Let $H^{k,p}_A(\Si,E)$ be the completion of $\Ga((T^*\Si)^l\otimes E)$ with respect to the intrinsic norm
\[\norm{f}_{H^{k,p}_A}:=\left(\sum_{j=0}^{k}\int_{\Si}|\n^j_Af|^p\,dx\right)^{\frac{1}{p}}.\]
where $dx$ denotes the volume form on $\Si$ induced by the Riemannian metric $g$. Let $C^k(\Si,E)$ be the space of $C^k$ sections. We use the following Sobolev embedding theorem; see \cite{DK}.
\begin{thm}\label{Sob-embd}
Let $\Si$ be a closed $m$-dimensional Riemannian manifold and let $(E,g_E)$ be a vector bundle over $\Si$. Let $A$ be a smooth connection on $E$ compatible with the metric $g_E$. Then the following statements hold.
\begin{itemize}
	\item[$(1)$] For any $k$, the embedding of $H^{k+1,2}_A(\Si, E)$ into $H^{k,2}(\Si, E)$ is compact;
	\item[$(2)$] The embedding of $H^{k,2}_A(\Si, E)$ into $C^r(\Si, E)$ is compact, provided $k-\frac{m}{2}>r$.	
\end{itemize}
\end{thm}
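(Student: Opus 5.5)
The plan is to reduce both statements to the classical Rellich--Kondrachov and Sobolev--Morrey theorems on bounded Euclidean domains. To do this we localize with a finite atlas and show that, because $A$ is smooth and $\Si$ is compact, the intrinsic norm $\norm{\cdot}_{H^{k,p}_A}$ is equivalent to a sum of ordinary componentwise Sobolev norms in local trivializations.

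\textbf{Localization.} First, we fix a finite cover $\{U_\alpha\}_{\alpha=1}^N$ of $\Si$ by coordinate balls over which $E$ and $T^*\Si$ are trivialized, with $E|_{U_\alpha}\cong U_\alpha\times\R^r$ chosen orthonormal for $g_E$. We also fix a subordinate partition of unity $\{\rho_\alpha\}$, and open sets $V_\alpha\Subset U_\alpha$ containing $\operatorname{supp}\rho_\alpha$. In such a trivialization, $\n_A = d + A_\alpha$ plus Christoffel terms acting on the $(T^*\Si)^{l}$ factor, where $A_\alpha$ is a smooth matrix-valued $1$-form.

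\textbf{Norm equivalence.} Expanding $\n^j_A f$ by induction on $j$, we get $\n^j_A f = \p^j f + \sum_{i<j} c_{j,i}\,\p^i f$, where the coefficients $c_{j,i}$ are polynomials in derivatives of $A_\alpha$, the metric $g$ and the Christoffel symbols; all of these are bounded on $\overline{V_\alpha}$. Conversely, $\p^j f$ is a combination of $\n^i_A f$ with $i\le j$ and bounded coefficients. Combining this with the Leibniz rule for $\rho_\alpha f$ and with the bounds on the derivatives of $\rho_\alpha$, we obtain constants $C_1, C_2>0$ such that
\[
C_1\norm{f}_{H^{k,p}_A}\le \sum_\alpha \norm{\rho_\alpha f}_{W^{k,p}(V_\alpha;\R^{r'})}\le C_2\norm{f}_{H^{k,p}_A},
\]
where $r'$ is the rank of $(T^*\Si)^{l}\otimes E$. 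In particular, $H^{k,p}_A$ is independent of the smooth connection up to equivalent norms; this also explains the unsubscripted notation $H^{k,2}$ in statement (1). Likewise, the $C^r$ norm is equivalent to $\sum_\alpha\norm{\rho_\alpha f}_{C^r(\overline{V_\alpha})}$.

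\textbf{Compactness.} For (1), let $\{f_n\}$ be bounded in $H^{k+1,2}_A$. Then each localized sequence $\{\rho_\alpha f_n\}$ is bounded in $W^{k+1,2}_0(V_\alpha)$. By the classical Rellich theorem, applied to each derivative of order $\le k$, it has a subsequence converging in $W^{k,2}(V_\alpha)$. A diagonal argument over the finitely many indices $\alpha$ produces a single subsequence along which all $\rho_\alpha f_n$ converge. By the norm equivalence, $f_n=\sum_\alpha\rho_\alpha f_n$ is then Cauchy in $H^{k,2}$.

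For (2), suppose $k-\frac m2>r$. The Euclidean Morrey--Sobolev embedding gives $W^{k,2}_0(V_\alpha)\hookrightarrow C^{r,\gamma}(\overline{V_\alpha})$ for some $\gamma>0$; when $k-\frac m2-r\ge 1$ we lower the exponent. Bounded subsets of $C^{r,\gamma}(\overline{V_\alpha})$ are precompact in $C^r$ by Arzel\`a--Ascoli. The same diagonal and gluing argument then completes the proof.

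The only genuinely non-classical step is the norm equivalence. It is routine, but it is where smoothness of $A$ and compactness of $\Si$ are used: they guarantee uniform bounds on all the lower-order coefficients $c_{j,i}$. I expect this bookkeeping to be the main, though mild, obstacle. The key point is to organize the induction so that commuting $\n_A$ past the cutoffs $\rho_\alpha$ only produces lower-order terms.
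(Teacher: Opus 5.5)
Your proof is correct. The paper does not prove this statement; it quotes it from Donaldson--Kronheimer, and your reduction is exactly the standard argument behind that citation: a finite trivializing atlas and partition of unity give equivalence of the connection-defined norm $H^{k,2}_A$ with patched Euclidean Sobolev norms, and then Rellich, and Morrey with Arzel\`a--Ascoli, are applied chart by chart.
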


For $k\in\mathbb N^+$, define
\begin{align*}
E^k_{2,2}(\Si,E):={}&\bigl\{f\in L^2((0,T);H^{k+1,2}_A):
 \partial_tf\in L^2((0,T);H^{k-1,2}_A(\Si,E))\bigr\},\\
E^k_{\infty,2}(\Si,E):={}&\bigl\{f\in L^\infty((0,T);H^{k,2}_A(\Si,E)):
 \partial_tf\in L^2((0,T);L^2(\Si,E))\bigr\}.
\end{align*}
We also use the following embedding result; see Theorems II.5.14 and II.5.16 in \cite{BF}.
\begin{lemma}\label{C0-em}
Let $k\in \mathbb{N}^+$. Then the following statements hold.
\begin{itemize}
	\item[$(1)$] The space $E^k_{2,2}(\Si,E)$ is continuously embedded in $C^0([0,T];H^{k,2}_{A}(\Si,E))$;
	\item[$(2)$] The space $E^k_{\infty,2}(\Si,E)$ is compactly embedded in $C^0([0,T];H^{k-1,q}_{A}(\Si,E))$ for any $0<q<\frac{2m}{m-2}$.
\end{itemize}

\end{lemma}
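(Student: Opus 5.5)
The statement just above is Lemma~\ref{C0-em}, which the paper cites from \cite{BF}. I will treat it as a standard parabolic-type embedding and reduce it to the scalar vector-valued theory on a closed manifold.

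\textbf{Reduction to a fixed Hilbert triple.} Since $\Si$ is closed and $A$ is smooth and metric compatible, the intrinsic norms $\norm{\cdot}_{H^{k,2}_A}$ are equivalent to the usual Sobolev norms defined with a partition of unity and local trivializations. This uses only the fact that the difference $\n_A-\n_{A'}$ for two smooth connections is a smooth zeroth-order operator. Hence it suffices to prove both statements for the spaces $H^k:=H^{k,2}_A(\Si,E)$, which form a Hilbert scale. For instance, we may take the norms defined through powers of $1+\n_A^*\n_A$, which are elliptic and self-adjoint.

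\textbf{Proof of (1).} I will use the Lions--Magenes interpolation argument. The triple $H^{k+1}\subset H^k\subset H^{k-1}$ has $H^k=[H^{k+1},H^{k-1}]_{1/2}$, and $H^{k-1}$ is identified with the dual of $H^{k+1}$ through the $H^k$ pairing. Let $f\in E^k_{2,2}$.
\begin{enumerate}
\item Mollify $f$ in time after extending it to a slightly larger interval by reflection. This produces smooth approximations converging in $E^k_{2,2}$.
\item For smooth $f$, set $\Lambda:=(1+\n_A^*\n_A)^{1/2}$ and compute
\[
\frac{d}{dt}\norm{f}_{H^k}^2=2\langle \Lambda^{k+1}f,\Lambda^{k-1}\p_tf\rangle_{L^2}.
\]
\item Integrating in time and averaging over the starting point gives
\[
\sup_t\norm{f}_{H^k}^2\le C\bigl(\norm{f}_{L^2H^{k+1}}^2+\norm{\p_tf}_{L^2H^{k-1}}^2\bigr).
\]
\item By density, this bound yields continuity into $H^k$ together with the estimate, which is the claimed continuous embedding.
\end{enumerate}

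\textbf{Proof of (2).} I will apply the Aubin--Lions--Simon lemma in its $L^\infty$ version, which gives compactness in $C^0$. Take the triple $X_0=H^k$, $X=H^{k-1,q}_A$, $X_1=L^2$.
\begin{itemize}
\item The embedding $H^k\hookrightarrow H^{k-1,q}$ is compact for $q<2m/(m-2)$. This follows from Rellich--Kondrachov applied to $\n_A^{k-1}f\in H^1$, using compact Sobolev embedding in local charts, which is the same mechanism as Theorem~\ref{Sob-embd}.
\item $H^{k-1,q}\hookrightarrow L^2$ continuously, at least for $q\ge2$. For $q<2$, the estimate is weaker, but the conclusion follows from the case $q=2$ since $\Si$ has finite volume.
\item Simon's theorem then says that a set bounded in $L^\infty(0,T;X_0)$ with $\p_tf$ bounded in $L^r(0,T;X_1)$ for some $r>1$ (here $r=2$) is relatively compact in $C^0([0,T];X)$.
\end{itemize}
Concretely, this is Arzelà--Ascoli combined with Ehrling's inequality
\[
\norm{f}_X\le\eta\norm{f}_{X_0}+C_\eta\norm{f}_{X_1}.
\]
The required equicontinuity in $X_1$ comes from the H\"older bound $\norm{f(t)-f(s)}_{L^2}\le|t-s|^{1/2}\norm{\p_tf}_{L^2L^2}$. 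Pointwise relative compactness comes from boundedness in $X_0$ and the compactness of $X_0\hookrightarrow X$.

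\textbf{Main obstacle.} The delicate points are bookkeeping rather than conceptual. The first is the identification of the dual and interpolation spaces for the bundle-valued intrinsic norms in (1). The second is ensuring $f$ has a continuous representative in time before applying Ascoli in (2). The first is handled by the equivalence of norms with the elliptic-operator scale. For the second, the $H^{1}(0,T;L^2)$ bound already gives continuity into $L^2$, and the Ehrling interpolation then upgrades it to continuity into $X$.
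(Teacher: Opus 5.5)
Your proposal is correct and is essentially the paper's route. The paper gives no argument of its own and simply cites Boyer--Fabrie for the standard Lions--Magenes continuity lemma (for the triple $H^{k+1,2}_A\subset H^{k,2}_A\subset H^{k-1,2}_A$) and the Aubin--Lions--Simon compactness lemma in $C^0$, which is what you reproduce. The one detail worth stating explicitly is that the continuous representative satisfies $\|f(t)\|_{H^{k,2}_A}\le\|f\|_{L^\infty H^{k,2}_A}$ for every $t$, not just a.e.; this follows from weak lower semicontinuity and is used both in your Ehrling equicontinuity estimate and in the pointwise compactness step of Ascoli.
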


Next, we recall the Sobolev interpolation inequalities for tensors on the vector bundle $E$. The following result is proved in \cite{DW01}.
\begin{thm}\label{Sob}
Let $\Si$ be a closed $m$-dimensional Riemannian manifold or $\mathbb R^m$, and let $(E,g_E)$ be a vector bundle over $\Si$. Let $A$ be a smooth connection on $E$ compatible with the metric $g_E$. Let $q,r$ be real numbers with $1\leq q,r\leq \infty$, and $j,n$ be integers with $0\leq j<n$. Then there exists a constant $C$ depending only on $q,r,j,n,m$ and the geometry of $\Si$ such that for any $f\in\Ga((T^*\Si)^{l}\otimes E)$ with compact support in $\Si$, we have
\begin{equation}\label{G-N-ineq}
\norm{\n^j_Af}_{L^p}\leq C\norm{f}^a_{H^{n,r}_A}\norm{f}^{1-a}_{L^q},
\end{equation}
where
\[\frac{1}{p}=\frac{j}{m}+a(\frac{1}{r}-\frac{n}{m})+(1-a)\frac{1}{q},\]
for all $a\in [\frac{j}{n},1]$. When $r=\frac{m}{n-j}\neq 1$, the inequality \eqref{G-N-ineq} is not valid for $a=1$.
\end{thm}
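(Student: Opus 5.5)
The plan is to reduce the statement to two basic ingredients that hold for sections of an arbitrary metric vector bundle with a compatible connection, and then combine them by the standard iteration used for scalar Gagliardo--Nirenberg inequalities. I would follow \cite{DW01} and Hamilton's interpolation argument.

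\textbf{Ingredient 1: scalar reduction via Kato.} Since $A$ is compatible with $g_E$, Kato's inequality
\[
\bigl|d|f|\bigr|\le|\n_Af|
\]
holds almost everywhere for any $f\in\Ga((T^*\Si)^l\otimes E)$. Applying the scalar Sobolev and Gagliardo--Nirenberg inequalities on $\Si$ to the compactly supported function $|f|$ then gives the first-order case $j=0$, $n=1$:
\[
\norm{f}_{L^p}\le C\norm{f}_{H^{1,r}_A}^a\norm{f}_{L^q}^{1-a}.
\]
The constants depend only on the geometry of $\Si$; on $\R^m$ or a closed manifold the lower-order term is absorbed into the full $H^{1,r}_A$ norm. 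Iterating this with $f$ replaced by $\n^{i}_Af$ yields the ``Sobolev part'' of the inequality, i.e.\ the case $a=1$. This is where the exceptional case $r=m/(n-j)$ enters: the endpoint embedding into $L^\infty$ fails, so $a=1$ must be excluded there.

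\textbf{Ingredient 2: the basic integration-by-parts interpolation.} The second ingredient is
\[
\norm{\n_Af}_{L^{p}}^2\le C\norm{f}_{L^{q}}\norm{\n^2_Af}_{L^{r}},\qquad \frac{2}{p}=\frac1q+\frac1r,
\]
up to lower-order terms on closed manifolds. To prove it, write
\[
\int|\n_Af|^{p}=\int\<\n_Af,|\n_Af|^{p-2}\n_Af\>
\]
and integrate by parts using that $\n_A$ is metric compatible. This produces
\[
-\int\<f,|\n_Af|^{p-2}\n^*_A\n_Af\>-\int\<f,\n_Af\otimes\n|\n_Af|^{p-2}\>.
\]
Both terms are bounded pointwise by $C|f|\,|\n_Af|^{p-2}|\n_A^2f|$. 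H\"older's inequality then gives the claim after dividing by $\norm{\n_Af}_{L^p}^{p-2}$. The only bundle-specific issue is that no commutation of covariant derivatives is needed at this stage, so the curvature of $A$ does not appear.

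\textbf{Combining.} Feeding Ingredient 2 into itself in the usual Hamilton/Moser fashion gives
\[
\norm{\n^j_Af}_{L^{p}}\le C\norm{\n^n_Af}_{L^r}^{j/n}\norm{f}_{L^q}^{1-j/n}
\]
for the scale-invariant exponents. This is the endpoint $a=j/n$ of the statement. The general $a\in[j/n,1]$ then follows by interpolating this endpoint with the Sobolev embedding $a=1$ from Ingredient 1, using H\"older's inequality in the exponent $p$.

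\textbf{Main obstacle.} I expect the main obstacle to be the non-scale-invariant bookkeeping. When one passes to higher derivatives (applying the argument to $\n^{i}_Af$) and on closed manifolds, one must ensure that commutators $[\n_A,\n_A]$ produce terms involving $F_A$ and the Riemann curvature of $\Si$. These terms are of strictly lower order and must be controlled by $\norm{f}_{H^{n,r}_A}$; this is why the full norm rather than the top seminorm appears on the right-hand side. The constant must also remain independent of $A$ beyond what is allowed. I would handle this by induction on $n$, absorbing every lower-order term with the inductive hypothesis together with Young's inequality.
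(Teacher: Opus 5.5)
The paper gives no proof of Theorem~\ref{Sob}; it only cites \cite{DW01}, so your proposal has to stand on its own. Your two ingredients are sound within their ranges. Kato's inequality gives the first-order case for every admissible $a$, including $p=\infty$ when $r>m$. Hamilton's integration by parts, iterated on $\n^i_Af$, gives the endpoint $a=j/n$. Both have constants independent of $A$.

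\textbf{The gap is in the combining step.} H\"older interpolation in $p$ between $a=j/n$ and $a=1$ needs the $a=1$ estimate, and that exists only when $r(n-j)<m$. If $r(n-j)\ge m$, the $a=1$ case is either the excluded critical one or has $\frac1p=\frac1r-\frac{n-j}{m}<0$. Moreover, H\"older interpolation can never produce an $L^\infty$ bound unless one endpoint is already $L^\infty$.

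This is exactly the regime the paper uses: with $m=r=2$ one always has $r(n-j)\ge m$. Your scheme reaches \eqref{ineq-1} (Ingredient 1) and \eqref{ineq-3} (the endpoint), but not the following:
\begin{itemize}
\item \eqref{ineq-2}, where $j=0$, $n=2$, $a=\frac12$, $p=\infty$: the $a=0$ endpoint is trivial and the $a=1$ endpoint is meaningless.
\item The bound $\norm{\n_Af}_{L^4}\le C\norm{f}^{3/4}_{H^{2,2}_A}\norm{f}^{1/4}_{L^2}$, where $j=1$, $n=2$, $a=\frac34$: it sits over the excluded endpoint $r=m/(n-j)$, yet it is applied with $f=\n_A\phi$ throughout Section~\ref{s-perturbed-YMHSF}.
\end{itemize}

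\textbf{The missing idea is Nirenberg's combination.} Do not discard the range $a<1$ of the first-order inequality. Apply it to $g=\n^j_Af$:
\[
\norm{\n^j_Af}_{L^p}\le C\norm{\n^j_Af}^{b}_{H^{1,s}_A}\norm{\n^j_Af}^{1-b}_{L^t},\qquad b\in[0,1).
\]
Then control the first factor by the statement for $(j+1,n)$ (induction on $n-j$) and the second by the endpoint $a=j/n$. This produces every exponent $a=b\,a'+(1-b)\frac jn$.

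For instance, \eqref{ineq-1} applied to $\n_Af$, together with $\norm{\n_Af}_{L^2}\le C\norm{f}^{1/2}_{H^{2,2}_A}\norm{f}^{1/2}_{L^2}$, gives the $L^4$ bound above. Next, Kato plus the scalar inequality (using $4>m$) gives $\norm{f}_{L^\infty}\le C\norm{f}^{2/3}_{H^{1,4}_A}\norm{f}^{1/3}_{L^2}$. Combined with the $L^4$ bounds, this gives \eqref{ineq-2}.

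\textbf{Two smaller points.}

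First, your proof of Ingredient~2 needs every exponent in the iteration to lie in $[2,\infty)$. For $p<2$ the weight $|\n_Af|^{p-2}$ would have to be placed in $L^{p/(p-2)}$ with a negative exponent. For large $p$ the constant obtained grows linearly in $p$, so it gives nothing at $p=\infty$. Hence cases in the stated range such as $q=r=1$ or $q=r=\infty$ are not covered. This is harmless for the paper, which only uses $q=r=2$, but you should either restrict the claim or argue differently there.

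Second, the obstacle you anticipate does not occur. By definition $\n^{i+1}_Af=\n_A(\n^i_Af)$, and both ingredients are applied to $\n^i_Af$ as a section of $(T^*\Si)^{l+i}\otimes E$ with the induced metric-compatible connection. No covariant derivatives are ever commuted, so neither $F_A$ nor the curvature of $\Si$ enters. The real difficulty is the combination step above.
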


We now specialize to the case in which $\Si$ is a Riemann surface. The following results are direct consequences of Theorem~\ref{Sob}.
\begin{cor}\label{Sob1}
Let $\Si$ be a closed Riemann surface, $E$ be a vector bundle over $\Si$. Then the following statements hold.
\begin{itemize}
\item[$(1)$] Let $2\leq p<\infty$. For any $f\in H^{1,2}(\Si,E)$, we have
\begin{align}
\norm{f}_{L^p}\leq C\norm{f}^{1-\frac{2}{p}}_{H^{1,2}_A}\norm{f}^{\frac{2}{p}}_{L^2}.\label{ineq-1}
\end{align}
\item[$(2)$] For any $f\in H^{2,2}(\Si,E)$, we have
\begin{align}
\norm{f}_{L^\infty}\leq C\norm{f}^{\frac{1}{2}}_{H^{2,2}_A}\norm{f}^{\frac{1}{2}}_{L^2}.\label{ineq-2}
\end{align}
\item[$(3)$] Let $0\leq j<n$. For any $f\in H^{n,2}(\Si,E)$, we have
\begin{align}
\norm{\n^j_Af}_{L^2}\leq C\norm{f}^{\frac{j}{n}}_{H^{n,2}_A}\norm{f}^{1-\frac{j}{n}}_{L^2}.\label{ineq-3}
\end{align}
\end{itemize}
\end{cor}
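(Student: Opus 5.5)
Each of the three inequalities follows by specializing Theorem \ref{Sob} to $m=2$ and choosing the parameters $j,n,r,q,a$ appropriately. After that, one checks the admissibility condition on $a$ and the excluded endpoint case. Since $\Si$ is closed, compact support is automatic. Density of smooth sections in $H^{n,2}_A$ then extends each inequality from smooth $f$ to the stated Sobolev classes, because both sides are continuous in the relevant norms.

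For (1), take $j=0$, $n=1$, $r=q=2$. The exponent relation reads
\[\tfrac1p = a\big(\tfrac12-\tfrac12\big)+(1-a)\tfrac12 = \tfrac{1-a}{2},\]
so $a=1-\frac2p$. This lies in $[0,1]=[\frac jn,1]$ for $2\le p<\infty$. The forbidden case $r=\frac{m}{n-j}=2\neq1$ with $a=1$ would correspond to $p=\infty$, which is excluded. Theorem \ref{Sob} then gives \eqref{ineq-1} directly.

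For (2), take $j=0$, $n=2$, $r=q=2$, $p=\infty$. The relation gives
\[0 = a\big(\tfrac12-1\big)+\tfrac{1-a}{2} = \tfrac12-a,\]
so $a=\frac12\in[0,1]$. The endpoint exception concerns $r=\frac{m}{n-j}=1$, which is not our case, so no restriction applies and \eqref{ineq-2} follows.

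For (3), take $r=q=p=2$ and general $0\le j<n$. The relation becomes
\[\tfrac12=\tfrac j2+a\big(\tfrac12-\tfrac n2\big)+\tfrac{1-a}{2},\]
that is, $0=j-an$, so $a=\frac jn$. This is exactly the left endpoint of the allowed range. The exception only concerns $a=1$, which is impossible since $j<n$, so \eqref{ineq-3} follows.

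The only point needing care is confirming that these parameter choices avoid the excluded endpoint in Theorem \ref{Sob}. The checks above settle this, so no step is a real obstacle.
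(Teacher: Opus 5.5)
Your proposal is correct and follows the paper's route exactly: the paper records this corollary as a direct specialization of Theorem \ref{Sob} to $m=2$ without writing out details. Your parameter choices ($a=1-\frac{2}{p}$, $a=\frac12$, and $a=\frac{j}{n}$), the checks that each choice avoids the excluded endpoint, and the density extension supply precisely the verification the paper omits.
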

\begin{cor}\label{Sob2}
Let $\Si$ be a closed Riemann surface, $E$ be a vector bundle over $\Si$. Let $0\leq j<n$. For any $f\in H^{n,2}_A(\Si,E)$, we have
\begin{equation}\label{ineq-4}
\norm{f}_{H^{n-1,2}_A}\leq C(\norm{f}_{L^2}+\norm{\n^n_A f}_{L^2}).	
\end{equation}
\end{cor}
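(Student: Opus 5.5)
Corollary~\ref{Sob2} follows from the interpolation inequality \eqref{ineq-3} of Corollary~\ref{Sob1} together with Young's inequality; no new analytic input is needed. Set $X:=\norm{f}_{L^2}+\norm{\n^n_Af}_{L^2}$. Since $\norm{f}^2_{H^{n,2}_A}=\sum_{i=0}^n\norm{\n^i_Af}^2_{L^2}$, it suffices to bound $\norm{\n^j_Af}_{L^2}\le C X$ for each intermediate order $1\le j\le n-1$. The orders $j=0$ and $j=n$ are already contained in $X$. Controlling all orders up to $n$ in this way gives the full $H^{n,2}_A$ norm, which is in fact stronger than the stated $H^{n-1,2}_A$ bound.

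For fixed $1\le j\le n-1$, I apply \eqref{ineq-3}:
\[
\norm{\n^j_Af}_{L^2}\le C\norm{f}^{j/n}_{H^{n,2}_A}\norm{f}^{1-j/n}_{L^2}.
\]
By Young's inequality with exponents $n/j$ and $n/(n-j)$, for every $\delta>0$,
\[
\norm{\n^j_Af}_{L^2}\le \delta\norm{f}_{H^{n,2}_A}+C_\delta\norm{f}_{L^2}.
\]
Then I bound $\norm{f}_{H^{n,2}_A}\le \sum_{i=0}^n\norm{\n^i_Af}_{L^2}$ and sum the previous inequality over $1\le j\le n-1$. This gives
\[
S:=\sum_{j=1}^{n-1}\norm{\n^j_Af}_{L^2}\le (n-1)\delta\bigl(S+X\bigr)+C_\delta\norm{f}_{L^2}.
\]
Choosing $\delta=\frac{1}{2(n-1)}$ lets me absorb $S$ into the left-hand side, and I obtain $S\le CX$. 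Consequently $\norm{f}_{H^{n-1,2}_A}\le\norm{f}_{H^{n,2}_A}\le CX$.

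The main point needing care is that the absorption step is legitimate. This requires $S<\infty$, which holds because $f\in H^{n,2}_A$ by hypothesis. It also requires that the constant in \eqref{ineq-3} depends only on $n,j$ and the geometry of $\Si$, and not on $f$; this is guaranteed by Theorem~\ref{Sob}. One could alternatively argue by induction on $n$ using only the case $n=2$, namely $\norm{\n_Af}^2_{L^2}\le\norm{f}_{L^2}\norm{\n^2_Af}_{L^2}$, obtained by integration by parts. However, commuting covariant derivatives in that approach produces curvature terms involving $F_A$, and these would then have to be controlled separately. For that reason I would rely directly on Corollary~\ref{Sob1}, which already includes such contributions in its constant.
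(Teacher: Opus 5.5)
Your argument is correct and is essentially the paper's proof. Both interpolate each intermediate derivative with \eqref{ineq-3}, use Young's inequality to get $\norm{\n^j_Af}_{L^2}\le\delta\norm{f}_{H^{n,2}_A}+C_\delta\norm{f}_{L^2}$, sum over $1\le j\le n-1$, and absorb for small $\delta$; your bookkeeping with $S$ and $X$ is, if anything, cleaner than the paper's. One correction to your closing aside: the inductive route would not produce $F_A$ terms, since $\norm{\n^j_Af}^2_{L^2}\le C\norm{\n^{j-1}_Af}_{L^2}\norm{\n^{j+1}_Af}_{L^2}$ follows from the $n=2$ case applied to $g=\n^{j-1}_Af$, which only uses $\int|\n_Ag|^2=-\int\langle g,\operatorname{tr}\n_A^2g\rangle$ and involves no commutation of covariant derivatives.
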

\begin{proof}
For any $0<j<n$, inequality \eqref{ineq-3} gives
\[\norm{\n^j_Af}_{L^2}\leq \de\norm{f}_{H^{n,2}_A} +C\de^{-1}\norm{f}_{L^2}.\]
This implies that
\[\norm{f}_{H_A^{n-1,2}}\leq n\de\norm{f}_{H^{n-1,2}_A}+C(1+\de^{-1})(\norm{f}_{L^2}+\norm{\n^n_A f}_{L^2}).\]
Choosing $\de$ so that $n\de<\frac{1}{2}$, proves \eqref{ineq-4}. 
\end{proof}

Corollary~\ref{Sob1} yields the following comparison estimates for Sobolev norms with respect to different connections.

\begin{lemma}\label{equiv-es-a}
Let $\Si$ be a closed Riemann surface. For any $k\geq 2$, there exists a constant $C$ such that for any connections $A_{ref}$ and $A$ on $E$, we have
	\begin{align}
		\norm{a}_{H^{k,2}_{A_{ref}}}\leq C\sum_{j=1}^{k+2}\norm{a}^{j}_{H^{k,2}_A},\label{equiv-es}
	\end{align} 
	where $a=A-A_{ref}$. 
\end{lemma}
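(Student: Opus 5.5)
We will relate the two covariant derivatives through the algebraic identity $\n_{A_{ref}} = \n_A - a\#$, where $a\#$ is the action of $a$ on tensors with values in $ad\P$ (or $E$) by the bracket or by the representation, so $|a\# f|\le C|a||f|$. Iterating this identity gives, for every $j\le k$,
\[
\n^j_{A_{ref}} a = \n^j_A a + \sum \n^{i_1}_A a \# \cdots \# \n^{i_s}_A a,
\]
where the sum runs over $2\le s\le j+1$ and $i_1+\dots+i_s = j-s+1$, and the coefficients are universal constants. We prove this expansion by induction on $j$: we apply $\n_{A_{ref}}=\n_A - a\#$ to the expansion at step $j$ and use the Leibniz rule for $\n_A$. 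Each application either raises one derivative index or adds one factor of $a$ while lowering the total derivative count by one. This bookkeeping gives the constraint $\sum_l (i_l+1) = j+1$.

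Next we estimate each term in $L^2$, using only $A$-norms and Corollary~\ref{Sob1} for the smooth-enough connection $A$. We use H\"older together with the $L^\infty$ bound \eqref{ineq-2} and the $L^p$ bound \eqref{ineq-1}, applied to $f = \n^{i_l}_A a$:
\begin{itemize}
\item If $i_l\le k-2$, then $\n^{i_l}_A a\in H^{2,2}_A\subset L^\infty$, with $\norm{\n^{i_l}_A a}_{L^\infty}\le C\norm{a}_{H^{k,2}_A}$.
\item If $i_l = k-1$, the factor lies in $H^{1,2}_A\subset L^p$ for all $p<\infty$, with norm bounded by $C\norm{a}_{H^{k,2}_A}$.
\item $i_l=k$ cannot occur when $s\ge2$, since $\sum_l i_l\le j-1\le k-1$.
\end{itemize}
Moreover, at most one index equals $k-1$: two such indices would need $2(k-1)\le j-s+1\le k-1$, which forces $k\le1$. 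So in each product we place at most one factor in $L^4$ (or $L^2$) and the others in $L^\infty$. This gives
\[
\norm{\n^{i_1}_A a \# \cdots \# \n^{i_s}_A a}_{L^2}\le C\norm{a}^s_{H^{k,2}_A},
\]
using the finite volume of $\Si$ when we pass from $L^4\cdot L^4$ to $L^2$. Here $k\ge2$ is used exactly to place the lower-order factors in $L^\infty$.

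Summing over $j\le k$ yields $\norm{a}_{H^{k,2}_{A_{ref}}}\le C\sum_{s=1}^{k+1}\norm{a}^s_{H^{k,2}_A}$, which is contained in the claimed bound \eqref{equiv-es}; the range up to $k+2$ leaves slack.

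The main obstacle is making the constants in Corollary~\ref{Sob1} independent of $A$. We plan to handle this with Kato's inequality $|\n|f||\le|\n_A f|$ for metric connections. It reduces \eqref{ineq-1} and \eqref{ineq-2} at low order to the scalar Gagliardo--Nirenberg inequality on $\Si$, whose constants depend only on $\Si$. For higher-order terms we iterate Kato's inequality on $\n^{i}_A a$, so that the constants never see $A$.

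If \eqref{ineq-2} is not available uniformly in $A$, we fall back on the following route, which only needs first derivatives and is therefore $A$-independent by Kato: use the 2D Sobolev embedding $H^{1,2}\subset L^p$ for $|\n^{i}_A a|$, and replace every $L^\infty$ bound by $L^p$ bounds with large $p$. This may produce the extra power of $\norm{a}$, which is presumably why the statement allows the sum to extend to $k+2$.
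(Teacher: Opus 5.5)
Your proposal is correct and follows essentially the same route as the paper: expand $\nabla^k_{A_{ref}}a$ via $\nabla_{A_{ref}}=\nabla_A-[a,\cdot\,]$ into $\nabla^k_A a+\nabla^{k-1}_A a\# a$ plus products of factors $\nabla^{i}_A a$ with $i\le k-2$, then bound these products with the $L^4$ and $L^\infty$ interpolation inequalities of Corollary~\ref{Sob1}. Your constraint $\sum_l(i_l+1)=j+1$ is exactly the paper's $i_1+\cdots+i_s+s=k+1$, your observation that the highest power actually needed is $k+1$ is accurate, and the $A$-independence of the Sobolev constants that you justify via Kato's inequality is what the paper takes for granted from Theorem~\ref{Sob}, whose constants depend only on the exponents and the geometry of $\Sigma$.
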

\begin{proof}
	For $k\geq 2$, a direct computation yields
	\begin{align*}
		\n^k_{A_{ref}}a=\n^k_Aa+\n^{k-1}_Aa\#a+\sum_{\substack{i_1+\cdots+i_s+s=k+1,\\ i_j\leq k-2}}\n^{i_1}_Aa\#\cdots \# \n^{i_s}_Aa.
	\end{align*}
	Consequently, inequalities \eqref{ineq-1}, \eqref{ineq-2}, and \eqref{ineq-4} give
	\begin{align*}
		\norm{\n^k_{A_{ref}}a}_{L^2}\leq& \norm{\n^k_Aa}_{L^2}+\norm{\n^{k-1}_Aa}_{L^4}\norm{a}_{L^4}\\
		&+\sum_{\substack{i_1+\cdots+i_s+s=k+1,\\
				i_j\leq k-2}}\norm{\n^{i_1}_Aa}_{L^\infty}\cdots\norm{\n^{i_s}_Aa}_{L^\infty} \\
		\leq& C\sum_{j=1}^{k+2}\norm{a}^{j}_{H^{k,2}_A}.
	\end{align*}
	This proves \eqref{equiv-es-a}.
\end{proof}

The same argument gives the following result.
\begin{lemma}\label{equiv-es-f}
Let $\Si$ be a closed Riemann surface. For any $k\geq2$, there exists a constant $C$ such that, for any section $f$ and any two connections $A_{ref}$ and $A$ on $E$, we have
	\begin{align}
		\norm{f}_{H^{k,2}_{A_{ref}}}\leq C\sum_{j=0}^{k+1}\norm{a}^{j}_{H^{k-1,2}_A}\norm{f}_{H^{k,2}_A}.\label{equiv-es1}
	\end{align} 
\end{lemma}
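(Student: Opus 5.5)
We argue exactly as in the proof of Lemma \ref{equiv-es-a}, now applied to a section $f$ rather than to $a$ itself. Write $a=A-A_{ref}$, so that $\n_{A_{ref}}=\n_A-a\#$ on sections of $(T^*\Si)^{l}\otimes E$, where $\#$ denotes a bilinear contraction with coefficients depending only on the metrics. We need a bound for $\norm{\n^j_{A_{ref}}f}_{L^2}$ for every $0\le j\le k$; the case $j=0$ is trivial.

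\textbf{Step 1: algebraic expansion.} Iterating the identity $\n_{A_{ref}}=\n_A-a\#$ and using the Leibniz rule for $\n_A$, we show by induction on $j$ that
\begin{equation*}
\n^j_{A_{ref}}f=\n^j_Af+\sum_{\substack{i_0+i_1+\cdots+i_s+s=j,\\ s\ge1}}\n^{i_1}_Aa\#\cdots\#\n^{i_s}_Aa\#\n^{i_0}_Af .
\end{equation*}
The derivatives landing on the $a$-factors satisfy $i_\ell\le j-1\le k-1$, and the number of $a$-factors is $s\le j\le k$. Hence each term is multilinear of degree at most $k$ in $a$ and of degree one in $f$.

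\textbf{Step 2: estimating each term in $L^2$.} We place $L^2$, $L^4$ or $L^\infty$ norms on the factors, using Corollary \ref{Sob1} and Corollary \ref{Sob2} with respect to $A$. Three cases arise.
\begin{itemize}
\item[(i)] If every $a$-factor has $i_\ell\le k-3$, then $\n^{i_\ell}_Aa\in H^{2,2}_A\subset L^\infty$ by \eqref{ineq-2}, with norm controlled by $\norm{a}_{H^{k-1,2}_A}$. The $f$-factor is bounded in $L^2$ by $\norm{f}_{H^{k,2}_A}$.
\item[(ii)] If some $a$-factor has $i_\ell=k-2$, then this factor lies in $H^{1,2}_A\subset L^4$ by \eqref{ineq-1}. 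In this case $s\le 2$ and $i_0\le 1$, so $\n^{i_0}_Af\in H^{k-1,2}_A\subset L^4$ (since $k\ge2$), and any remaining $a$-factor has low order and lies in $L^\infty$.
\item[(iii)] If some $a$-factor has $i_\ell=k-1$, then $j=k$, $s=1$, $i_0=0$. We put $\n^{k-1}_Aa$ in $L^2$ and $f$ in $L^\infty$, using $\norm{f}_{L^\infty}\le C\norm{f}_{H^{2,2}_A}\le C\norm{f}_{H^{k,2}_A}$ by \eqref{ineq-2}.
\end{itemize}

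\textbf{Step 3: summation.} Collecting the terms, each is bounded by $C\norm{a}^{s}_{H^{k-1,2}_A}\norm{f}_{H^{k,2}_A}$ with $0\le s\le k$, which is covered by the range $0\le j\le k+1$ in \eqref{equiv-es1}. Summing over $j$ gives the claim.

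\textbf{Main obstacle.} The only delicate point is the endpoint bookkeeping. Only $\norm{a}_{H^{k-1,2}_A}$ is available, so the top-order term $\n^{k-1}_Aa\#f$ forces $f$ into $L^\infty$, and the second-highest terms require the $L^4$–$L^4$ pairing. When $k=2$, the second-highest case is $i_\ell=0$, i.e. $a\in H^{1,2}_A\subset L^4$, paired with $\n_Af\in H^{1,2}_A\subset L^4$; this still works. The constants depend only on the geometry of $\Si$ through Theorem \ref{Sob}, which is uniform in the connection $A$ as stated.
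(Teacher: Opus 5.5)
Your proposal is correct and is essentially the paper's argument; the paper only says that the proof of Lemma~\ref{equiv-es-a} carries over, namely expand $\n^j_{A_{ref}}f$ into $\n_A$-derivatives of $a$ and $f$ and bound each term by H\"older together with \eqref{ineq-1} and \eqref{ineq-2}. One small bookkeeping slip: for $k=2$ the term $a\#a\#f$ falls into your case (ii), but the second factor $a$ is then only in $H^{1,2}_A$ and not in $L^\infty$, so for that term put both $a$-factors in $L^4$ and $f$ in $L^\infty$ via \eqref{ineq-2}.
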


\subsubsection{Sobolev spaces on the associated bundle \texorpdfstring{$\F$}{F}}

For our applications, we also need to define the Sobolev spaces on the associated bundle $\F$, whose fiber space $M$ is a compact K\"ahler manifold. Let $A$ be a connection. For any section $\phi\in\Ga(\F)$, we have
\[\n_A\phi\in \Ga(T^*\Si\otimes \phi^*T\F^{v}).\]
Applying the construction in the previous subsection with the vector bundle $E=T^*\Si\otimes \phi^*T\F^{v}$, we define the intrinsic Sobolev norm of $\phi$ by
\[\norm{\phi}_{H^{k,2}_A}:=\left(\sum_{j=0}^k\norm{\n^j_A\phi}^2_{L^2}\right)^{\frac{1}{2}}.\]
With this definition, the Sobolev interpolation inequalities
\eqref{G-N-ineq} and \eqref{ineq-1}-\eqref{ineq-4} can be applied to the covariant derivative $\nabla_A\phi$, viewed as a section of $T^*\Sigma\otimes \phi^*T\F^v$.

However, the pullback bundle $\phi^*T\F^v$ depends on the section $\phi$ itself. Consequently, the intrinsic Sobolev norm is not convenient for applying the standard Sobolev
embedding theorems and compactness results, which require a fixed ambient vector bundle. For the PDE analysis, we therefore introduce an extrinsic Sobolev norm by embedding the bundle $\F$ into a
fixed ambient vector bundle.

We choose and fix an isometric embedding
$i:M\to\mathbb R^K$ and a representation $\rho:G\longrightarrow SO(K)$, such that $i(g\cdot y)=\rho(g)i(y)$, for every $y\in M$ and $g\in G$; see \cite{MS}. This equivariant embedding canonically induces an isometric embedding from the associated vector bundle $\mathcal{F} = \mathcal{P}\times_G M$ into the vector bundle $\P\times_\rho \mathbb{R}^K$. The connection $A$ (here, we still denote $\rho(A)$ by $A$ for simplicity) induces a covariant derivative $\tilde{\n}_A$ on $\P\times_\rho \mathbb{R}^K$. Under this embedding, every section $\phi\in \Ga(\F)$ can be viewed as a section of $\P\times_\rho \mathbb{R}^K$. Hence, we define the extrinsic Sobolev norm of $\phi$ by
\[\norm{\phi}_{W^{k,2}_A}:=\left(\sum_{j=0}^k\norm{\tilde{\n}^j_A\phi}^2_{L^2}\right)^{\frac{1}{2}}.\]

The intrinsic and extrinsic Sobolev norms are equivalent for $k>\frac{m}{2}$. More precisely, the argument of \cite{DW01} gives the following lemma; we omit the details.
\begin{lemma}\label{equiv-norm}
Assume that $k>\frac{m}{2}$. Then there exists a constant $C(k,m)$ such that for any $\phi\in \Ga(\F)$, we have
\begin{align}
\norm{\phi}_{H^{k,2}_A}\leq C\sum_{j=0}^k\norm{\phi}^j_{W^{k,2}_A},\label{equiv-norm-1}\\
\norm{\phi}_{W^{k,2}_A}\leq C\sum_{j=0}^k\norm{\phi}^j_{H^{k,2}_A}.\label{equiv-norm-2}
\end{align}
\end{lemma}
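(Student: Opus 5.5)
The plan follows the argument of Ding--Wang \cite{DW01}, transplanted to the bundle setting through the equivariant embedding $i:M\to\mathbb R^K$. Since $\phi$ takes values in $i(M)\subset \P\times_\rho\mathbb R^K$ and $i$ is $G$-equivariant, the two derivatives are related by
\[
\tilde\n_A\phi = \n_A\phi,\qquad \tilde\n_A Y = \n_A Y + \mathrm{II}(\phi)(\n_A\phi, Y)
\]
for vertical tensors $Y$ along $\phi$. Here $\mathrm{II}$ is the second fundamental form of $M\subset\mathbb R^K$, extended to $\F$ by equivariance. Iterating this formula gives, for $j\ge1$,
\[
\tilde\n^j_A\phi = \n^{j-1}_A\n_A\phi + \sum_{\substack{i_1+\cdots+i_s = j-s,\ s\ge2}} P(\phi)\bigl(\n^{i_1}_A\n_A\phi,\dots,\n^{i_s}_A\n_A\phi\bigr).
\]
Each coefficient $P$ is built from $\mathrm{II}$ and its derivatives, so it is uniformly bounded because $M$ is compact. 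The converse expansion has the same shape, with $\n^{j-1}_A\n_A\phi$ written as $\tilde\n^j_A\phi$ plus multilinear terms in $\tilde\n^{i}_A\phi$. In that case one uses the normal projection: $\n_A$ is the tangential projection of $\tilde\n_A$, and the projection operator $P^T(\phi)$ has bounded derivatives. No curvature of $A$ enters, since only the order of differentiation is being rearranged, never commuted. The zeroth-order terms are harmless because $M$ is compact, so $|\phi|\le C$ in the extrinsic picture.

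Next I estimate the lower-order multilinear terms in $L^2$ using Corollary \ref{Sob1} applied to $f=\n_A\phi$. This is a section of the vector bundle $T^*\Si\otimes\phi^*T\F^v$, and as noted in the text the interpolation inequalities apply to it. Take a term with $s\ge2$ and $\sum i_l = j-s\le k-2$. One factor of highest order $i_{l}\le k-2$ goes in $L^2$ or $L^4$, and the remaining factors go in $L^4$ or $L^\infty$. Each factor satisfies $\norm{\n^{i}_A\n_A\phi}_{L^p}\le C\norm{\phi}_{H^{k,2}_A}$ whenever $i+1\le k-1$ for $p=\infty$ (by \eqref{ineq-2}), or $i+1\le k-1$ for $p=4$ (by \eqref{ineq-1}). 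This works because $k>m/2=1$, i.e. $k\ge2$, the usual dimension restriction.

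Counting factors, each such term is bounded by $C\norm{\phi}^s_{H^{k,2}_A}$ with $s\le j\le k$. Summing over $j\le k$ gives \eqref{equiv-norm-2}, and the symmetric argument gives \eqref{equiv-norm-1}. The powers start at $0$ because of the $L^2$ term $\norm{\phi}_{L^2}\le C$ coming from compactness.

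The main obstacle is the borderline case where two factors have nearly top order, e.g.\ $\n^{k-2}_A\n_A\phi\#\n_A\phi$ when $k=2$. There $L^\infty$ control of the lower factor is not available, so I would place both factors in $L^4$ and use \eqref{ineq-1}, $\norm{f}_{L^4}\le C\norm{f}_{H^{1,2}_A}$. This is exactly what is needed in two dimensions, and the orders are arranged so that $H^{1,2}$ of each factor is controlled by $\norm{\phi}_{H^{k,2}_A}$.
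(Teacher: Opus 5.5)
Your route is the one the paper intends. The paper omits the proof and defers to Ding--Wang, whose argument is exactly your expansion of $\tilde\n^j_A\phi$ through the second fundamental form, and back through $P^T(\phi)$, followed by interpolation estimates on the multilinear remainders. Your derivation of $\tilde\n_AY=\n_AY+\mathrm{II}(\phi)(\n_A\phi,Y)$ from the equivariance of $i$ is correct, as are the remark that no curvature of $A$ appears and the count $s\le j\le k$ that produces the powers. For $m=2$ the argument closes.

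There is, however, a gap in scope. The lemma is stated for a base of arbitrary dimension $m$ with $k>m/2$, and you silently set $m=2$. Your estimates rest on $H^{1,2}\hookrightarrow L^4$ and $H^{2,2}\hookrightarrow L^\infty$ from Corollary~\ref{Sob1}, which fail for $m\ge5$ and $m\ge4$ respectively. As written, the proof therefore covers only the surface case, which is the only one the paper later uses.

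For the general statement, use Theorem~\ref{Sob} instead. In a remainder term with $s\ge2$ factors, each factor $\n^{i_l}_A\n_A\phi$ lies in $H^{r_l,2}$ with $r_l=k-1-i_l\ge1$. Consider the $t$ factors with $r_l<m/2$. Since $\sum_l(i_l+1)\le k$, one has
\[\sum\Bigl(\tfrac12-\tfrac{r_l}{m}\Bigr)\le \tfrac t2-\tfrac{(t-1)k}{m}.\]
The right-hand side is $<\tfrac12$ when $t\ge2$ precisely because $k>m/2$; when $t=1$ the sum is $<\tfrac12$ because $r_l\ge1$. The remaining factors go into $L^\infty$ or an arbitrarily large $L^p$, and H\"older closes. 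The factors $\tilde\n^{i_l}_A\phi$ in the reverse expansion are handled the same way. This exponent count is where the hypothesis $k>m/2$ actually enters.

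Separately, your $L^\infty$ criterion is misstated. Inequality \eqref{ineq-2} gives $\norm{\n^i_A\n_A\phi}_{L^\infty}\le C\norm{\phi}_{H^{k,2}_A}$ only for $i+1\le k-2$, not $i+1\le k-1$. Your allocation still works after the correction:
\begin{itemize}
\item for $s\ge3$, every factor other than the top-order one has $i_l+1\le\frac{k-1}{2}\le k-2$;
\item for $s=2$, both factors satisfy $i_l+1\le k-1$ and can be placed in $L^4$.
\end{itemize}
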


\section{Yang-Mills-Higgs-Schr\"odinger flow}\label{s-YMHSF}

\subsection{Definition of the YMHS flow}

Let $(\Sigma,\omega,g,j)$ be a compact almost K\"ahler manifold and let $\mathcal P\to\Sigma$ be a principal bundle for a compact Lie group $G$. Let $(M,\Omega,h,J)$ be an almost K\"ahler manifold on which $G$ acts in a Hamiltonian fashion, preserving the compatible structures, and let $\mu:M\to\mathfrak g$ be an equivariant moment map. Here we identity the Lie algebra $\g$ with its dual $\G^*$ via a bi-invariant metric. The associated bundle is $\mathcal F=\mathcal P\times_GM$. We write $\mathscr A$ for the space of connections on $\mathcal P$, $\mathscr S=\Gamma(\mathcal F)$ for the space of sectionis, and $\mathscr G=\Gamma(\operatorname{Ad}\mathcal P)$ for the gauge group.
The $L^2$ metric and the compatible almost complex structure
\[
 \mathcal J(a,X)=(-j a,JX),\qquad (a,X)\in T_{(A,\phi)}(\mathscr A\times\mathscr S),
\]
make $\mathscr A\times\mathscr S$ an infinite-dimensional almost K\"ahler space. 

Recall that the gauge action on $\mathscr{A}$ is Hamiltonian with moment map
\[ \mu_\mathscr{A}:\mathscr{A} \to Lie(\mathscr{G}), \quad A\to \Lambda_\om F_A.\]
Moreover, the $G$-action on $M$ induces a Hamiltonian action of $\G$ on $\mathscr{S}$, with moment map
\[ \mu_\mathscr{S}:\mathscr{S} \to Lie(\mathscr{G}), \quad \phi\to \mu(\phi).\] 
Combined together, they give a Hamiltonian action of $\G$ on $\A\times \mathscr{S}$, with moment map
\[ \mu_\mathscr{\mathscr{A}\times \mathscr{S}}:\mathscr{A}\times \mathscr{S} \to Lie(\mathscr{G}),
\quad (A,\phi)\to \Lambda_\om F_A+\mu(\phi).\]
After shifting by a fixed central element $c\in\mathfrak g$, we use throughout the convention
\begin{equation}\label{def:F-A-phi}
 F_{A,\phi}:=\Lambda_\omega F_A+\mu(\phi)-c.
\end{equation}

The Yang--Mills--Higgs(YMH) functional is defined on $\A\times\mathscr{S}$ by
\begin{equation}\label{YMH1}
 \mathcal E(A,\phi):=\frac12\int_\Sigma\bigl(|D_A\phi|^2+|F_A|^2+|\mu(\phi)-c|^2\bigr)\,dx.
\end{equation}
Its $L^2$ gradient is
\[
 \nabla\mathcal E(A,\phi)=\bigl(D_A^*F_A+\phi^*D_A\phi,\ D_A^*D_A\phi+(\mu(\phi)-c)\nabla\mu(\phi)\bigr).
\]
where $D_A^*$ is the $L^2$-adjoint of $D_A$. To explain the lower order terms, recall that for any $X\in \mathfrak{X}(M)$ and $\xi\in \mathfrak{g}$,
\[ \<\phi^*X, \xi\> = h(X_{\xi}(\phi), X) = \om(X_{\xi}(\phi), JX) = \iota_{X_\xi}\om(JX) = \<d\mu(\phi), \xi\>(JX).\]
This implies
\[  j\phi^*D_A\phi = d\mu(\phi)(J\circ D_A\phi \circ j). \]
Using the identity \eqref{eq:momentum2}, we can also rewrite
\[(\mu(\phi)-c)\n\mu(\phi)=J(\mu(\phi)-c)\phi.\]

Now we define the Yang--Mills--Higgs--Sch\"odinger(YMHS) flow by its Hamiltonian flow 
\[ \partial_t(A,\phi)=-\mathcal J\nabla\mathcal E(A,\phi),\]
or more explicitly
\begin{equation}\label{eq:YMHS}
\begin{cases}
 \partial_tA=j\bigl(D_A^*F_A+\phi^*D_A\phi\bigr),\\
 \partial_t\phi=-J(\phi)\bigl(D_A^*D_A\phi+(\mu(\phi)-c)\nabla\mu(\phi)\bigr).
\end{cases}
\end{equation}
Because $\mathcal E$ is gauge invariant, Nother's principle (Theorem~\ref{t:Noether}) gives the conservation law
\begin{equation}\label{eq:F-A}
 \partial_tF_{A,\phi}=0.
\end{equation}
Consequently, the YMHS flow restricts to every conserved moment-map level set.

\subsection{K\"ahler structure and equivalent formulations}

Assume now that the compatible structures are integrable and that $\mathcal F$ is holomorphic. Define
\[
 \bar\partial_A\phi=\frac12(D_A\phi+J\circ D_A\phi\circ j),\qquad
 \partial_A\phi=\frac12(D_A\phi-J\circ D_A\phi\circ j).
\]
The K\"ahler identities yield the energy decomposition
\begin{equation}\label{YMH2}
 \mathcal E(A,\phi)=\int_\Sigma\left(\frac12|F_{A,\phi}|^2+|\bar\partial_A\phi|^2+2|F_A^{0,2}|^2\right)dx+C([\phi]),
\end{equation}
where $C([\phi])$ depends only on the homotopy class of $\phi$ and the topological type of $\mathcal F$; see \cite{M-thesis,CGS00}. In particular, on $\mathscr A^{(1,1)}\times\mathscr S$,
\begin{equation}\label{YMH3}
 \mathcal E(A,\phi)=\int_\Sigma\left(\frac12|F_{A,\phi}|^2+|\bar\partial_A\phi|^2\right)dx+C([\phi]).
\end{equation}

The following lemma is obvious if we regard the YMHS as the Hamiltonian flow of the functional \eqref{YMH3}.  

\begin{lemma}\label{l:3-1}
When restricted to $\A^{(1,1)}$, the YMHS flow \eqref{eq:YMHS} can be rewritten as
\begin{equation}\label{eq:YMHS-1}
    \begin{cases}
        \p_t A = -D_AF_{A,\phi} + 2d\mu(\phi)\dbar_A \phi,\\
        \p_t \phi =  F_{A,\phi}\phi - 2J\dbar_A^*\dbar_A\phi.
    \end{cases}
\end{equation}
\end{lemma}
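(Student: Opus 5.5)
The plan is to use the fact that the YMHS flow is the Hamiltonian flow of $\mathcal E$ for the structure $\mathcal J$, and that on $\A^{(1,1)}\times\mathscr S$ the functional $\mathcal E$ differs from
\[
\widetilde{\mathcal E}(A,\phi):=\int_\Si\left(\tfrac12|F_{A,\phi}|^2+|\dbar_A\phi|^2\right)dx
\]
by the topological constant $C([\phi])$ in \eqref{YMH3}. In fact \eqref{YMH2} shows that $\mathcal E-\widetilde{\mathcal E}-C([\phi]) = 2\int|F_A^{0,2}|^2$ on all of $\A\times\mathscr S$. This term is quadratic in $F^{0,2}_A$, so its $L^2$-gradient vanishes at points of $\A^{(1,1)}$. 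Hence $\n\mathcal E=\n\widetilde{\mathcal E}$ there, and it suffices to compute $-\mathcal J\n\widetilde{\mathcal E}$.

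First I would compute the variation of $\tfrac12\int|F_{A,\phi}|^2$. For a variation $(a,X)$ we have $\delta F_{A,\phi}=\Lambda_\om D_Aa+d\mu(\phi)X$. Since $d\mu_\xi=h(JX_\xi,\cdot)$, the pairing $\<d\mu(\phi)X,\xi\>$ equals $h(JX_\xi(\phi),X)$. The $\phi$-gradient is therefore $J F_{A,\phi}\phi$, where $\xi\phi$ means $X_\xi(\phi)$. For the $A$-gradient, the Kähler identity on a surface gives $(\Lambda_\om D_A)^*\xi = -*D_A\xi = j D_A\xi$ up to sign conventions. Applying $-\mathcal J=(j,-J)$ then gives $\p_tA\ni j\cdot jD_AF_{A,\phi}=-D_AF_{A,\phi}$ and $\p_t\phi\ni -J\cdot JF_{A,\phi}\phi=F_{A,\phi}\phi$. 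These match the first terms of \eqref{eq:YMHS-1}; the signs will be fixed so that they agree with $\Lambda_\om F_A$ being the moment map for $-j$.

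Next comes the variation of $\int|\dbar_A\phi|^2$. We have $\delta\dbar_A\phi=\dbar_A$-part of $(\n_A X + a\cdot\phi)$, namely $\dbar_A X+(a\phi)^{0,1}$ with $(a\phi)^{0,1}=\tfrac12(a\phi+J(a\phi)\circ j)$. The $X$-gradient is $2\dbar_A^*\dbar_A\phi$, and after applying $-J$ this contributes $-2J\dbar_A^*\dbar_A\phi$. The $a$-gradient is $2\,\phi^*(\dbar_A\phi)$, projected appropriately. The identity $j\phi^*D_A\phi=d\mu(\phi)(J\circ D_A\phi\circ j)$ recorded earlier, applied to $\dbar_A\phi$, which satisfies $J\dbar_A\phi\circ j=-\dbar_A\phi$, gives $j\,\phi^*\dbar_A\phi=-d\mu(\phi)\dbar_A\phi$. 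Multiplying by $j$ from $-\mathcal J$ then yields $2d\mu(\phi)\dbar_A\phi$ up to sign.

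The main obstacle is sign and type bookkeeping: the conventions for $j$ acting on one-forms, the adjoint of $\Lambda_\om D_A$, and the interpretation of $d\mu(\phi)\dbar_A\phi$ as a $\g$-valued one-form whose $(1,0)$ and $(0,1)$ parts combine correctly. I would check each term against the direct formula \eqref{eq:YMHS}, using the Weitzenböck/Kähler identity
\[
D_A^*D_A\phi = 2\dbar_A^*\dbar_A\phi + J(\Lambda_\om F_A)\phi,
\]
up to convention. The $\phi$-equations of \eqref{eq:YMHS} and \eqref{eq:YMHS-1} then agree after adding $J(\mu-c)\phi=(\mu-c)\n\mu$. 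The $A$-equations agree via $D_A^*F_A=jD_A\Lambda_\om F_A$ for $F_A$ of type $(1,1)$, together with the splitting $\phi^*D_A\phi=\phi^*\p_A\phi+\phi^*\dbar_A\phi$ and the moment-map identity. This direct check confirms both the result and the signs.
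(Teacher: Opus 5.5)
Your proposal is correct and is essentially the paper's argument: the paper only remarks that the lemma is obvious once the YMHS flow is viewed as the Hamiltonian flow of \eqref{YMH3}, and your observation that $2\int_\Si|F_A^{0,2}|^2$ has vanishing gradient on $\A^{(1,1)}$ is the unstated justification for replacing $\mathcal E$ by that functional. One sign needs correcting. With $\dbar_A\phi=\frac12(D_A\phi+J\circ D_A\phi\circ j)$ one has $J\circ\dbar_A\phi\circ j=+\dbar_A\phi$, not $-\dbar_A\phi$. The identity $j\phi^*D_A\phi=d\mu(\phi)(J\circ D_A\phi\circ j)$ then gives $j(2\phi^*\dbar_A\phi)=2d\mu(\phi)\dbar_A\phi$ exactly, so you need no ``up to sign'' hedge and no second application of $j$.
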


Note that the terms in \eqref{eq:YMHS-1} involving $F_{A,\phi}$ is tangent to the gauge orbit in view of \eqref{e:A-action} and \eqref{e:F-action}. Therefore, by choosing a time-dependent gauge transformation $S(t)$ satisfying
	\begin{equation*}
		\begin{cases}
			\p_t S = F_{A,\phi},\\
			S(0) = id,
		\end{cases}
	\end{equation*}
the YMHS flow (\ref{eq:YMHS-1}) is in turn equivalent to the flow
	\begin{equation}\label{eq:YMHS2}
		\begin{cases}
			\p_t A = 2d\mu(\phi)\dbar_{A} \phi,\\
			\p_t \phi = -2J\dbar_{A}^*\dbar_{A}\phi.
		\end{cases}
	\end{equation}
This is precisely the Hamiltonian flow associated with the functional
	\[ E(A,\phi) = \int_\Si|\dbar_A\phi|^2. \]

If $\Sigma$ is a Riemann surface, then $F_A^{0,2}=0$ for every connection, $\Lambda_\omega F_A=*F_A$, and all formulas above hold on the full space $\mathscr A\times\mathscr S$. In particular,
\[
 F_{A,\phi}=*F_A+\mu(\phi)-c.
\]
The symplectic-vortex equations $\bar\partial_A\phi=0$ and $F_{A,\phi}=0$ therefore give stationary solutions to the YMHS flow.

\section{Local existence of YMHS flow}\label{s-perturbed-YMHSF}

In this section, we prove the local existence of YMHS flow on a surface. From now on, we suppose $\Si$ is a compact Riemann surface, $M$ is compact K\"ahler manifold and the associated bundle $\F$ is holomorphic.

\subsection{Perturbed YMHS flow}

The YMHS flow \eqref{eq:YMHS} is a degenerate Schr\"odinger-type system. A standard way to prove its existence is the parabolic regularization method. 

For $\ep>0$, we define the perturbed YMHS flow:
\begin{equation}\label{eq-p-YMHS-0}
	\begin{cases}
		\p_tA= (j-\ep I)\(D_A^*F_A+\phi^*D_A\phi\),\\
		\p_t\phi = -(J(\phi)+\ep I)\(D_A^*D_A\phi+ J(\mu(\phi)-c)\phi\),
	\end{cases}
\end{equation}
where $I$ denotes the identity map, and $(A,\phi)|_{t=0}=(A_0, \phi_0)$. Because of gauge invariance, the perturbed YMHS flow \eqref{eq-p-YMHS-0} is only weakly parabolic. We therefore use the De Turck trick to establish local existence.

Let $A_0$ be a fixed smooth connection and write $\bar{A}=\bar{a}+A_0$. Consider the following De Turck flow:
\begin{equation}\label{eq-p-YMHS}
\begin{cases}
\p_t\bar{a}= -\ep (D_{\bar{A}}^*F_{\bar{A}}+D_{\bar{A}}D^*_{\bar{A}}\bar{a})+(j-\ep I)\bar{\phi}^*D_{\bar{A}}\bar{\phi},\\
\p_t\bar{\phi}=-(J(\bar{\phi})+\ep I)\(D^*_{\bar{A}}D_{\bar{A}}\bar{\phi}+ J(\bar{\phi})(\mu(\bar{\phi})-c)\bar{\phi}\)-(*F_{\bar{A}}-\ep D^*_{\bar{A}} \bar{a})\bar{\phi},
\end{cases}
\end{equation}
with $(\bar{a}, \bar{\phi})|_{t=0}=(0,\phi_0)$.

Since
\[
 F_{\bar A}=D_{\bar A}\bar a+F_{A_0}-\frac12[\bar a,\bar a],
\]
the connection equation becomes
\begin{align*}
\partial_t\bar a={}&-\varepsilon(D_{\bar A}^*D_{\bar A}+D_{\bar A}D_{\bar A}^*)\bar a
 -\varepsilon D_{\bar A}^*F_{A_0}+\frac\varepsilon2D_{\bar A}^*[\bar a,\bar a]
 +(j-\varepsilon I)\bar\phi^*D_{\bar A}\bar\phi.
\end{align*}
Since 
\[ D_{\bar A}^*D_{\bar A}+D_{\bar A}D_{\bar A}^* = D_{A_0}^*D_{A_0}+D_{A_0}D_{A_0}^* + \text{lower order terms},\]
the principal part of the equation for $\bar a$ is the Hodge Laplacian on $1$-forms. For the equation of the section $\bar{\phi}$, the principal part is $(J+\varepsilon I)\Delta_{\bar A}\phi$ in the convention $\Delta_{\bar A}=-\nabla_{\bar A}^*\nabla_{\bar A}$. Since $J$ is skew-symmetric, the principal symbol is uniformly positive for each fixed $\varepsilon>0$. Therefore, the De Turck flow \eqref{eq-p-YMHS} is a strictly quasilinear parabolic system and sandard parabolic theory gives a smooth solution on some interval $[0,T_\varepsilon)$ for smooth initial data.

The local existence of the perturbed YMHS flow now follows by a gauge transformation.

\begin{thm}\label{loc-ex-perburbed-YMHSF}
For every smooth initial pair $(A_0,\phi_0)$ and every fixed $\varepsilon>0$, there are $T_\ep>0$ and a smooth solution of \eqref{eq-p-YMHS-0} on $\Sigma\times[0,T_\ep)$ with the prescribed initial data.
\end{thm}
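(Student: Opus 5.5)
The plan is to take the smooth solution $(\bar a,\bar\phi)$ of the De Turck flow \eqref{eq-p-YMHS} on $[0,T_\ep)$, which exists by standard quasilinear parabolic theory as explained above. I would then undo the De Turck term with a time-dependent gauge transformation $S(t)\in\G$ and set
\[
A=S^*\bar A,\qquad \phi=S^*\bar\phi,
\]
with $S$ chosen so that the extra terms in \eqref{eq-p-YMHS} are exactly infinitesimal gauge motions.

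First I rewrite \eqref{eq-p-YMHS-0} in a form that exposes its gauge-tangent part. On a Riemann surface $j D_A^* F_A = -D_A(*F_A)$ up to sign convention, as in the derivation of \eqref{eq:YMHS-1}. Comparing the two systems, the connection equation of \eqref{eq-p-YMHS} differs from that of \eqref{eq-p-YMHS-0} by
\[
D_{\bar A}\bigl(-*F_{\bar A}-\ep D^*_{\bar A}\bar a\bigr)
\]
(signs to be fixed by the convention). The section equation differs by $-(*F_{\bar A}-\ep D^*_{\bar A}\bar a)\bar\phi$. By \eqref{e:A-action} and \eqref{e:F-action}, both differences are the infinitesimal action of the same $\xi(t)=\xi(\bar A,\bar a)\in\Gamma(ad\P)$, namely a combination $\pm *F_{\bar A}+\ep D^*_{\bar A}\bar a$. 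Checking that a single $\xi$ produces both corrections consistently, with the correct signs, is the step I expect to be the main obstacle. It is pure bookkeeping of the conventions $S^*A=S^{-1}dS+S^{-1}AS$ and $S^*\phi=S^{-1}\phi$. If the $*F$ part turns out not to match directly, I will use the $j$-structure to move it: $jD_A^*F_A=-D_A*F_A$ is itself a gauge direction.

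Next I solve the ODE $\partial_t S=S\,\xi(t)$ (or $\xi S$, according to convention) with $S(0)=\mathrm{id}$, pointwise on $\Sigma$. Since $\xi$ is smooth in space and time on $\Sigma\times[0,T_\ep)$, this linear ODE in the compact group $G$ has a unique smooth global solution on $[0,T_\ep)$, and $S(t)$ is a smooth gauge transformation. Differentiating $A=S^*\bar A$ gives
\[
\partial_t A=S^*(\partial_t\bar A)+D_A(\mathrm{Ad}_{S^{-1}}\text{-transformed }\xi)
\]
and similarly for $\phi$. The added gauge term cancels the De Turck correction, and the remaining right-hand side is $S^*$ applied to the right-hand side of \eqref{eq-p-YMHS-0} at $(\bar A,\bar\phi)$.

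Finally, I use gauge equivariance: $D_A^*F_A$, $\phi^*D_A\phi$, $D_A^*D_A\phi$, $J$, $\mu$ and $c$ (central) all transform naturally under $\G$. Hence $S^*$ of the right-hand side equals the right-hand side of \eqref{eq-p-YMHS-0} evaluated at $(A,\phi)$, so $(A,\phi)$ solves \eqref{eq-p-YMHS-0}. The initial data are preserved because $S(0)=\mathrm{id}$ and $(\bar a,\bar\phi)(0)=(0,\phi_0)$, giving $(A,\phi)(0)=(A_0,\phi_0)$. Smoothness of $(A,\phi)$ follows from smoothness of $S$, $\bar a$ and $\bar\phi$.
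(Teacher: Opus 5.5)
Your proposal is correct and follows the paper's own proof: the paper solves the pointwise ODE $s^{-1}\partial_t s=-s^{-1}(*F_{\bar A}-\ep D^*_{\bar A}\bar a)s$ with $s(0)=\mathrm{id}$, and uses $\partial_t A=D_A(s^{-1}\partial_t s)+s^{-1}(\partial_t\bar a)s$ together with gauge covariance to show that the De Turck terms cancel in both equations, exactly as you plan. To settle your hedges: with the paper's convention $-j=*$ on $1$-forms, the connection discrepancy is $D_{\bar A}(*F_{\bar A}-\ep D^*_{\bar A}\bar a)$, with a plus sign in front of $*F_{\bar A}$, so a single $\xi=-(*F_{\bar A}-\ep D^*_{\bar A}\bar a)$ handles both equations; and the correct ODE is the left-multiplication form $\partial_t S=\xi S$, which is the form your displayed $\mathrm{Ad}_{S^{-1}}\xi$ formula already implicitly uses.
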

\begin{proof}
Let $(\bar a,\bar\phi)$ be a local solution to \eqref{eq-p-YMHS} on $[0,T_\ep)$.
Let $s(t)$ solve the ODE
\begin{equation}\label{gauge}
\begin{cases}
 s^{-1}\partial_ts=-s^{-1}(*F_{\bar A}-\varepsilon D_{\bar A}^*\bar a)s,\\
 s(0)=\operatorname{id}.
\end{cases}
\end{equation}
We claim that 
\[
 (A,\phi)=(s^*\bar A,s^*\bar\phi)
\]
solves the perturbed YMHS flow \eqref{eq-p-YMHS-0} with initial data $(A_0,\phi_0)$.

Indeed, with $A=s^{-1}ds+s^{-1}\bar As$ and $\xi=s^{-1}\partial_ts$, gauge covariance gives
\[
 \partial_tA=D_A\xi+s^{-1}(\partial_t\bar a)s.
\]
Substituting \eqref{gauge} and the first equation of \eqref{eq-p-YMHS}, the De Turck terms cancel:
\begin{align*}
\partial_tA
&=s^{-1}\left(-D_{\bar A}*F_{\bar A}-\varepsilon D_{\bar A}^*F_{\bar A}
 +(j-\varepsilon I)\bar\phi^*D_{\bar A}\bar\phi\right)s\\
&=(j-\varepsilon I)(D_A^*F_A+\phi^*D_A\phi),
\end{align*}
where $-j=*$ on $1$-forms on the Riemann surface. Similarly, $\phi=s^{-1}\bar\phi$ gives
\[
 \partial_t\phi=s^{-1}\partial_t\bar\phi-\xi\phi.
\]
The zeroth-order De Turck term in the second equation of \eqref{eq-p-YMHS} cancels $-\xi\phi$, yielding
\[
 \partial_t\phi=-(J(\phi)+\varepsilon I)\bigl(D_A^*D_A\phi+(\mu(\phi)-c)\nabla\mu(\phi)\bigr).
\]
The initial conditions follow from $s(0)=\operatorname{id}$.
\end{proof}

We next record the evolution inequalities used below. To simplify the notations, we suppress the subscripts when there is no ambiguity. For example, we write $\n$ for $\n_A$, $D$ for $D_A$, $F$ for $F_A$, $H^{k,2}$ for $H_A^{k,2}$, and denote 
\[\Psi :=F_{A,\phi}.\]
We also use the convention $|\n^0\phi|=1$.

\begin{lemma}\label{eq-high-PYMHSF}
Let $(A, \phi)$ be a smooth solution to equation \eqref{eq-p-YMHS-0}. Then for any $k\geq 1$, the following evolution inequalities hold:
\begin{equation}\label{eq-high-order-phi}
\begin{aligned}
\frac{1}{2}\p_t \int_{\Si}|\n^{k}  \phi|^2dx\leq &-\ep \int_{\Si}|\n^{k+1}  \phi|^2dx+C\norm{\n\phi}^2_{H^{k-1,2}}+C\sum_{i+l=k}\int_{\Si}|\n^i  \Psi||\n^l \phi||\n^{k}  \phi|dx\\
&+C\sum_{\substack{i_1+\cdots+i_s=k+2,\\ s\geq 3,i_l\geq 1}}\int_{\Si}\prod_{q=1}^s|\n^{i_q} \phi||\n^k\phi|dx\\
&+C\sum_{i_1+\cdots+i_s=k, i_l\geq 1}\int_{\Si}\prod_{q=1}^s|\n^{i_q} \phi||\n^k\phi|dx,
\end{aligned}
\end{equation}
and
\begin{equation}\label{eq-high-order-F}
\begin{aligned}
\frac{1}{2}\p_t\int_{\Si}|\n^k  \Psi|^2dx\leq &-\ep\int_{\Si}|\n^{k+1} \Psi|^2dx+C\norm{\Psi}^2_{H^{k,2}}+C\ep\int_{\Si}|\n^{k+1} \phi||\n\phi||\n^k \Psi|dx\\
&+C\sum_{i+l=k}\int_{\Si}|\n^i \Psi||\n^l \Psi||\n^k \Psi|dx\\
&+C\sum_{i_1+\cdots+i_s+i=k,i_l\geq 1}\int_{\Si}\prod_{q=1}^s|\n^{i_q} \phi||\n^i \Psi||\n^k \Psi|dx\\
&+C\ep\sum_{\substack{i_1+\cdots+i_s=k+2,\\ s\geq 2,1\leq i_l\leq k}}\int_{\Si}\prod_{q=1}^s|\n^{i_q} \phi||\n^k \Psi|dx.
\end{aligned}
\end{equation}
\end{lemma}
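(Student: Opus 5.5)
We derive both inequalities by commuting covariant derivatives with the time derivative and with the Laplacian, and we isolate the top-order terms that carry either a sign or a cancellation. All other terms are absorbed into the schematic sums. Throughout we work with $\n=\n_A$, including the time direction: we set $\n_t\phi=\p_t\phi$, the covariant time derivative on $\phi^*T\F^v$ is induced by the Levi-Civita connection of $M$ together with $A$, and $\p_t A$ enters through commutators.

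\textbf{The estimate \eqref{eq-high-order-phi}.} First we rewrite the $\phi$-equation in intrinsic form. On a Riemann surface,
\[
D_A^*D_A\phi=-\tau_A(\phi)=-\operatorname{tr}\n D_A\phi,
\]
and the potential term is $J(\mu(\phi)-c)\phi=(\mu(\phi)-c)\n\mu(\phi)$. Hence
\[
\n_t\phi=(J+\ep I)\bigl(\tau_A(\phi)-(\mu-c)\n\mu\bigr).
\]
We then compute
\[
\tfrac12\p_t\int|\n^k\phi|^2=\int\<\n_t\n^k\phi,\n^k\phi\>.
\]
We commute $\n_t$ past $\n^k$. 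Each commutator $[\n_t,\n]$ produces two kinds of terms:
\begin{itemize}
\item the curvature of the pullback bundle, $R^M(\n_t\phi,\n\phi)$;
\item the term $\p_tA\cdot(\,\cdot\,)$ coming from the gauge action, where $\p_tA=(j-\ep)(D^*F+\phi^*D\phi)$.
\end{itemize}
This gives
\[
\n_t\n^k\phi=\n^k\n_t\phi+\text{lower order terms}.
\]
Next we commute $\n^k$ past $\tau_A$. The commutators produce $R^M$ terms that are multilinear in $\n\phi$, the curvature of $\Si$, and terms involving $F_A$. We write
\[
*F_A=\Psi-\mu(\phi)+c,
\]
so every $F_A$ term becomes a $\Psi$ term plus a term that is a smooth function of $\phi$. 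This is why the sum $\sum_{i+l=k}|\n^i\Psi||\n^l\phi|$ appears, and the $\mu(\phi)$ derivatives yield products of $\n^{i_q}\phi$. Similarly, we rewrite
\[
D^*F=-*D*F=-*D(\Psi-\mu+c),
\]
using $D\mu=d\mu\circ D\phi$, so $\p_tA$ is expressed through $\n\Psi$ and $\n\phi$.

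The top-order term is
\[
\int\<(J+\ep)\n^k\tau,\n^k\phi\>=\int\<(J+\ep)\operatorname{tr}\n^{k+1}\n\phi,\n^k\phi\>+\dots
\]
We integrate it by parts. The $J$-part is skew, since $\n J=0$ on the Kähler fiber, so it vanishes up to commutator terms. The $\ep$-part gives $-\ep\int|\n^{k+1}\phi|^2$. The commutator terms have total derivative count $k+2$ distributed among at least three factors, or the count stays $\le k$. Terms with a factor $\n^{k+1}\phi$ arising from commutators only appear multiplied by $\ep$. We absorb them into $-\ep\int|\n^{k+1}\phi|^2$ by Cauchy–Schwarz, or arrange them via the structure of $\tau$ so that no $\n^{k+1}$ survives; this is where the count $k+2$ with $s\ge3$ comes from. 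The potential term contributes the $\sum_{i_1+\cdots+i_s=k}$ terms and $C\norm{\n\phi}^2_{H^{k-1,2}}$, since $M$ is compact and all $\mu$-derivatives are bounded.

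\textbf{The estimate \eqref{eq-high-order-F}.} We first derive the evolution of $\Psi$. Compute
\[
\p_t\Psi=*D\p_tA+d\mu(\phi)\p_t\phi.
\]
For $\ep=0$ this vanishes by \eqref{eq:F-A}. In general, differentiating the conservation identity in $\ep$ leaves
\[
\p_t\Psi=-\ep\bigl(*D(D^*F+\phi^*D\phi)+d\mu(\tau-(\mu-c)\n\mu)\bigr).
\]
Using $*F=\Psi-\mu+c$ and \eqref{eq:momentum2}, we rewrite this as
\[
\p_t\Psi=-\ep D^*D\Psi+[\text{gauge/rotation terms in }\Psi]+\ep\,\n d\mu(\n\phi,\n\phi)+\ep\,\text{(zeroth-order terms)}.
\]
The crucial point is the identity
\[
*D(\phi^*D\phi)-d\mu(\tau)=\n d\mu(D\phi,D\phi)\text{-type terms}.
\]
This cancels the second-order derivative of $\phi$. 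It follows because $d\mu\circ J$ corresponds to $\phi^*$ via \eqref{eq:momentum2}; the non-cancelling $J$-rotated parts are absorbed into bracket terms $[\Psi,\cdot]$.

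We then apply $\n^k$, pair with $\n^k\Psi$, and integrate by parts to obtain $-\ep\int|\n^{k+1}\Psi|^2$. When $\n^k$ falls on $\ep\,\n d\mu(\n\phi,\n\phi)$, we obtain products with total order $k+2$ and each $i_l\le k+1$. The single term with $i_l=k+1$ is kept separately as $\ep|\n^{k+1}\phi||\n\phi||\n^k\Psi|$. The commutators of $\n_t$ and $\n^k$ again produce $\p_tA$ terms and curvature terms, which give the $\Psi\Psi\Psi$ and $\phi\cdots\Psi\Psi$ sums. The $\ep$-independent terms do not raise the derivative count, since after the cancellation only $\n^i\Psi$ with $i\le k$ times $\phi$-derivatives remain.

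\textbf{Main obstacle.} The hardest step is the exact algebraic identity for $\p_t\Psi$, namely checking that all terms with two derivatives on $\phi$ cancel except those carrying $\ep$. Doing this requires careful use of \eqref{eq:momentum}, \eqref{eq:momentum2} and \eqref{eq:momentum4}, in the sign conventions $-j=*$. A second delicate point is the careful bookkeeping ensuring that no $\n^{k+1}\phi$ appears in \eqref{eq-high-order-phi} outside the dissipative term; we would handle it by integrating by parts once more and using the skew-symmetry of $J$.
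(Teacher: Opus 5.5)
Your overall route matches the paper's Appendix~\ref{s-evolution-eq}. You derive evolution equations for $\Psi$, $\n^k\Psi$ and $\n^k\phi$ by induction from commutator and Bochner formulas, replace $*F_A$ by $\Psi-\mu(\phi)+c$, and pair with $\n^k\phi$ and $\n^k\Psi$. The $J$-part of the principal term then drops out because $J$ is skew and $\n J=0$. Your $\phi$-part is fine with one correction: you should not absorb $\n^{k+1}\phi$ terms by Cauchy--Schwarz, since that would spoil the exact coefficient $-\ep$ in \eqref{eq-high-order-phi}. That absorption is also unnecessary. No commutator or Bochner remainder carries more than $k$ derivatives on a single factor; the worst is $R^M\#\n\phi\#\n\phi\#\n^k\phi$. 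For $\p_t\Psi$, your use of the conservation law is a valid shortcut that the paper does not take. $\p_t\Psi=d\Psi(\p_tA,\p_t\phi)$ is linear in the velocity $-\mathcal{J}\n\mathcal{E}-\ep\n\mathcal{E}$, and $d\Psi$ annihilates the Hamiltonian part pointwise by gauge invariance, so only $-\ep\,d\Psi(\n\mathcal{E})$ survives. The paper instead cancels the $\ep$-free terms by hand, using $*D_A\dbar_A\phi=J\dbar_A^*\dbar_A\phi$, $d\mu(\phi)(\Psi\phi)=[\Psi,\mu(\phi)]$ and \eqref{eq:momentum4}.

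The gap is in the step you single out as crucial: the identity $*D(\phi^*D\phi)-d\mu(\tau)=$ ($\n d\mu(D\phi,D\phi)$-type terms) is false.
\begin{itemize}
\item $*D_A(\phi^*D_A\phi)$ contains no second derivatives of $\phi$. Since $\<\phi^*D_A\phi,\xi\>=h(X_\xi(\phi),D_A\phi)$, applying $D_A$ produces only the Killing skew part $h(\n_{D_i\phi}X_\xi,D_j\phi)$, which is of the form $\n d\mu(D_A\phi\wedge JD_A\phi)$, plus $h(X_\xi,D_A^2\phi)=h(X_\xi,F_A\cdot\phi)$. So your difference still contains $-d\mu(\tau)$.
\item The second-order cancellation actually comes from the Yang--Mills term. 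One has $*D_AD_A^*F_A=D_A^*D_A(*F_A)=D_A^*D_A\Psi-D_A^*D_A\mu(\phi)$. The chain rule $D_A^*D_A\mu(\phi)=d\mu(D_A^*D_A\phi)-\tr\,\n d\mu(D_A\phi,D_A\phi)$ then yields a $d\mu(\tau)$ that cancels the one from $d\mu(\p_t\phi)$.
\item Your displayed formula has a sign error. The $\ep$-part of $d\mu(\p_t\phi)$ is $+\ep\,d\mu(\tau-(\mu-c)\n\mu)$, not $-\ep\,d\mu(\cdots)$. With your sign the two $d\mu(\tau)$ terms add instead of cancelling.
\item The $d\mu(J(*F_A)\phi)$ piece of $*D(\phi^*D\phi)$ combines with $d\mu(J(\mu-c)\phi)$ into the zeroth-order term $d\mu(J\Psi\phi)$, not into bracket terms.
\end{itemize}
With these corrections you recover exactly \eqref{eq-F1}, and the rest of your argument for \eqref{eq-high-order-F} goes through. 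Your final schematic form of $\p_t\Psi$ was right, but not for the reason you gave.
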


Lemma \ref{eq-high-PYMHSF} follows directly from Lemmas \ref{eq-high-F} and \ref{eq-high-n_A-phi} in Appendix \ref{s-evolution-eq}.

Next we derive $\varepsilon$-independent a priori estimates for the perturbed YMHS flow. Let $(A,\phi)$ be a solution to of \eqref{eq-p-YMHS-0} with initial data $(A_0,\phi_0)$ on $\Sigma\times[0,T)$. For $k\ge 3$, we define
\[
 E_k(A,\phi)=\|\nabla \phi\|_{H^{k-1,2}}^2+\|\Psi\|_{H^{k,2}}^2.
\]
The estimates below use the Sobolev inequalities from Section~2.3 and lead to Theorem~\ref{main-thm2}.

\subsection{Uniform estimates for $E_3(A,\phi)$} 

By inequality \eqref{ineq-4}, we have
\[E_3(A,\phi)\leq C(\norm{\n\phi}^2_{L^2}+\norm{\Psi}^2_{L^2}+\norm{\n^3 \phi}^2_{L^2}+\norm{\n^3 \Psi}^2_{L^2}).\]
Since
\[ \frac{d}{dt}\E(A, \phi) = -\ep\int_\Si (|D ^*F  +\phi^*D \phi|^2+|D ^*D \phi+(\mu(\phi)-c)\n \mu(\phi)|^2),\]
the YMH energy is nonincreasing along the perturbed YMHS flow \eqref{eq-p-YMHS-0}. Moreover, the moment map is bounded since the fiber space $M$ is compact.
This immediately yields the basic $L^2$-estimate
\begin{equation}\label{es-E_2-1}
\norm{\n\phi}^2_{L^2}+\norm{\Psi}^2_{L^2}\leq C(\mathcal{E}(A_0,\phi_0)+1).
\end{equation}
Thus, it suffices to establish uniform estimates for $\norm{\n^3 \phi}_{L^2}+\norm{\n^3 \Psi}^2_{L^2}$.

\subsubsection{Uniform estimates for $\norm{\n^3 \phi}_{L^2}$} 
By the evolution inequality \eqref{eq-high-order-phi} with $k=3$, we have
\begin{equation}\label{es-phi-H-3}
\begin{aligned}
\frac{1}{2}\p_t \int_{\Si}|\n^3 \phi|^2dx\leq &-\ep\int_{\Si}|\n^4 \phi|^2dx+C\norm{\n\phi}^2_{H^{2,2}}+I_1+I_2+I_3,
\end{aligned}
\end{equation}

For $I_1$ on the right-hand side of \eqref{es-phi-H-3}, we have
\begin{align*}
I_1 =& C\sum_{i+l=3}\int_{\Si}|\n^i  \Psi||\n^l \phi||\n^{3}  \phi|dx\\
=& C\int_{\Si}|\n^3 \Psi||\n^3 \phi|dx+C\int_{\Si}|\n^2 \Psi||\n \phi||\n^3 \phi|dx\\
&+C\int_{\Si}|\n\Psi||\n^2 \phi||\n^3 \phi|dx+C\int_{\Si}|\Psi||\n^3 \phi|^2dx\\
\leq& C\norm{\n^3 \Psi}^2_{L^2}+C(1+\norm{\Psi}_{L^\infty})\norm{\n^3  \phi}^2_{L^2}\\
&+C\norm{\n\phi}_{L^\infty}\norm{\n^2 \Psi}_{L^2}\norm{\n^3 \phi}_{L^2}
+C\norm{\n\Psi}_{L^\infty}\norm{\n^2 \phi}_{L^2}\norm{\n^3 \phi}_{L^2}.
\end{align*}
Applying the Sobolev inequality \eqref{ineq-2} to bound the $L^\infty$-norms, we get
\[ I_1 \leq C(\norm{\n\phi}^2_{H^{2,2}}+\norm{\Psi}^2_{H^{3,2}}+1)^{3/2}. \]

For $I_2$, we have 
\begin{align*}
I_2= &C\sum_{\substack{i_1+\cdots+i_s=5,\\ s\geq 3,i_l\geq 1}}\int_{\Si}\prod_{q=1}^s|\n^{i_q} \phi||\n^3\phi|dx\\
= &C\int_{\Si}|\n\phi|^2|\n^3 \phi|^2dx+C\int_{\Si}|\n^2 \phi|^2|\n \phi||\n^3 \phi|dx\\
&+C\int_{\Si}|\n^2 \phi||\n\phi|^3|\n^3 \phi|dx+C\int_{\Si}|\n\phi|^5|\n^3 \phi|dx\\
\leq &C\norm{\n \phi}^2_{L^\infty}\norm{\n^3  \phi}^2_{L^2}+C\norm{\n \phi}_{L^\infty}\norm{\n^2\phi}^2_{L^4}\norm{\n^3  \phi}_{L^2}\\
&+C\norm{\n \phi}^3_{L^\infty}\norm{\n^2\phi}_{L^2}\norm{\n^3  \phi}_{L^2}+C\norm{\n \phi}^4_{L^\infty}\norm{\n\phi}_{L^2}\norm{\n^3  \phi}_{L^2}
\end{align*}
By \eqref{ineq-2} and the Sobolev interpolation inequality \eqref{G-N-ineq},
\begin{align*}
\norm{\n^2 \phi}_{L^2}+\norm{\n \phi}_{L^\infty}\leq &C\norm{\n\phi}^{1/2}_{H^{2,2}}\norm{\n \phi}^{1/2}_{L^2},\\
\norm{\n^2  \phi}_{L^4}\leq &C\norm{\n\phi}^{3/4}_{H^{2,2}}\norm{\n \phi}^{1/4}_{L^2}.
\end{align*}
It follows
\[ I_2 \leq C(\norm{\n\phi}^2_{H^{2,2}}+1)^{3/2}.\]

Similarity, for $I_3$, we have
\begin{align*}
I_3=&C\sum_{i_1+\cdots+i_s=3, i_l\geq 1}\int_{\Si}\prod_{q=1}^s|\n^{i_q} \phi||\n^3\phi|dx\\
\leq &C\int_{\Si}|\n^3 \phi|^2dx+\int_{\Si}|\n^2 \phi||\n\phi||\n^3 \phi|dx+\int_{\Si}|\n\phi|^3|\n^3 \phi|dx\\
\leq &C \norm{\n^3 \phi}^2_{L^2}+ C\norm{\n\phi}_{L^\infty}\norm{\n^2 \phi}_{L^2}\norm{\n^3 \phi}_{L^2}\\
&+C\norm{\n\phi}^2_{L^\infty}\norm{\n\phi}_{L^2}\norm{\n^3 \phi}_{L^2}\\
\leq & C(\norm{\n \phi}^2_{L^2}+1)\norm{\n\phi}^2_{H^{2,2}}\leq C\norm{\n\phi}^2_{H^{2,2}}.
\end{align*}

Substituting the above estimates for $I_1$-$I_3$ into \eqref{es-phi-H-3}, we obtain
\begin{equation}\label{es-H-3-phi-1}
\p_t \int_{\Si}|\n^3 \phi|^2dx+2\ep\int_{\Si}|\n^4 \phi|^2dx\leq C(\norm{\n\phi}^2_{H^{2,2}}+\norm{\Psi}^2_{H^{3,3}}+1)^{3/2}.
\end{equation}

\subsubsection{Uniform estimates for $\norm{\n^3  \Psi}_{L^2}$} Applying \eqref{eq-high-order-F} with $k=3$, we get
\begin{equation}\label{es-H-3-F}
\begin{aligned}
\frac{1}{2}\p_t \int_{\Si}|\n^3  \Psi|^2dx\leq &-\ep\int_{\Si}|\n^{4} \Psi|^2dx+C\norm{\Psi}^2_{H^{3,2}}+M_1+M_2+M_3+M_4.
\end{aligned}
\end{equation}

Using \eqref{ineq-2}, we estimate the terms $M_1$--$M_3$ on the right-hand side of \eqref{es-H-3-F} as follows.
\begin{align*}
M_1=&C\ep\int_{\Si}|\n^{4} \phi||\n\phi||\n^3 \Psi|dx\\
\leq& \ep/2 \|\n^4 \phi\|^2_{L^2}
+C\ep\|\n\phi\|^2_{L^\infty}\|\n^3 \Psi\|^2_{L^2}\\
\leq &\ep/2 \|\n^4 \phi\|^2_{L^2}
+C\ep\|\n\phi\|_{L^2}\|\n\phi\|_{H^{2,2}}
\|\n^3 \Psi\|^2_{L^2}\\
\leq &\ep/2 \|\n^4 \phi\|^2_{L^2}
+C\ep(\|\n\phi\|^2_{H^{2,2}}
+\|\n^3 \Psi\|^2_{L^2}+1)^{3/2}.\\
M_2=&C\sum_{i+l=3}\int_{\Si}|\n^i \Psi||\n^l \Psi||\n^3 \Psi|dx\\
\leq &C\int_{\Si}|\Psi||\n^3 \Psi|^2dx
+C\int_{\Si}|\n^2 \Psi||\n\Psi||\n^3 \Psi|dx\\
\leq &C\|\Psi\|_{L^\infty}\|\n^3 \Psi\|^2_{L^2}
+\|\n\Psi\|_{L^\infty}\|\n^2 \Psi\|_{L^2}\|\n^3 \Psi\|_{L^2}\\
\leq &C(\|\Psi\|^2_{H^{3,2}}+1)^{3/2}.\\
M_3=&C\sum_{i_1+\cdots+i_s+i=3,i_l\geq1}
\int_{\Si}\prod_{q=1}^s|\n^{i_q} \phi||\n^i \Psi||\n^3 \Psi|dx\\
\leq &C\int_{\Si}|\n^2 \Psi||\n\phi||\n^3 \Psi|dx\\
&+\int_{\Si}|\n\Psi|(|\n^2 \phi|+|\n\phi|^2)
|\n^3 \Psi|dx\\
&+\int_{\Si}|\Psi|(|\n^3 \phi|
+|\n^2 \phi||\n\phi|
+|\n\phi|^3)|\n^3 \Psi|dx\\
\leq &C\|\n\phi\|_{L^\infty}
\|\n^2 \Psi\|_{L^2}\|\n^3 \Psi\|_{L^2}\\
&+C\|\n\Psi\|_{L^\infty}
(\|\n^2 \phi\|_{L^2}
+\|\n\phi\|_{L^\infty}\|\n\phi\|_{L^2})
\|\n^3 \Psi\|_{L^2}\\
&+C\|\Psi\|_{L^\infty}
\|\n\phi\|_{L^\infty}
(\|\n^2 \phi\|_{L^2}
+\|\n\phi\|_{L^2})
\|\n^3 \Psi\|_{L^2}\\
&+C\|\Psi\|_{L^\infty}
\|\n^3 \phi\|_{L^2}
\|\n^3 \Psi\|_{L^2}\\
\leq &C(\|\n\phi\|_{L^2}
+\|\Psi\|_{L^2}+1)
(\|\n\phi\|^2_{H^{2,2}}
+\|\Psi\|^2_{H^{3,2}}+1)^{3/2}\\
\leq& C(\|\n\phi\|^2_{H^{2,2}}
+\|\Psi\|^2_{H^{3,2}}+1)^{3/2}.
\end{align*}
It remains to control the term $M_4$, which we divide into three terms by
\begin{align*}
M_4=&C\ep\sum_{\substack{i_1+\cdots+i_s=5,\ s\geq 2,1\leq i_l\leq 3}}
\int_{\Si}\prod_{q=1}^s|\n^{i_q} \phi||\n^3 \Psi|dx\\
\leq &C\ep\int_{\Si}|\n^3 \phi|(|\n^2 \phi|+|\n\phi|^2)|\n^3 \Psi|dx\\
&+C\ep\int_{\Si}(|\n^2 \phi|^2|\n\phi|
+|\n^2 \phi||\n\phi|^3)|\n^3 \Psi|dx\\
&+C\ep\int_{\Si}|\n\phi|^5|\n^3 \Psi|dx\\
= &N_1+N_2+N_3.
\end{align*}
The terms $N_1$-$N_3$ satisfy the following estimates:
\begin{align*}
N_1\leq &C\ep \norm{\n^3  \phi}_{L^2}
\norm{\n^2  \phi}_{L^4}
\norm{\n^3 \Psi}_{L^4}\\
&+C\ep \norm{\n \phi}^2_{L^\infty}
\norm{\n^3 \phi}_{L^2}
\norm{\n^3 \Psi}_{L^2}\\
\leq &C\ep\norm{\n \phi}^{2}_{H^{2,2}}
\norm{\n^3 \Psi}_{H^{1,2}}\\
&+C\ep\norm{\n\phi}_{L^2}
\norm{\n\phi}^2_{H^{2,2}}
\norm{\n^3 \Psi}_{L^2}\\
\leq &C\ep(\norm{\n \phi}^2_{L^2}+1)
(\norm{\n \phi}^2_{H^{2,2}}
+\norm{\Psi}^2_{H^{3,2}}+1)^{2}
+\ep/2\norm{\n^4 \Psi}^2_{L^2},
\\
N_2\leq &C\ep\norm{\n \phi}_{L^\infty}
\norm{\n^2 \phi}^2_{L^4}
\norm{\n^3 \Psi}_{L^2}
+C\ep\norm{\n \phi}^3_{L^\infty}
\norm{\n^2 \phi}_{L^2}
\norm{\n^3 \Psi}_{L^2}\\
\leq& C\ep(\norm{\n \phi}^2_{L^2}+1)
(\norm{\n \phi}^2_{H^{2,2}}
+\norm{\Psi}^2_{H^{3,2}}+1)^{2},
\\
N_3\leq &C\ep\norm{\n\phi}^4_{L^\infty}
\norm{\n\phi}_{L^2}
\norm{\n^3 \Psi}_{L^2}\\
\leq &C\ep(\norm{\n \phi}^2_{L^2}+1)
(\norm{\n \phi}^2_{H^{2,2}}
+\norm{\Psi}^2_{H^{3,2}}+1)^{2}.
\end{align*}
Consequently,
\begin{align*}
|M_4|\leq C\ep
(\norm{\n \phi}^2_{H^{2,2}}
+\norm{\Psi}^2_{H^{3,2}}+1)^{2}
+\ep/2\norm{\n^4 \Psi}^2_{L^2}.
\end{align*}
Substituting the above estimates for $M_1$-$M_4$ into \eqref{es-H-3-F}, we obtain
\begin{equation}\label{es-H-3-F-1}
\p_t \int_{\Si}|\n^3 \Psi|^2dx
+\ep\int_{\Si}|\n^4 \Psi|^2dx
\leq C(\norm{\n \phi}^2_{H^{2,2}}
+\norm{\Psi}^2_{H^{3,2}}+1)^{2}
+\ep/2\norm{\n^4  \phi}^2_{L^2}.
\end{equation}

Now, combining \eqref{es-H-3-phi-1} and \eqref{es-H-3-F-1}, we obtain
\begin{align*}
\p_t \int_{\Si}(|\n^3 \phi|^2+|\n^3 \Psi|^2)dx
\leq C(\norm{\n \phi}^2_{H^{2,2}}
+\norm{\Psi}^2_{H^{3,2}}+1)^{2}.
\end{align*}
Applying inequality \eqref{ineq-4} together with the lower-order estimate 
\eqref{es-E_2-1}, we obtain
\begin{equation}\label{eq-E_2}
\p_t \int_{\Si}\left(|\n^3 \phi|^2+|\n^3 \Psi|^2\right)dx
\leq C\left(\int_{\Si}\left(|\n^3 \phi|^2+|\n^3 \Psi|^2\right)dx+1\right)^2.
\end{equation}
Therefore, by the standard comparison principle for ordinary differential 
inequalities, it follows from \eqref{eq-E_2} that there exist positive 
constants $T_0$ and $C_3$, depending only on
\[
\norm{\n_{A_0}\phi_0}^2_{H^{2,2}}
+\norm{\Psi_0}^2_{H^{3,2}}
\]
and the geometry of $M$ and $\Si$, such that
\begin{equation}\label{es-3-order}
\sup_{0\leq t\leq \min\{T_0,T\}}E_3(A,\phi)\leq C_3.
\end{equation}

\subsection{Uniform estimates for $E_k(A,\phi)$ with $k> 3$}
Now we derive uniform bounds for the higher-order energies  $E_k(A,\phi)$ where $k> 3$. Namely, we show by induction that there exists a constant $C_k$, depending only on the initial energy $E_k(A_0,\phi_0)$ and the geometry of $\Si$ and $M$, such that
\begin{equation}\label{es-n-order-energy}
\sup_{0\leq t\leq \min\{T_0,T\}}E_k(A,\phi)\leq C_k.
\end{equation}

The case $k=3$ has already been established in \eqref{es-3-order}. To prove it for $k\geq 4$, we assume by induction that \eqref{es-n-order-energy} holds for every order $\le k-1$. 
Since by definition,
\[E_{k}(A,\phi)=E_{k-1}(A,\phi)+\norm{\n^k \phi}^2_{L^2}+\norm{\n^k \Psi}^2_{L^2}.\]
it suffices to prove a uniform bound for
\[\norm{\n^k \phi}^2_{L^2}+\norm{\n^k \Psi}^2_{L^2}.\]

\subsubsection{Uniform estimates for $\norm{\n^k \phi}_{L^2}$}
By \eqref{eq-high-order-phi}, we have
\begin{equation}\label{es-H-order-phi}
\begin{aligned}
	\frac{1}{2}\p_t \int_{\Si}|\n^k \phi|^2dx\leq-\ep\int_{\Si}|\n^{k+1} \phi|^2dx+C\norm{\n \phi}^2_{H^{k-1,2}}+K_1+K_2+K_3.
\end{aligned}
\end{equation}
We estimate the terms $K_1$-$K_3$ using Sobolev inequalities \eqref{ineq-1}-\eqref{ineq-3} as follows. For $K_1$, have
\begin{align*}
K_1=&C\sum_{i+l=k}\int_{\Si}|\n^i  \Psi||\n^l \phi||\n^{k}  \phi|dx\\*
\leq &C\int_{\Si}|\n^k  \Psi||\n^k  \phi|dx
+C\int_{\Si}|\Psi||\n^k  \phi|^2dx\\
&+C\int_{\Si}|\n^{k-1}  \Psi||\n \phi||\n^k  \phi|dx
+C\int_{\Si}|\n \Psi||\n^{k-1}  \phi||\n^k  \phi|dx\\*
&+C\int_{\Si}|\n^{k-2}  \Psi||\n^2  \phi||\n^k  \phi|dx
+C\int_{\Si}|\n^2  \Psi||\n^{k-2}  \phi||\n^k  \phi|dx\\*
&+C\sum_{i+l=k, i,l\leq k-3}
\int_{\Si}|\n^i  \Psi||\n^l \phi||\n^k  \phi|dx\\
\leq &C\norm{\n^k \Psi}_{L^2}\norm{\n^k \phi}_{L^2}
+C\norm{\Psi}_{L^\infty}\norm{\n^k \phi}^2_{L^2}\\
&+C\norm{\n\phi}_{L^\infty}
\norm{\n^{k-1} \Psi}_{L^2}\norm{\n^k \phi}_{L^2}
+C\norm{\n\Psi}_{L^\infty}
\norm{\n^{k-1} \phi}_{L^2}\norm{\n^k \phi}_{L^2}\\
&+C\norm{\n^2 \phi}_{L^4}
\norm{\n^{k-2} \Psi}_{L^4}
\norm{\n^k \phi}_{L^2}
+C\norm{\n^2 \Psi}_{L^4}
\norm{\n^{k-2} \phi}_{L^4}
\norm{\n^k \phi}_{L^2}\\
&+C\sum_{i+l=k, i,l\leq k-3}
\norm{\n^i \Psi}_{L^\infty}
\norm{\n^l \phi}_{L^2}
\norm{\n^k \phi}_{L^2}\\*
\leq &C(E_{k-1}(A,\phi)+1)
(\norm{\n\phi}^2_{H^{k-1,2}}
+\norm{\Psi}^2_{H^{k,2}}),
\end{align*}

For $K_2$, we have
\begin{align*}
K_2=&C\sum_{\substack{i_1+\cdots+i_s=k+2,\\ s\geq 3,i_l\geq 1}}\int_{\Si}\prod_{q=1}^s|\n^{i_q} \phi||\n^k\phi|dx\\
=&C\int_{\Si}\big(|\n\phi|^2|\n^k \phi|^2+(|\n\phi||\n^2 \phi|+|\n\phi|^3)|\n^{k-1} \phi||\n^k\phi|\big)dx\\
&+C\sum_{\substack{i_1+\cdots+i_s=k+2,\\ s\geq 3,\, \max_q i_q=k-2 }}\int_{\Si}\prod_{q=1}^s|\n^{i_q} \phi||\n^k\phi|dx\\
&+C\sum_{\substack{i_1+\cdots+i_s=k+2,\\ s\geq 3,\, i_l\leq k-3 }}\int_{\Si}\prod_{q=1}^s|\n^{i_q} \phi||\n^k\phi|dx\\
=:&L_1+L_2+L_3.
\end{align*}
Here, the term $L_1$ can be estimated as follows:
\begin{align*}
L_1
&\leq C(\norm{\n\phi}^2_{L^\infty}
+\norm{\n\phi}^3_{L^\infty})
\norm{\n\phi}^2_{H^{k-1,2}}\\
&\quad
+C\norm{\n\phi}_{L^\infty}
\norm{\n^2 \phi}_{L^4}
\norm{\n^{k-1} \phi}_{L^4}
\norm{\n^k \phi}_{L^2}\\
&\leq
C(\norm{\n\phi}^3_{H^{2,2}}+1)
\norm{\n\phi}^2_{H^{k-1,2}} .
\end{align*}
Next, we estimate $L_2$. For a typical summand of $L_2$, let
\[
I=\{q:i_q=k-2\},
\]
and denote $r=|I|\geq1$. Thus, $I$ represents the set of exceptional
indices for which the highest derivatives occur, while
\[
i_q\leq k-3,\qquad q\notin I .
\]
By the Sobolev embedding inequalities \eqref{ineq-1} and \eqref{ineq-2},
together with Hölder's inequality, we obtain
\begin{align*}
\int_{\Si}\prod_{q=1}^{s}|\n^{i_q}\phi|
|\n^k\phi|dx
&\leq
C\prod_{q\in I}
\norm{\n^{k-2}\phi}_{L^{2r}}
\prod_{q\notin I}
\norm{\n^{i_q}\phi}_{L^\infty}
\norm{\n^k\phi}_{L^2}\\
&\leq
C\norm{\n\phi}^{s}_{H^{k-2,2}}
\norm{\n^k\phi}_{L^2}.
\end{align*}
Applying Young's inequality and using the fact that $s\leq k+2$, we
deduce that
\[
L_2\leq
C(\norm{\n\phi}^{k+2}_{H^{k-2,2}}+1)
\bigl(\norm{\n^k\phi}^2_{L^2}+1\bigr).
\]
For $L_3$, since every factor $\n^{i_q}\phi$ satisfies
$i_q\leq k-3$, the Sobolev embedding implies that
\[
\n^{i_q}\phi\in L^\infty .
\]
Therefore, applying H\"older's inequality and Young's inequality again,
we obtain
\[
L_3\leq
C(\norm{\n\phi}^{k+2}_{H^{k-2,2}}+1)
\bigl(\norm{\n^k\phi}^2_{L^2}+1\bigr).
\]
Combining the above estimates for $L_1$, $L_2$, and $L_3$, we conclude
that
\begin{align*}
|K_2|
\leq
C(\norm{\n\phi}^{k+2}_{H^{k-2,2}}+1)
\bigl(\norm{\n^k\phi}^2_{L^2}+1\bigr).
\end{align*}

By applying the same argument to the term $K_3$, we obtain
\begin{align*}
|K_3|
\leq
C(\norm{\n\phi}^{k}_{H^{k-2,2}}+1)
\norm{\n^k\phi}^2_{L^2}.
\end{align*}
Therefore, substituting the estimates for $K_1$-$K_3$ into \eqref{es-H-order-phi}, we get
\begin{equation}\label{es-H-order-phi1}
\p_t \int_{\Si}|\n^k \phi|^2dx+2\ep\int_{\Si}|\n^{k+1} \phi|^2dx\leq C_k(\norm{\n^k \phi}^2_{L^2}+\norm{\Psi}^2_{H^{k,2}}+1),
\end{equation}

\subsubsection{Uniform estimates for $\norm{\n^k \Psi}$}
Applying \eqref{eq-high-order-F}, we obtain
\begin{equation}\label{es-H-order-F}
\begin{aligned}
\frac{1}{2}\p_t\int_{\Si}|\n^k \Psi|^2dx\leq &-\ep\int_{\Si}|\n^{k+1} \Psi|^2dx+C\norm{\Psi}^2_{H^{k,2}}+P_1+P_2+P_3+P_4.
\end{aligned}
\end{equation}

Next, we estimate the terms $P_1$-$P_4$ using Sobolev inequalities \eqref{ineq-1}-\eqref{ineq-3} as follows. For $P_1$, we have
\begin{align*}
P_1=&C\ep\int_{\Si}|\n^{k+1} \phi||\n\phi||\n^k \Psi|dx\\
\leq &C\ep\norm{\n \phi}_{L^\infty}
\norm{\n^{k+1} \phi}_{L^2}
\norm{\n^k \Psi}_{L^2}\\*
\leq &C\ep \norm{\n\phi}^2_{H^{2,2}}
\norm{\n^k \Psi}^2_{L^2}
+\ep/4\norm{\n^{k+1} \phi}^2_{L^2}.
\end{align*}
For $P_2$, we have
\begin{align*}
P_2=&C\sum_{i+l=k}\int_{\Si}|\n^i \Psi||\n^l \Psi||\n^k \Psi|dx\\
\leq &C\int_{\Si}|\Psi||\n^k \Psi|^2dx
+C\int_{\Si}|\n\Psi||\n^{k-1} \Psi||\n^k \Psi|dx\\
&+C\int_{\Si}|\n^2 \Psi||\n^{k-2} \Psi||\n^k \Psi|dx\\
&+C\sum_{i+l=k,i,l\leq k-3}\int_{\Si}|\n^i \Psi||\n^l \Psi||\n^k \Psi|dx\\
\leq &C\norm{\Psi}_{L^\infty}
\norm{\n^k \Psi}^2_{L^2}
+C\norm{\n\Psi}_{L^\infty}
\norm{\n^{k-1} \Psi}_{L^2}
\norm{\n^k \Psi}_{L^2}\\*
&+C\norm{\n^2 \Psi}_{L^4}
\norm{\n^{k-2} \Psi}_{L^4}
\norm{\n^k \Psi}_{L^2}\\*
&+C\sum_{i+l=k,i,l\leq k-3}
\norm{\n^i \Psi}_{L^\infty}
\norm{\n^l \Psi}_{L^2}
\norm{\n^k \Psi}_{L^2}\\*
\leq & C\norm{\Psi}_{H^{k-1,2}}
\norm{\Psi}^2_{H^{k,2}}.
\end{align*}
For $P_3$, we have
\begin{align*}
P_3=&C\sum_{i_1+\cdots+i_s+i=k,i_l\geq 1}
\int_{\Si}\prod_{q=1}^s|\n^{i_q} \phi|
|\n^i \Psi||\n^k \Psi|dx\\*
\leq& C\int_{\Si}|\Psi||\n^k\phi||\n^k \Psi|dx
+C\int_{\Si}|\n^{k-1} \Psi||\n\phi|
|\n^k \Psi|dx\\
&+C\int_{\Si}|\n^{k-1} \phi|
(|\n\phi||\Psi|+|\n\Psi|)
|\n^k \Psi|dx\\
&+C\int_{\Si}|\n^{k-2} \Psi|
(|\n\phi|^2+|\n^2 \phi|)
|\n^k \Psi|dx\\
&+C\int_{\Si}|\n^{k-2} \phi|
(|\n\phi||\n\Psi|
+|\n\phi|^2|\Psi|
+|\n^2 \phi||\Psi|)
|\n^k \Psi|dx\\
&+C\sum_{\substack{i_1+\cdots+i_s+i=k,i_l,i\leq k-3}}
\int_{\Si}\prod_{q=1}^s|\n^{i_q} \phi||\n^i \Psi|
|\n^k \Psi|dx\\
\leq &C(\norm{\n\phi}^2_{H^{2,2}}
+\norm{\Psi}^2_{H^{3,2}}+1)
(\norm{\n\phi}^2_{H^{k-1,2}}
+\norm{\Psi}^2_{H^{k,2}})\\
&+C\sum_{\substack{i_1+\cdots+i_s+i=k,i_l,i\leq k-3}}
\prod_{q=1}^s
\norm{\n^{i_q} \phi}_{L^\infty}
\norm{\n^i \Psi}_{L^2}
\norm{\n^k \Psi}_{L^2}\\*
\leq &C(\norm{\n\phi}^{2(k-1)}_{H^{k-2,2}}
+\norm{\Psi}^2_{H^{k-1,2}}+1)
(\norm{\n\phi}^2_{H^{k-1,2}}
+\norm{\Psi}^2_{H^{k,2}})\\
\leq &C(\norm{\n\phi}^2_{H^{k-1,2}}+\norm{\Psi}^2_{H^{k,2}}).
\end{align*}

It remains to estimate $P_4$. A direct computation gives
\begin{align*}
P_4=&C\ep\sum_{\substack{i_1+\cdots+i_s=k+2,\\ s\geq2,\,1\leq i_l\leq k}}
\int_{\Si}\prod_{q=1}^s|\n^{i_q} \phi||\n^k \Psi|dx\\
\leq &C\ep\int_{\Si}|\n^k \phi|
(|\n^2 \phi|+|\n\phi|^2)|\n^k \Psi|dx\\
&+C\ep\int_{\Si}|\n^{k-1} \phi|
(|\n\phi|^3+|\n^2 \phi||\n\phi|)
|\n^k \Psi|dx\\
&+C\ep
\sum_{\substack{i_1+\cdots+i_s=k+2,\ s\geq2\\
\max_q i_q=k-2}}
\int_{\Si}\prod_{q=1}^s|\n^{i_q}\phi|
|\n^k \Psi|dx\\
&+C\ep
\sum_{\substack{i_1+\cdots+i_s=k+2,\ s\geq2\\
\max_q i_q\leq k-3}}
\int_{\Si}\prod_{q=1}^s|\n^{i_q}\phi|
|\n^k \Psi|dx\\
=&Q_1+Q_2+Q_3+Q_4 .
\end{align*}
The first two terms can be estimated directly. Indeed,
\begin{align*}
Q_1\leq&
C\ep\|\n^k \phi\|_{L^2}
\|\n^2 \phi\|_{L^4}
\|\n^k \Psi\|_{L^4}
+C\ep\|\n\phi\|^2_{L^\infty}
\|\n^k \phi\|_{L^2}
\|\n^k \Psi\|_{L^2}\\
\leq&
C\ep\|\n\phi\|_{H^{2,2}}
\|\n^k \phi\|_{L^2}
\|\n^k \Psi\|_{H^{1,2}}
+C\ep\|\n\phi\|^2_{H^{2,2}}
\|\n^k \phi\|_{L^2}
\|\n^k \Psi\|_{L^2}\\
\leq&
C\ep\|\n\phi\|^2_{H^{2,2}}
\bigl(\|\n\phi\|^2_{H^{k-1,2}}
+\|\Psi\|^2_{H^{k,2}}\bigr)
+\frac{\ep}{4}\|\n^{k+1} \Psi\|^2_{L^2},
\end{align*}
where we have used Young's inequality. Similarly,
\begin{align*}
Q_2\leq&
C\ep
\bigl(
\|\n\phi\|^3_{L^\infty}
\|\n^{k-1} \phi\|_{L^2}
+\|\n\phi\|_{L^\infty}
\|\n^{k-1} \phi\|_{L^4}
\|\n^2 \phi\|_{L^4}
\bigr)
\|\n^k \Psi\|_{L^2}\\
\leq&
C\ep
(\|\n\phi\|^3_{H^{2,2}}+1)
(\|\n\phi\|^2_{H^{k-1,2}}
+\|\Psi\|^2_{H^{k,2}}).
\end{align*}
It remains to treat $Q_3+Q_4$.  For a typical summand of $Q_3$,
let
\[
I=\{q:i_q=k-2\},\qquad r=|I|\geq1 .
\]
The set $I$ may contain one or several indices, including the case
$I=\{1,\ldots,s\}$; hence no factor is counted repeatedly. Applying the
Sobolev inequalities \eqref{ineq-1} and \eqref{ineq-2}, together with
Hölder's inequality, we obtain
\begin{align*}
\int_{\Si}\prod_{q=1}^s|\n^{i_q}\phi|
|\n^k \Psi|dx
&\leq
C\prod_{q\in I}
\|\n^{k-2}\phi\|_{L^{2r}}
\prod_{q\notin I}
\|\n^{i_q}\phi\|_{L^\infty}
\|\n^k \Psi\|_{L^2}\\
&\leq
C\|\n\phi\|^s_{H^{k-2,2}}
\|\n^k \Psi\|_{L^2}.
\end{align*}
For a typical summand of $Q_4$, choose an arbitrary index
$q_*\in\{1,\ldots,s\}$. We put the factor
$\n^{i_{q_*}}\phi$ and the highest-order term $\n^k\Psi$ in
$L^2$, and estimate all remaining factors in $L^\infty$. Since
$i_q\leq k-3$, the Sobolev embedding gives
\begin{align*}
\int_{\Si}\prod_{q=1}^s|\n^{i_q}\phi|
|\n^k \Psi|dx
&\leq
C\|\n^{i_{q_*}}\phi\|_{L^2}
\prod_{q\neq q_*}\|\n^{i_q}\phi\|_{L^\infty}
\|\n^k \Psi\|_{L^2}\\
&\leq
C\|\n\phi\|^s_{H^{k-2,2}}
\|\n^k \Psi\|_{L^2}.
\end{align*}
Therefore, by Young's inequality and the constraint $s\leq k+2$, we have
\[
Q_3+Q_4
\leq
C\ep(\|\n\phi\|^2_{H^{k-2,2}}+1)^{k+1}
\bigl(
\|\n\phi\|^2_{H^{k-2,2}}
+\|\n^k \Psi\|^2_{L^2}
+1
\bigr).
\]
Combining the above estimates for $Q_1$--$Q_4$, we obtain
\begin{align*}
|P_4|
\leq&
C\ep(E_{k-1}(A,\phi)+1)^{k+1}
(\|\n\phi\|^2_{H^{k-1,2}}
+\|\Psi\|^2_{H^{k,2}}+1)\\
&+\frac{\ep}{4}\|\n^{k+1} \Psi\|^2_{L^2}.
\end{align*}

Substituting the above estimates for $P_1$--$P_4$ into
\eqref{es-H-order-F}, we obtain
\begin{equation}\label{es-H-order-F1}
\begin{aligned}
&\p_t\int_{\Si}|\n^k \Psi|^2dx
+\ep\int_{\Si}|\n^{k+1} \Psi|^2dx\\
\leq&
C(\|\n\phi\|^2_{H^{k-1,2}}
+\|\Psi\|^2_{H^{k,2}}+1)
+\frac{\ep}{2}\|\n^{k+1} \phi\|^2_{L^2}\\
\leq&
C(\|\n^k \phi\|^2_{L^2}
+\|\n^k \Psi\|^2_{L^2}+1)
+\frac{\ep}{2}\|\n^{k+1} \phi\|^2_{L^2},
\end{aligned}
\end{equation}
where we have used the induction assumption
\[
E_{k-1}(A,\phi)
=
\|\n\phi\|^2_{H^{k-2,2}}
+\|\Psi\|^2_{H^{k-1,2}}
\leq C_k .
\]

Finally, combining \eqref{es-H-order-phi1} and
\eqref{es-H-order-F1}, we obtain
\begin{align*}
\p_t\int_{\Si}
(|\n^k \phi|^2+|\n^k \Psi|^2)dx
\leq
C(\|\n^k \phi\|^2_{L^2}
+\|\n^k \Psi\|^2_{L^2}+1).
\end{align*}
Gronwall's inequality yields
\begin{align*}
\sup_{0\leq t\leq\min\{T_0,T\}}
E_k(A,\phi)
\leq C(T_0,E_k(A_0,\phi_0)).
\end{align*}

To summarize, we obtain the following result.
\begin{thm}\label{uniform-es}
Let $(A_0,\phi_0)$ be smooth initial data and let $0<\ep\leq1$.  Then there exists a positive constant $T_0$, depending only on $E_3(A_0,\phi_0)$ and the geometry of $M$ and $\Si$, such that if $(A,\phi)$ is a maximal smooth solution of the perturbed YMHS flow \eqref{eq-p-YMHS-0} on $\Si\times [0,T)$ with initial data $(A_0,\phi_0)$, then
\[T>T_0,\]
and for any $k\geq 3$, we have
\begin{equation}\label{uniform-es-p-YMHS}
\sup_{0\leq t\leq T_0}E_k(A,\phi)\leq C(T_0, E_{k}(A_0,\phi_0)),
\end{equation}
where the constant $C(T_0, E_{k}(A_0,\phi_0))$ is independent of $\ep$.
\end{thm}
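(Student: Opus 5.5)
The theorem packages the a priori estimates of the preceding subsections with a continuation argument, so the plan is a bootstrap.

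Let $(A,\phi)$ be the maximal smooth solution of \eqref{eq-p-YMHS-0} on $[0,T)$ given by Theorem~\ref{loc-ex-perburbed-YMHSF}. First I take $T_0$ from the ODE comparison for \eqref{eq-E_2}. Setting $y(t)=\int_\Si(|\n^3\phi|^2+|\n^3\Psi|^2)\,dx$, we have $y'\le C(y+1)^2$ with $C$ independent of $\ep\in(0,1]$. Indeed, every $\ep$-dependent term was either absorbed into the dissipation $-\ep\|\n^4\cdot\|_{L^2}^2$ or bounded by $C\ep\le C$. Hence $y(t)+1\le (y(0)+1)/(1-C(y(0)+1)t)$, and I choose $T_0=1/(2C(y(0)+1))$. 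Combined with the energy bound \eqref{es-E_2-1} and the interpolation \eqref{ineq-4}, this gives \eqref{es-3-order} on $[0,\min\{T_0,T\})$ with a constant depending only on $E_3(A_0,\phi_0)$ and the geometry.

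Next, for $k\ge4$ I run the induction of the previous subsection. The inequalities \eqref{es-H-order-phi1} and \eqref{es-H-order-F1} add up, the $\frac{\ep}{2}\|\n^{k+1}\phi\|^2$ term being absorbed by $2\ep\|\n^{k+1}\phi\|^2$. This gives a linear differential inequality whose coefficients depend only on $C_{k-1}$, and Gronwall yields \eqref{uniform-es-p-YMHS} on $[0,\min\{T_0,T\})$ for every $k$, with constants independent of $\ep$.

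The main step is to show $T>T_0$. Suppose instead $T\le T_0$. Then every $E_k(A,\phi)$ stays bounded up to $T$, but this controls only gauge-covariant quantities ($\n_A\phi$ and $\Psi$ measured with the moving connection), not $A$ itself. To continue the solution I need bounds on $a=A-A_0$ in fixed norms $H^{k}_{A_0}$ and on $\phi$ in $W^{k,2}_{A_0}$.
\begin{itemize}
\item For the connection, I use $\p_tA=(j-\ep I)(D^*F+\phi^*D\phi)$. On a surface $F=(\Psi-\mu(\phi)+c)\,\omega$, and $M$ is compact, so $\|\p_t A\|_{H^{k-1,2}_A}$ is controlled by $E_k$. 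Integrating in time and converting norms with Lemma~\ref{equiv-es-a} bounds $\|a\|_{H^{k-1,2}_{A_0}}$ on $[0,T)$ for $T\le T_0<\infty$.
\item For the section, Lemma~\ref{equiv-es-f} and Lemma~\ref{equiv-norm} convert the intrinsic bounds on $\phi$ into bounds on $\|\phi\|_{W^{k,2}_{A_0}}$.
\end{itemize}
With all fixed-background norms bounded, the solution has smooth limits as $t\to T$, and restarting Theorem~\ref{loc-ex-perburbed-YMHSF} from time $T$ contradicts maximality. I expect this step to be the delicate one.

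The remaining care is to run the contradiction in the correct gauge. Theorem~\ref{loc-ex-perburbed-YMHSF} produces solutions via the De Turck flow plus the gauge $s(t)$, whereas the bounds above are for the original gauge. Since every step above is gauge covariant, they transfer without change, and the smooth limit at $T$ can be fed back into the De Turck construction.
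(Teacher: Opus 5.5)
Your proposal is correct and follows the paper's proof. You obtain $\ep$-independent bounds on $[0,\min\{T,T_0\})$ from the ODE comparison for \eqref{eq-E_2} and the Gronwall induction for $E_k$, and then derive a contradiction with maximality when $T\le T_0$. Your continuation step spells out what the paper compresses into ``Theorem~\ref{Sob-embd} and the standard continuation criterion'': you integrate $\p_tA$ in time to get fixed-background bounds on $a$ and $\phi$, then restart Theorem~\ref{loc-ex-perburbed-YMHSF} at the smooth limit. Moving $\p_tA$ between $A$- and $A_0$-norms needs lower-order control of $a$, so bound $a$ order by order or via Gronwall. Also, no gauge transfer is needed, since the restart already produces a solution of \eqref{eq-p-YMHS-0}.
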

\begin{proof}
Recall that we have already shown that there exists a positive constant $T_0$ depending only on $E_3(A_0,\phi_0)$ and the geometry of $M$ and $\Si$, such that
\begin{align}
\sup_{0\leq t\leq \min\{T,T_0\}}E_k(A,\phi)\leq C(T_0, E_{k}(A_0,\phi_0)),\label{uniform-es-p-YMHS1}
\end{align}
for all $k\geq 3$.

If $T\leq T_0$, estimate \eqref{uniform-es-p-YMHS1} provides uniform bounds for all higher-order energies up to time $T$. By Theorem \ref{Sob-embd} and the standard continuation criterion for the perturbed flow, these bounds allow $(A,\phi)$ to be extended smoothly beyond $T$, contradicting maximality. Hence $T>T_0$, and \eqref{uniform-es-p-YMHS1} yields precisely the desired inequality \eqref{uniform-es-p-YMHS}.
\end{proof}

\subsection{Proof of Theorem \ref{main-thm2}}\label{s-local-existence}
First we state and prove the local existence of smooth YMHS flow for smooth initial values.

\begin{thm}\label{main-thm1}
Let $\Si$ be a closed Riemann surface, $M$ be a compact K\"ahler manifold. Given smooth initial data $(A_0,\phi_0)\in \mathscr{A}\times \mathscr{S}$, there exists $T_0>0$ and a smooth solution $(A, \phi)$ to the YMHS flow \eqref{IP-YMHSF} on $[0,T_0]$, such that
\begin{align}
 \sup_{0\leq t\leq T_0}E_k(A,\phi)\leq C(T_0,E_k(A_0,\phi_0)), \label{es-E-k1}   
\end{align}
for all $k\ge 3$. Here $T_0$ depends only on $E_3(A_0,\phi_0)$ and the geometry of $M$ and $\Si$.
\end{thm}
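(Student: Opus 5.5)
We obtain the solution as a limit of perturbed YMHS flows as $\ep\to0$. For each $\ep\in(0,1]$, Theorem \ref{loc-ex-perburbed-YMHSF} gives a smooth solution $(A_\ep,\phi_\ep)$ of \eqref{eq-p-YMHS-0} with data $(A_0,\phi_0)$. Theorem \ref{uniform-es} shows that it exists on $[0,T_0]$, with $T_0$ depending only on $E_3(A_0,\phi_0)$, and that $\sup_{[0,T_0]}E_k(A_\ep,\phi_\ep)\le C(T_0,E_k(A_0,\phi_0))$ uniformly in $\ep$ for every $k\ge3$. The remaining work is to convert these gauge-covariant bounds into bounds for fixed-background norms, and then to extract a convergent subsequence.

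\textbf{Step 1: bounding the curvature.} Since $\Si$ is two-dimensional, $*F_{A_\ep}=\Psi_\ep-\mu(\phi_\ep)+c$. The moment map term $\mu(\phi_\ep)$ is bounded because $M$ is compact. Its covariant derivatives are polynomial in $\n^j\phi_\ep$, and these are controlled by $E_k$. Hence $\norm{F_{A_\ep}}_{H^{k,2}_{A_\ep}}\le C_k$.

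\textbf{Step 2: bounding the connection in a fixed background.} Write $a_\ep=A_\ep-A_0$. From \eqref{eq-p-YMHS-0} we have $\p_t a_\ep=(j-\ep I)(D^*F+\phi^*D\phi)$, which is bounded in $H^{k-1,2}_{A_\ep}$ by Step 1 and the bound on $\n\phi_\ep$. We bootstrap $\norm{a_\ep}_{H^{k-1,2}_{A_\ep}}$ as follows. Differentiate $\norm{\n_{A_\ep}^i a_\ep}^2_{L^2}$ in time. The commutator $[\p_t,\n_{A_\ep}]$ equals $\p_tA_\ep\#\,\cdot$, which yields a Gronwall inequality on the fixed interval $[0,T_0]$. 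Lemma \ref{equiv-es-a} and Lemma \ref{equiv-es-f} then give $a_\ep\in W^{1,\infty}([0,T_0],H^{k-1,2}_{A_0})$ uniformly in $\ep$. Lemma \ref{equiv-norm} converts the intrinsic bounds on $\phi_\ep$ into $\phi_\ep\in L^\infty([0,T_0],W^{k,2}_{A_0})$, with $\p_t\phi_\ep$ bounded in $W^{k-2,2}_{A_0}$.

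\textbf{Step 3: passing to the limit.} Apply Theorem \ref{Sob-embd} and Lemma \ref{C0-em}, or Aubin--Lions, for every $k$. A diagonal subsequence then converges in $C^0([0,T_0],C^r)$ for all $r$ to a limit $(A,\phi)$. The $\ep$-terms in \eqref{eq-p-YMHS-0} vanish in the limit, so $(A,\phi)$ solves \eqref{IP-YMHSF}. Time derivatives are recovered from the equation, so the solution is smooth. The bound \eqref{es-E-k1} follows from lower semicontinuity, because the norms $E_k$ are continuous under $C^r$ convergence of $(A,\phi)$.

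\textbf{Main obstacle.} I expect the hard part to be Step 2. The energies $E_k$ are gauge invariant and do not control $a_\ep$ directly. The quantity actually controlled is $\p_ta_\ep$, and this control is only as good as $H^{k-1,2}$, which causes the loss of one derivative. To close the bootstrap I would control $\norm{a_\ep}_{L^\infty H^{k-1,2}_{A_0}}$ by integrating $\p_t a_\ep$ in time directly in the fixed $A_0$-norms. The $A_\ep$-norm of $\p_ta_\ep$ would be converted to the $A_0$-norm via Lemma \ref{equiv-es-f}. Since $T_0$ is fixed and the polynomial growth is harmless over a finite interval, this should close, possibly after shrinking $T_0$ by an amount depending only on $E_3$.
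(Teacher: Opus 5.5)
Your proposal is correct and follows the paper's route. It uses parabolic regularization by the perturbed flow, the $\ep$-uniform bounds of Theorem~\ref{uniform-es} on a common interval $[0,T_0]$, and conversion to fixed-background norms via Lemmas~\ref{equiv-es-a}, \ref{equiv-es-f} and \ref{equiv-norm}, followed by a compactness and diagonal argument as $\ep\to0$. Your Step 2 also makes explicit a point the paper only asserts, namely that $a_\ep$ itself and not just $\p_t a_\ep$ is controlled; since $\p_t\n^i_{A_\ep}a_\ep$ is linear in $a_\ep$ with coefficients bounded by $\norm{\p_t a_\ep}_{H^{k-1,2}_{A_\ep}}$, the Gronwall inequality is linear, closes on $[0,T_0]$, and needs no shrinking of $T_0$.
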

\begin{proof}
For each $\ep \in (0,1]$, Theorem~\ref{loc-ex-perburbed-YMHSF} gives a smooth solution $(A_\ep,\phi_\ep)$ of the perturbed YMHS flow \eqref{eq-p-YMHS-0} on $\Si\times [0,T_\ep)$. By Theorem \ref{uniform-es}, this solution satisfies the uniform energy bounds \eqref{uniform-es-p-YMHS} on a uniform time interval $[0,T_0]$.

Fix a smooth reference connection $A_{ref}$, and write
\[A_\ep=A_{ref}+a_\ep.\]
In view of Lemma \ref{l:3-1}, $(a_\ep,\phi_\ep)$ satisfies the following form of the perturbed YMHS flow:
\begin{equation}\label{eq}
	\begin{cases}
		\p_t\phi_\ep =-(J+\ep I)(2\bar{\p}^*_{A_\ep}\dbar_{A_\ep}\phi_\ep + JF_{A_\ep,\phi_\ep}\phi_\ep),\\
		\p_t a_\ep=-(\ep j + I)(D_{A_\ep} F_{A_\ep,\phi_\ep} - 2d\mu(\phi_\ep)\dbar_{A_\ep}\phi_\ep).
	\end{cases}
\end{equation}
Differentiating the system \eqref{eq} in time and using \eqref{uniform-es-p-YMHS}, we deduce that
\[\sup_{0\leq t\leq T_0}\norm{\p_t^ja_\ep}_{H^{k,2}_{A_\ep}}\leq C_{k,j},\]
for all $j,k\in \mathbb{N}$. For $k\geq 2$ and all $j\in \mathbb{N}$. Now Lemma \ref{equiv-es-a} allows us to replace the norm w.r.t. $A_\ep$ with the one w.r.t. $A_{ref}$, i.e.
\begin{align}
\sup_{0\leq t\leq T_0}\norm{\p^j_ta_\ep}_{H^{k,2}_{A_{ref}}}\leq C_{k,j}.\label{es-1}
\end{align}
On the other hand, using \eqref{uniform-es-p-YMHS} and the equation for $\phi_\ep$, we have
\begin{align*}
\sup_{0\leq t\leq T_0}\norm{\p^j_t\phi_\ep}_{H^{k,2}_{A_{\ep}}}\leq C_{k,j}.
\end{align*}
By Lemma \ref{equiv-norm}, we may switch to the extrinsic norm
\[\sup_{0\leq t\leq T_0}\norm{\p^j_t\phi_\ep}_{W^{k,2}_{A_{\ep}}}\leq C_{k,j}.\]
Then using Lemma \ref{equiv-es-f} together with estimate \eqref{es-1}, we obtain the bound w.r.t. the reference connection 
\begin{align}
	\sup_{0\leq t\leq T_0}\norm{\p^j_t\phi_\ep}_{W^{k,2}_{A_{ref}}}\leq C_{k,j}.\label{es-2}
\end{align}

Thus, using Sobolev embedding theorem (Theorem~\ref{Sob-embd}), we conclude from the uniform estimates \eqref{es-1} and \eqref{es-2} that $(a_\ep,\phi_\ep)$ converges as $\ep \to 0$ to a pair $(a,\phi)$ in $C^r(\Si\times [0,T_0])$ for each integer $r\ge 0$. Therefore, the limit $(A=A_{ref}+a, \phi)$ is a smooth solution to the YMHS flow \eqref{IP-YMHSF} with the initial data $(A_0,\phi_0)$. The uniform estimates \eqref{uniform-es-p-YMHS} pass to the limit and yield \eqref{es-E-k1}.
\end{proof}

Now we give the proof of Theorem \ref{main-thm2}. 

\begin{proof}[\bf{Proof of Theorem \ref{main-thm2}}]
The existence of smooth solutions to the YMHS flow for smooth initial values is already proved in Theorem~\ref{main-thm1}. For non-smooth initial data $(A_0,\phi_0)$, we prove the local existence in Sobolev spaces by approximating the data with smooth ones.

Fix $k\geq 3$, assume that $a_0\in H^{k+1,2}_{A_{ref}}$ and $\phi_0\in W^{k,2}_{A_{ref}}$. By Lemmas \ref{equiv-es-a}, \ref{equiv-es-f} and \ref{equiv-norm}, the initial data satisfies
\[E_k(A_0,\phi_0)=\norm{\n_{A_0}\phi_0}^2_{H^{k-1,2}_{A_0}}+\norm{F_{A_0,\phi_0}}^2_{H^{k,2}_{A_0}}<\infty.\]

We may construct a sequence of smooth data $(a_0^n,\phi_0^n)\in \Om^1(ad\P)\times \Ga(\F)$ (see for example \cite{DW01}) such that
\[\norm{\phi^n_0-\phi_0}_{W^{k,2}_{A_{ref}}}+\norm{a^n_0-a_0}_{H^{k+1,2}_{A_{ref}}}\to 0.\]
In particular, this implies that
\begin{align}
\norm{\n_{A^n_0}\phi^n_0}_{H^{k-1,2}_{A^n_0}}&\to \norm{\n_{A_0}\phi_0}_{H^{k-1,2}_{A_0}}, \label{e:initial-bound-1}\\ \norm{F_{A^n_0,\phi^n_0}}_{H^{k,2}_{A^n_0}}&\to \norm{F_{A_0,\phi_0}}_{H^{k,2}_{A_0}},\label{e:initial-bound-2}
\end{align}
as $n\to \infty$.

By Theorem \ref{main-thm1}, for each $n$ there exists a solution $(A^n,\phi^n)$ of the YMHS flow on $\Si\times[0,T_n]$ with initial data $(A_{ref}+a^n_0, \phi_0^n)$. Since the initial energies are uniformly bounded by \eqref{e:initial-bound-1} and \eqref{e:initial-bound-2}, the existence times $T_n$ have a common positive lower bound $T_0$ for all sufficiently large $n$. Therefore, \eqref{es-E-k1} implies that these solutions satisfy uniform energy bounds
\begin{align}
	\sup_{0\leq t\leq T_0}(\norm{\n_{A^n}\phi^n}^2_{H^{k-1,2}_{A^n}}+\norm{F_{A^n_0,\phi^n_0}}^2_{H^{k,2}_{A^n}})\leq C(T_0, E_{k}(A_0,\phi_0)),\label{uniform-es-YMHS-n}
\end{align}
where we set $A^n=A_{ref}+a^n$.

On the other hand, from the evolution equation
\[\p_t a^n=-\n_{A^n} F_{A^n,\phi^n} - 2d\mu(\phi^n)\dbar_{A^n}\phi^n,\]
we deduce, using estimate \eqref{uniform-es-YMHS-n} together with Lemma \ref{equiv-es-a}, that
\[\sup_{0\leq t\leq T_0}(\norm{\p_t a^n}_{H^{k-1,2}_{A_{ref}}}+\norm{a^n}_{H^{k-1,2}_{A_{ref}}})\leq C_k.\]
Combining this with Lemmas \ref{equiv-norm} and \ref{equiv-es-f}, we further obtain
\[\sup_{0\leq t\leq T_0}(\norm{\p_t\phi^n}_{W^{k-2,2}_{A_{ref}}}+\norm{\phi^n}_{W^{k,2}_{A_{ref}}})\leq C_k.\]

Therefore, there exists a limiting field $(a,\phi) \in W^{1,\infty}([0,T_0], H^{k-1,2}_{A_{ref}})\times L^\infty([0,T_0], W^{k,2}_{A_{ref}})$ such that, up to a subsequence,
\begin{align*}
a^n& \to a \quad \text{weakly* in}\quad W^{1,\infty}([0,T_0], H^{k-1,2}_{A_{ref}}),\\
\phi^n&\to \phi \quad \text{weakly* in}\quad L^\infty([0,T_0], W^{k,2}_{A_{ref}}),\\
\p_t\phi^n&\to \p_t\phi \quad \text{weakly* in}\quad L^\infty([0,T_0], W^{k-2,2}_{A_{ref}}).
\end{align*} 

It is then standard to verify that the limit $(A=A_{ref}+a, \phi)$ is a solution to the YMHS flow \eqref{IP-YMHSF} with initial data $(A_0,\phi_0)$. Finally, weak-star lower semicontinuity of the norms shows that $(A,\phi)$ satisfies the desired energy bound \eqref{es-E-k}.
\end{proof}

\section{Uniqueness of YMHS flow}\label{s-uiqueness}
In this section, we prove uniqueness of the YMHS flow \eqref{IP-YMHSF} by putting two solutions in De Turck gauge and comparing the gauge-fixed pairs by an intrinsic energy argument.

\subsection{The De Turck YMHS flow} 
Let $(\tilde{A},\tilde{\phi})$ be a solution to the YMHS flow \eqref{IP-YMHSF}. We introduce a time-dependent gauge transformation $s$ satisfying
\begin{equation}\label{eq-0}
\begin{cases}
s^{-1}\p_t s=-D^*_{A}(A-A_0),\\
s(0)=id,
\end{cases}
\end{equation}
where $A=s^*\tilde{A}$. After left multiplication by $s$ and rewriting the covariant derivatives in the $\tilde{A}$-gauge, the right-hand side of \eqref{eq-0} expands as
\begin{align*}
-D^*_{\tilde{A}}(ds+\tilde{A}s-sA_0)
={}&-D^*_{\tilde{A}}\big(D_{\tilde{A}}s+s(\tilde{A}-A_0)\big)\\
={}&-D^*_{\tilde{A}}D_{\tilde{A}}s
+D_{\tilde{A}}s\#(\tilde{A}-A_0)
+sD^*_{\tilde{A}}(\tilde{A}-A_0).
\end{align*}
Thus, \eqref{eq-0} is equivalent to the strictly parabolic system
\begin{equation}\label{eq-s-1}
\begin{cases}
\p_t s=-D^*_{\tilde{A}}D_{\tilde{A}}s
+D_{\tilde{A}}s\#(\tilde{A}-A_0)
+sD^*_{\tilde{A}}(\tilde{A}-A_0),\\
s(0)=id.
\end{cases}
\end{equation}

In this gauge, the transformed pair $(A, \phi)=(s^*\tilde{A},s^*\tilde{\phi})$ satisfies the following De Turck YMHS flow:
\begin{equation}\label{eq-2}
\begin{cases}
\p_t\phi+B\phi
=-J\big(D_A^*D_A\phi+J(\mu(\phi)-c)\phi\big),\\
\p_t A-\nabla_A B
=-\big(D_A\Psi-2d\mu(\phi)\dbar_A\phi\big),
\end{cases}
\end{equation}
where we denote
\[
\Psi:=F_{A,\phi},\quad B:=-D_A^*(A-A_0).
\]
Conversely, a solution of \eqref{eq-2}, together with the parabolic gauge so that
$s^{-1}\partial_t s=B$, recovers a solution of the original YMHS flow \eqref{IP-YMHSF}. A direct computation yields the evolution equations for $B$ and $\Psi$:

\begin{align}
\p_t B=&-\n_A^*\n_A B+\n^*_A\n_A \Psi-2\n^*_A(d\mu(\phi)\bar{\p}_A\phi)+\mathcal Q_B,\label{eq-B}\\
\p_t \Psi=&[\Psi, B],\label{eq-Psi}
\end{align}
where
\[\mathcal Q_B:=\n_A B\#(A-A_0)-\n_A\Psi\#(A-A_0)+2d\mu(\phi) \bar{\p}_A\phi\# (A-A_0).\]

Therefore, the De Turck YMHS flow \eqref{eq-2} is equivalent to the original YHMS flow \eqref{IP-YMHSF} throw a time-dependent gauge transformation. Moreover, the solution to the De Turck YMHS flow enjoys an improved regularity due to its parabolic structure, which verifies Remark~\ref{r:deturck}.

\begin{prop}\label{sharp-reg-ymhsf}
For $k\geq 3$, assume that the initial data satisfies \eqref{reg-initial-data}.  Then there exists a $T_0>0$ depending only on $E_3(A_0,\phi_0)$ and the geometry of $M$ and $\Si$, and a solution $(A=A_{ref}+a,\phi)$ to the De Turck YMHS flow \eqref{eq-2} on $[0,T_0]$, such that
\begin{align}
 a\in L^\infty([0,T_0], H^{k,2}_{A_{ref}})\cap L^2([0,T_0], H^{k+1,2}_{A_{ref}}),\,\, \phi\in L^\infty([0,T_0], W^{k, 2}_{A_{ref}}).\label{es-sharp}
\end{align}
\end{prop}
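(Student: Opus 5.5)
The plan is to obtain the De Turck solution by a gauge transformation of the solution from Theorem~\ref{main-thm2}, and to get the improved regularity of $a$ from the parabolic structure of the connection equation in \eqref{eq-2}. The gauge-invariant energies $E_k$ already control $\phi$, so only the connection needs new work. To make every computation legitimate, I first argue for smooth data: take the smooth approximating data $(A_{ref}+a_0^n,\phi_0^n)$ used in the proof of Theorem~\ref{main-thm2}, with the corresponding smooth YMHS solutions $(\tilde A^n,\tilde\phi^n)$ on the common interval $[0,T_0]$. By \eqref{es-E-k1}, these satisfy uniform bounds on $E_k$.

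\textbf{Step 1: the gauge.} For each $n$, solve \eqref{eq-s-1} with $\tilde A=\tilde A^n$ and $A_0=A_{ref}+a^n_0$. Given $\tilde A^n$, this equation is \emph{linear} and strictly parabolic in $s$ with smooth coefficients. Hence it has a smooth solution on all of $[0,T_0]$, so no shrinking of the time interval occurs. Setting $(A^n,\phi^n)=(s^*\tilde A^n,s^*\tilde\phi^n)$ gives a smooth solution of \eqref{eq-2}. Since $E_k$ is gauge invariant, we still have $\sup_t E_k(A^n,\phi^n)\le C(T_0,E_k(A_0,\phi_0))$. In particular $\|\nabla_{A^n}\phi^n\|_{H^{k-1,2}_{A^n}}$ and $\|\Psi^n\|_{H^{k,2}_{A^n}}$ are uniformly bounded, and hence so is $\|{*}F_{A^n}\|_{H^{k,2}_{A^n}}$, since $\mu$ is smooth and $M$ is compact.

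\textbf{Step 2: parabolic estimate for $b=A-A_0$.} Write the second equation of \eqref{eq-2} as
\[
\partial_t b=-(D_A^*D_A+D_AD_A^*)b+\mathcal N,
\]
using $D_A\Psi=D_A{*}F_A+D_A\mu(\phi)$ and $F_A=F_{A_0}+D_{A_0}b+\tfrac12[b,b]$. The remainder $\mathcal N$ collects $F_{A_0}$-terms, quadratic and cubic terms $b\#\nabla b$, $b\#b\#b$, and the terms $d\mu(\phi)\bar\partial_A\phi$, $D_A\mu(\phi)$. The latter are bounded in $H^{k,2}_A$ and $H^{k-1,2}_A$ respectively by Step~1. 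I will run the $L^2$ energy estimate for $\nabla^k_{A}b$, in the same manner as the estimates of Section~\ref{s-perturbed-YMHSF} but with a genuine $-\|\nabla^{k+1}b\|^2$ term:
\[
\partial_t\|b\|^2_{H^{k,2}}+\|b\|_{H^{k+1,2}}^2\le C(1+\|b\|^2_{H^{k,2}})^{N}.
\]
Here the nonlinear terms are handled with Corollary~\ref{Sob1}, absorbing top-order pieces into the dissipation, and Lemma~\ref{equiv-es-a} is used to pass between $A$- and $A_{ref}$-norms. Since $b(0)=0$, an ODE comparison gives uniform bounds for $b^n$ in $L^\infty H^{k,2}\cap L^2H^{k+1,2}$ on a time interval depending only on $E_3$ and $\|a_0\|_{H^{k+1,2}}$. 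After possibly shrinking $T_0$, this holds on the whole interval; together with $a_0\in H^{k+1,2}_{A_{ref}}$ it gives the bound for $a=a_0+b$. The bounds for $\phi^n$ in $W^{k,2}_{A_{ref}}$ then follow from Lemmas~\ref{equiv-norm} and \ref{equiv-es-f}.

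\textbf{Step 3: passage to the limit.} Extract weak-$*$ limits as in the proof of Theorem~\ref{main-thm2}, using Lemma~\ref{C0-em} for strong convergence in lower norms, so that the nonlinear terms converge. Lower semicontinuity of the norms then yields \eqref{es-sharp}.

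The main obstacle is Step~2. One must ensure that the polynomial nonlinearity does not force the time of existence to depend on more than $E_3$ and the data. A cleaner alternative I would try first is to avoid a closed Gronwall inequality altogether. The idea is elliptic control: $\|b\|_{H^{k+1,2}}$ is controlled by $\|D_Ab\|_{H^{k,2}}+\|D_A^*b\|_{H^{k,2}}$, and $D_Ab$ is essentially $F_A-F_{A_0}$ up to quadratic terms, which is already bounded through $E_k$. This would leave only $B=-D_A^*b$ to estimate, via the linear parabolic equation \eqref{eq-B}, whose source terms are bounded by $E_k$.
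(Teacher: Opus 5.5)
Step~2 fails as written, because the connection equation of \eqref{eq-2} is not a heat equation for the Hodge Laplacian. Since $B=-D_A^\ast b$, the equation reads $\partial_t b=D_A(B-\Psi)+2\,d\mu(\phi)\bar\partial_A\phi$, so every second-order term lies in the image of $D_A$, i.e.\ along the gauge orbit. Write $F_A=F_{A_0}+D_Ab+b\# b$ and use $D_A^\ast=-\ast D_A\ast$ in dimension two. Then the curvature part of $-D_A\Psi$ is $-D_A(\ast D_Ab)=-\ast D_A^\ast D_Ab$, modulo the terms you list. This is the Hodge-star rotation of $D_A^\ast D_Ab$ (the $j$ in $\partial_tA=j(D_A^\ast F_A+\cdots)$), not $-D_A^\ast D_Ab$. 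So the principal part is $-D_AD_A^\ast b-\ast D_A^\ast D_Ab$, the mixed type mentioned in the introduction. In the linearized abelian model on a flat torus (ignoring $\phi$), write $b=d\alpha+\ast d\beta$: the coexact part $\beta$ does not evolve at all, and only $\alpha$ is smoothed. Correspondingly, $\frac12\partial_t\|\nabla_A^kb\|_{L^2}^2$ contains only the dissipation $-\|\nabla_A^kD_A^\ast b\|_{L^2}^2$, plus a top-order cross term pairing $\nabla_A^k\Psi\approx\nabla_A^k(\ast D_Ab)$ with $\nabla_A^kD_A^\ast b$. That cross term is not of the form $b\#\nabla b$, and no $\|b\|^2_{H^{k+1,2}}$ appears on the left. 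The equation never smooths the curl of $b$; it can only be controlled through the curvature, i.e.\ through $E_k$. You can repair Step~2 by bounding the cross term with $\|\nabla_A^k\Psi\|_{L^2}^2\le E_k$ and then applying an elliptic estimate for $D_A+D_A^\ast$, but that is precisely your alternative. Note also that the ODE comparison in Step~2 would make $T_0$ depend on $\|a_0\|_{H^{k+1,2}}$ and $E_k$, not only on $E_3$.

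Your ``cleaner alternative'' is the correct argument, and it is exactly the paper's route. The paper gauges the solution of Theorem~\ref{main-thm2} and reads off $a\in L^\infty H^{k-1,2}_{A_{ref}}$ from $a=s^{-1}\nabla_{A_{ref}}s+s^{-1}\tilde a s$ and the regularity of $s$. It then applies parabolic regularity to \eqref{eq-B} to get $B\in L^\infty H^{k-1,2}_{A_{ref}}\cap L^2H^{k,2}_{A_{ref}}$. Finally it recovers $a$ from the elliptic system for $D_{A_{ref}}+D^\ast_{A_{ref}}$, with $D_{A_{ref}}a$ given by the curvature (bounded via $E_k$) and $D^\ast_{A_{ref}}a$ by $B$. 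Once $s$ is found, only linear problems remain, so $T_0$ does not shrink further. Two points need filling in if you take this route. First, the coefficients of \eqref{eq-B} and the term $\mathcal Q_B$ contain $A-A_0$, so its sources are ``bounded by $E_k$'' only after you have a lower-order bound on $b$. The paper takes this from the gauge transformation; in your approximation scheme it must be uniform in $n$. Second, despite the linear-looking display \eqref{eq-s-1}, the gauge equation $s^{-1}\partial_ts=-D^\ast_{s^\ast\tilde A}(s^\ast\tilde A-A_0)$ contains a term quadratic in $\nabla s$. For $G=U(1)$ and $s=e^{i\theta}$ it reads $\partial_ts=-d^\ast ds+s|ds|^2-s\,d^\ast(\tilde A-A_0)$, so it is of harmonic-map-heat-flow type. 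Your Step~1 claim that no shrinking occurs is therefore unjustified. The paper explicitly allows $T_0$ to decrease at that step, and you need an existence time uniform in $n$, which should follow from the uniform bounds on $\tilde a^n$. Apart from this, working with smooth approximations and passing to the limit is a reasonable way to make the computations legitimate, whereas the paper works directly with the rough solution.
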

\begin{proof}
Fix a smooth reference connection $A_{ref}$ and write $A_0=A_{ref}+a_0$. Let $(\tilde{A}=A_{ref}+\tilde{a},\tilde{\phi})$ be a solution to the original YMHS flow given by Theorem \ref{main-thm2}. 

The parabolic gauge equation \eqref{eq-s-1} associated with $A$ can be written schematically as
\begin{equation}\label{eq-s}
\begin{cases}
\p_t s=-\n^*_{A_{ref}}\n_{A_{ref}} s+b\#\n_{A_{ref}}s+c\#s\\
s(0)=id,
\end{cases}
\end{equation}
where the coefficients $b$ and $c$ are given by
\begin{align*}
b=\tilde{a}+a_0,\quad c=\n_{A_{ref}}\tilde{a}+\tilde{a}\#\tilde{a}+\tilde{a}\#a_0.
\end{align*}
Since $\tilde{a}\in L^\infty([0,T_0], H^{k-1,2}_{A_{ref}})$ and $\p_t \tilde{a}\in L^\infty([0,T_0], H^{k-1,2}_{A_{ref}})$, we obtain
\begin{align*}
b\in& L^\infty([0,T_0], H^{k-1,2}_{A_{ref}}),\quad \p_tb\in L^\infty([0,T_0], H^{k-1,2}_{A_{ref}}),\\
c\in& L^\infty([0,T_0], H^{k-2,2}_{A_{ref}}),\quad \p_tc\in L^\infty([0,T_0], H^{k-2,2}_{A_{ref}}).
\end{align*}
Standard theory for strictly parabolic systems gives a unique solution $s$ of \eqref{eq-s} on $\Si\times[0,T_0]$ satisfying
\begin{align*}
\p_t^is\in L^\infty([0,T_0], W^{k-2i,2}_{A_{ref}})\,\, \text{for}\,\, i=0,1,
\end{align*}
where we decrease $T_0$ if necessary.

Under this gauge transformation, the pair $(A=s^*\tilde{A}, \phi=s^*\tilde{\phi})$ is a solution to the De Turck YMHS flow \eqref{eq-2} with the same initial data $(A_0,\phi_0)$. 

It remains to prove the regularity \eqref{es-sharp}.  From the identities
\begin{align*}
a=A-A_{ref}
=s^{-1}\n_{A_{ref}}s+s^{-1}\tilde{a}s,\qquad
\phi=s^{-1}\tilde{\phi},
\end{align*}
together with the estimates for $s$, we deduce that
\begin{align*}
a\in L^\infty([0,T_0],H^{k-1,2}_{A_{ref}}),\,\, \p_t a\in L^\infty([0,T_0],H^{k-3,2}_{A_{ref}}),\,\,
\phi\in L^\infty([0,T_0],W^{k,2}_{A_{ref}}).
\end{align*}
Lemma \ref{C0-em} further implies $a\in C^0([0,T_0],H^{k-2,2}_{A_{ref}})$. 

Now set $B=D^*_{A_{ref}}(a-a_0)+a\#(a-a_0)$, Corollary \ref{Sob1} and the estimates of $a$ imply
\[B\in L^\infty([0,T_0],H^{k-2,2}_{A_{ref}})\,\, B\in C^0([0,T_0],H^{k-3,2}_{A_{ref}}),\]
where $k\geq 3$. Using \eqref{eq-B}, we see that $B$ satisfies a the strictly parabolic system w.r.t. $A_{ref}$ of the form
\[\p_t B=-\n^*_{A_{ref}}\n^*_{A_{ref}}B+\tilde{b}\#\n_{A_{ref}}B+\tilde{c}\# B+\mathscr{H}.\]
By the estimate of $a$ and \eqref{es-E-k}, we have
\[\tilde{b}\in L^\infty([0,T_0], H^{k-1}_{A_{ref}}),\,\,\tilde{c}\in L^\infty([0,T_0], H^{k-2}_{A_{ref}}),\,\, \mathscr{H}\in L^\infty([0,T_0], H^{k-2}_{A_{ref}}).\]
 Standard parabolic regularity theory consequently yields
\[B\in L^\infty([0,T_0], H^{k-1,2}_{A_{ref}})\cap L^2([0,T_0],H^{k,2}_{A_{ref}}).\]

We next recover one additional spatial derivative of $a$. The identity $D_{A_{ref}}a=(F_{A,\phi}-\mu(\phi)+c)\om_{\Si}-F_{A_{ref}}+a\#a$, together with \eqref{es-E-k} and the estimates of $a$ and $B$ implies
\[(D_{A_{ref}}+D^*_{A_{ref}})a+a\#a+a\#a_0=\mathscr{R},\]
where $\mathscr{R}\in L^\infty([0,T_0], H^{k,2}_{A_{ref}})$. Then, an elliptic bootstrap argument yields
\[a\in L^\infty([0,T_0], H^{k,2}_{A_{ref}})\cap L^2([0,T_0],H^{k+1,2}_{A_{ref}}).\]
This proves \eqref{es-sharp} and completes the proof.
\end{proof}

\subsection{Uniqueness of the De Turck YMHS flow}\
Fix a smooth reference connection $A_{ref}$ and define
\begin{align*}
\mathscr{W}_1:=&\{(A,\phi)|\quad A-A_{ref}\in H^{2,2}_{A_{ref}},\quad \phi\in W^{3,2}_{A_{ref}}\},\\
\mathscr{W}_2:=&\{(A,\phi)|\quad \Psi\in W^{1,3}_{A_{ref}}\}.
\end{align*}
Now, let $(A_i,\phi_i)\in L^\infty([0,T], \mathscr{W}_1\cap \mathscr{W}_2)$, $i=1,2$, solve the De Turck YMHS flow \eqref{eq-2} with the same initial data $(A_0,\phi_0)$. Our goal is to show that $(A_1,\phi_1)=(A_2,\phi_2)$. 

For simplicity, we denote
\[B_i=-D^*_{A_i}(A_i-A_0),\quad \Psi_i=F_{A_i,\phi_i}\]
for $i=1,2$. By the second equation in \eqref{eq-2}, together with the definitions of $\mathscr{W}_1$ and $\mathscr{W}_2$, Lemma \ref{equiv-es-f} implies that
\begin{align}
 \p_t A_i\in L^\infty([0,T], L^2) ,\quad  B_i\in L^\infty([0,T], H^{1,2}_{A_{ref}}).\label{es-p-t-a}
\end{align} 
Moreover, it follows from \eqref{eq-Psi}, together with the above regularity, that 
\begin{align}
  \p_t\Psi_i\in L^\infty([0,T], H^{1,2}_{A_{ref}}).\label{es-p-t-Psi}  
\end{align}
Consequently, the embedding theorem, i.e. Lemma \ref{C0-em}, yields
\begin{align}
  A_i\in C^0([0,T],H^{1,2}_{A_{ref}}),\, B_i\in C^0([0,T], L^2),\quad \Psi_i\in C^0([0,T], H^{1,2}_{A_{ref}}).\label{es-c_0} 
\end{align}
Similarly, using the first equation in \eqref{eq-2} and the definitions of $\mathscr{W}_1$, Lemmas \ref{equiv-es-f} and \ref{equiv-norm} imply
\begin{align}
   \p_t\phi_i \in L^\infty([0,T], W^{1,2}_{A_{ref}})\label{es-p-t-phi} 
\end{align}
Then Lemma \ref{C0-em} yields
\begin{align}
  \phi_{i}\in C^0([0,T],W^{2,2}_{A_{ref}}).\label{es-c_0-phi}  
\end{align}

These regularity properties of $(A_i,\phi_i)$, for $i=1,2$, ensure that all the quantities appearing in the estimates below are well defined and justify the subsequent uniqueness argument.

\subsubsection{Intrinsic energy functionals} We introduce intrinsic energy functionals measuring the difference between $(A_1,\phi_1)$ and $(A_2, \phi_2)$. Since the space of connections is affine, it is natural to measure the difference between two connections directly. Setting
\[\bar{A}:=A_1-A_2,\quad \bar{B}:=B_1-B_2,\quad \bar{\Psi}:=\Psi_1-\Psi_2,\quad \Theta:=\n_{A_1}\Psi_1-\n_{A_2}\Psi_2,\]
 we define 
\begin{align*}
H_1(t):=&\norm{\bar{A}}^2_{L^2},\quad H_2(t):=\norm{\bar{B}}^2_{L^2},\\
H_3(t):=&\norm{\bar{\Psi}}^2_{L^2}+\norm{\Theta}^2_{L^2}.
\end{align*}

We next construct intrinsic quantities to measure the difference between the sections $\phi_1$ and $\phi_2$. Fixing $(x,t)$,  the values $\phi_1(x,t)$ and $\phi_2(x,t)$ lies in the same fiber $\F_{x}\cong M$, which is equipped with Riemannian metric $h$. So, we define a distance function
\[d: \Ga(\F)\times \Ga(\F)\to \mathbb{R}, \quad d(\phi_1,\phi_2)(x,t)=d_{\F_{x}}(\phi_1(x,t),\phi_2(x,t)),\]
which is $G$-invariant. Namely, for any gauge transformation $g$, we have
\[d(g^*\phi_1, g^*\phi_2)=d(\phi_1, \phi_2).\]

On the other hand, by using the extrinsic norms, estimate \eqref{es-c_0-phi} yields 
\begin{equation}\label{es-d}
  \begin{aligned}
\sup_{x\in \Si}d(\phi_1,\phi_2)(x,t)\leq  &C\norm{\phi_1(\cdot,t)-\phi_0}_{L^\infty}+C\norm{\phi_2(\cdot,t)-\phi_0}_{L^\infty}\\
\leq &C\norm{\phi_1(\cdot,t)-\phi_0}_{W^{2,2}_{A_{ref}}}+C\norm{\phi_2(\cdot,t)-\phi_0}_{W^{2,2}_{A_{ref}}}\to 0
\end{aligned}  
\end{equation}
as $t\to 0$. Since $\phi_1(0)=\phi_2(0)$, estimate \eqref{es-d} implies that there exists a $t_0>0$ such that for all $(x,t)\in \Si\times [0,t_0]$, there is a unique geodesic $\ga_{x,t}(s)$ in the fiber $\F_{x}$ connecting $\phi_1(x,t)$ and $\phi_2(x,t)$.  We thus define a smooth map
\[U: \Si\times [0,t_0]\times [0,1]\to \F, \quad U(x,t,s):=\ga_{x,t}(s),\] 
with $U(x,t,0)=\phi_1(x,t)$ and $U(x,t,1)=\phi_2(x,t)$. Let $p: \Si\times [0,t_0]\times [0,1]\to \Si,\, (x,t,s)\to x$ be the canonical projection.  Consider the interpolating space-time connection
\[\tilde{A}(s)=(1-s)\tilde{A}_1+s\tilde{A}_2\]
on the pull-back bundle $p^*\F$ over $\Si\times [0,t_0]\times [0,1]$,  where we denote 
\[\tilde{A}_1:=B_1dt+A_1,\, \quad \tilde{A}_2:=B_2dt+A_2.\]
Let $\n_{\tilde{A}(s)}$ be the induced covariant derivative. Parallel transport in the $s$-direction along $U(x,t,\cdot)$ defines an isometric bundle morphism
\begin{equation}\label{parallel-trans}
\mathscr{P}: (\phi_1^*T\F^v,\n_{\tilde{A}_1})\to (\phi_2^*T\F^v, \n_{\tilde{A}_2}).
\end{equation}  
Here, $T\F^v$ denotes the vertical subbundle of $T\F$. The map $\mathscr{P}$ extends naturally to tensor-valued sections, in particular to a bundle morphism from $\phi_1^*T\F^v\otimes T^*\Si$ into $\phi_2^*T\F^v\otimes T^*\Si$. Thus the covariant derivatives can be compared intrinsically through
\[\Phi:=\mathscr{P}\n_{A_1}\phi_1-\n_{A_2}\phi_2.\]

Based on these constructions, we introduce the following intrinsic energy functionals to measure the difference between the sections:
\begin{align*}
H_4(t):=&\norm{d(\phi_1,\phi_2)}^2_{L^2}, \quad H_5(t):=\norm{\Phi}^2_{L^2}.
\end{align*}

We will derive a Gronwall inequality for the total energy
\begin{equation}\label{es:total}
    \mathscr{E}(t):=\sum_{i=1}^5H_i(t).
\end{equation}

For later use, we record basic properties of the parallel transport operator $\mathscr{P}$. Let $\{e_\al(s)\}$ be a parallel frame of $T\F^v$ along $\ga(s)=U(\cdot,s)$. Write
\[J(\ga(s))=J^\al_\beta(s)e_\al(s)\otimes e^*_\beta(s).\]
Since $\n_{A(s)}J(\ga(s))=0$, the coefficients $J^\al_\beta(s)$ are constant.  Hence, 
\[J(\ga(s))=(J_0)^\al_\beta e_\al(s)\otimes e^*_\beta(s),\]
which implies
\[\mathscr{P}J_1=J_2\mathscr{P}.\]
Here, for simplicity, we denote $J_i=J(\phi_i)$ for $i=1,2$.
\begin{lemma}\label{err-es}
Let $\eta_i\in \g$, $\phi_i\in \Ga(\F)$ and $\varphi_i\in \Ga(\phi^*_iT\F^v)$ for $i=1,2$. Then
\begin{align}
 |\mathscr{P}\eta_1\cdot \phi_1-\eta_2\cdot \phi_2|\leq& C|\eta_1-\eta_2|+|\eta_2|d(\phi_1,\phi_2),\label{err-es1}\\
 |\mathscr{P}\eta_1\cdot \varphi_1-\eta_2\cdot \varphi_2|\leq& C|\varphi_1||\eta_1-\eta_2|+C|\eta_2||\mathscr{P}\varphi_1-\varphi_2|+C|\eta_2||\varphi_2|d(\phi_1,\phi_2).\label{err-es2}
\end{align}
\end{lemma}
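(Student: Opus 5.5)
The plan is to split each difference by inserting an intermediate term: first change the Lie algebra element, then move the base point using the parallel transport $\mathscr{P}$ along the fiber geodesic $\gamma=U(x,t,\cdot)$. Throughout, $\eta_1,\eta_2$ are regarded as values in the same fiber of $ad\P$ at the point $(x,t)$, so $\eta_1-\eta_2$ makes sense. The fields $X_\eta$ are Killing with smooth dependence on $\eta$. Since $M$ is compact, $|X_\eta|\le C|\eta|$, $|\n X_\eta|\le C|\eta|$ and $|\n^2X_\eta|\le C|\eta|$ hold uniformly on $M$.

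For \eqref{err-es1}, write
\[\mathscr{P}\eta_1\cdot\phi_1-\eta_2\cdot\phi_2=\mathscr{P}\bigl((\eta_1-\eta_2)\cdot\phi_1\bigr)+\bigl(\mathscr{P}X_{\eta_2}(\phi_1)-X_{\eta_2}(\phi_2)\bigr).\]
Since $\mathscr{P}$ is an isometry, the first term is bounded by $|X_{\eta_1-\eta_2}(\phi_1)|\le C|\eta_1-\eta_2|$. For the second term, let $V(s)$ be the parallel field along $\gamma$ with $V(0)=X_{\eta_2}(\phi_1)$, and set $f(s)=V(s)-X_{\eta_2}(\gamma(s))$. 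Then $f(0)=0$, and since $V$ is parallel, $\n_sf=-\n_{\dot\gamma}X_{\eta_2}$. The fiberwise statement is pointwise in $x$, so the $s$-component of $\tilde A(s)$ vanishes and $\n_s$ is just the Levi-Civita derivative along the fiber geodesic. Hence $|f(1)|\le\int_0^1|\n X_{\eta_2}||\dot\gamma|\,ds\le C|\eta_2|\,d(\phi_1,\phi_2)$, because $|\dot\gamma|=d(\phi_1,\phi_2)$. This gives \eqref{err-es1}. The constant in front of $|\eta_2|d$ is some $C$ rather than $1$, which is consistent with the intended use.

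For \eqref{err-es2}, recall that $\eta\cdot\varphi=\n_\varphi X_\eta$. Decompose
\[\mathscr{P}(\n_{\varphi_1}X_{\eta_1})-\n_{\varphi_2}X_{\eta_2}=\mathscr{P}(\n_{\varphi_1}X_{\eta_1-\eta_2})+\bigl[\mathscr{P}(\n_{\varphi_1}X_{\eta_2})-\n_{\mathscr{P}\varphi_1}X_{\eta_2}\bigr]+\n_{\mathscr{P}\varphi_1-\varphi_2}X_{\eta_2}.\]
The first and third terms are bounded by $C|\varphi_1||\eta_1-\eta_2|$ and $C|\eta_2||\mathscr{P}\varphi_1-\varphi_2|$ respectively.

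The middle term is the main point. Let $W(s)$ be the parallel transport of $\varphi_1$ along $\gamma$, and consider $g(s)=\mathscr{P}_{0\to s}(\n_{\varphi_1}X_{\eta_2})-\n_{W(s)}X_{\eta_2}$. Then $g(0)=0$ and $\n_sg=-(\n^2X_{\eta_2})(\dot\gamma,W)$. Therefore $|g(1)|\le C|\eta_2||\varphi_1|d(\phi_1,\phi_2)$. Since $|\varphi_1|=|\mathscr{P}\varphi_1|\le|\varphi_2|+|\mathscr{P}\varphi_1-\varphi_2|$, and $d$ is bounded on compact $M$, we get $|\eta_2||\varphi_1|d\le|\eta_2||\varphi_2|d+C|\eta_2||\mathscr{P}\varphi_1-\varphi_2|$. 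This yields \eqref{err-es2}.

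The only delicate step is justifying the $s$-derivative computation along $\gamma$ in the bundle setting. This is handled in a local trivialization in which the interpolating connection has no $ds$ component, together with the uniform bounds on $X_\eta$ and its derivatives from compactness of $M$ and linearity in $\eta$.
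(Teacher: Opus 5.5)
Your proof is correct and follows essentially the same route as the paper. Both arguments split off the $\eta_1-\eta_2$ part using that $\mathscr{P}$ is an isometry, then bound the transport error by integrating $\n_s$ of the Killing field (and of its first derivative, for \eqref{err-es2}) along the fiber geodesic of speed $d(\phi_1,\phi_2)$; the only cosmetic differences are that the paper attaches this error directly to the frame coefficients of $\varphi_2$, whereas you attach it to $\varphi_1$ and convert via $|\varphi_1|\le|\varphi_2|+|\mathscr{P}\varphi_1-\varphi_2|$ and the boundedness of $d$, and your remark that the factor in front of $|\eta_2|d(\phi_1,\phi_2)$ should be a constant $C$ rather than $1$ is also right.
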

\begin{proof}
We first note that, since $M$ is compact, the infinitesimal vector fields generated by the $G$-action, together with their covariant derivatives of the orders appearing below, are uniformly bounded along $\gamma$.

Let $\{\sigma_i\}$ be an orthonormal basis of the Lie algebra $\g$. For $j=1,2$, write
\[\eta_j=\eta^i_j\sigma_i.\]
Then we have
\begin{align*}
 |\mathscr{P}\eta_1\cdot \phi_1-\eta_2\cdot \phi_2|\leq&|\eta^i_1-\eta^i_2||\mathscr{P}\sigma_i\cdot\phi_1|+|\eta^i_2||\mathscr{P}\sigma_i\cdot\phi_1-\sigma_i\cdot\phi_2|\\
 \leq& C|\eta_1-\eta_2|+|\eta^i_2||\int_0^1\mathscr{P}\n_{\p_s \ga} (\sigma_i\cdot \ga)ds|\\
 \leq &C|\eta_1-\eta_2|+|\eta_2|d(\phi_1,\phi_2).
\end{align*}
This proves \eqref{err-es1}.

Next, let $\{e_\al(s)\}$ be a parallel orthonormal frame of $T\F^v$ along $\ga(s)=U(\cdot,s)$, so that $\n_{\partial_s\ga}e_\al(s)=0$. Then we write
\[\varphi_j=\varphi^\al_je_\al(j-1)\]
for $j=1,2$. Using the uniform bounds noted above, we obtain
\begin{align*}
 |\mathscr{P}\eta_1\cdot \varphi_1-\eta_2\cdot \varphi_2|=&|\eta^i_1\varphi^\al_1\mathscr{P}(\sigma_i\cdot e_\al(0))-\eta^i_2\varphi^\al_2\sigma_i\cdot e_\al(1)|\\
 \leq &|\eta^i_1\varphi^\al_1-\eta^i_2\varphi^\al_2||\mathscr{P}(\sigma_i\cdot e_\al(0))|+|\eta^i_2\varphi^\al_2||\mathscr{P}(\sigma_i\cdot e_\al(0))-\sigma_i\cdot e_\al(1)|\\
 \leq &C|\varphi_1||\eta_1-\eta_2|+C|\eta_2||\mathscr{P}\varphi_1-\varphi_2|\\
 &+|\eta_2||\varphi_2|\int_0^1|\n^2(\sigma_i\cdot \ga)(e_\al(s),\p_s\ga)|ds\\
 \leq & C|\varphi_1||\eta_1-\eta_2|+C|\eta_2||\mathscr{P}\varphi_1-\varphi_2|+C|\eta_2||\varphi_2|d(\phi_1,\phi_2).
\end{align*}
This proves \eqref{err-es2}.
\end{proof}

In the estimates below, we denote that $f_q(t)$ is a polynomial in
\[\sum_{i=1}^2\norm{A_i-A_0}_{H^{2,2}_{A_{ref}}}+\norm{\n_{A_i}\phi_i}_{H^{2,2}_{A_i}}+\norm{\Psi_i}_{H^{1,3}_{A_{ref}}}.\]
No derivative of order higher than those displayed is hidden in $f_q$.

\subsubsection{Estimate of $H_1$}
Using the second equation in system \eqref{eq-2}, we deduce that 
\begin{align}
\p_t \bar{A}-[\bar{A},B_1]-\n_{A_2}\bar{B}=-\Theta+2(d\mu(\phi_1)\bar{\p}_{A_1}\phi_1-d\mu(\phi_2)\bar{\p}_{A_2}\phi_2).\label{eq-bar{A}}
\end{align}
We first analyze the third term on the right-hand side of \eqref{eq-bar{A}}. For any fixed $\eta\in \g$, we have
\begin{align*}
&\<d\mu(\phi_1),\eta\>\bar{\p}_{A_1}\phi_1-\<d\mu(\phi_2),\eta\>\bar{\p}_{A_2}\phi_2\\
=&\<\bar{\p}_{A_1}\phi_1, J\eta\cdot \phi_1\>-\<\bar{\p}_{A_2}\phi_2, J_0\eta\cdot \phi_2\>\\
=&\<\mathscr{P}\bar{\p}_{A_1}\phi_1, J_0\mathscr{P}(\eta\cdot \phi_1)\>-\<\bar{\p}_{A_2}\phi_2, J_0\eta\cdot \phi_2\>\\
=&\Phi\#\mathscr{P}(\eta\cdot \phi_1)+\bar{\p}_{A_2}\phi_2\# (\mathscr{P}\eta\cdot\phi_1-\eta\cdot\phi_2).
\end{align*}
It follows from estimate \eqref{err-es1} that
\begin{align}
	|d\mu(\phi_1)\bar{\p}_{A_1}\phi_1-d\mu(\phi_2)\bar{\p}_{A_2}\phi_2|\leq C|\Phi|+C|\bar{\p}_{A_2}\phi_2|d(\phi_1,\phi_2).\label{key1}
\end{align}

Therefore, testing equation \eqref{eq-bar{A}} with $\bar{A}$ and applying estimate \eqref{key1}, we obtain
\begin{equation}\label{es-H_1}
\begin{aligned}
\frac{1}{2}\p_tH_1(t)=&\int_{\Si}\<\n_{A_2}\bar{B}-\Theta+2(d\mu(\phi_1)\bar{\p}_{A_1}\phi_1-d\mu(\phi_2)\bar{\p}_{A_2}\phi_2),\bar{A}\>dx\\ \leq&\frac{1}{20}\int_{\Si}|\n_{A_2}\bar{B}|^2dx+Cf_1\mathscr{E}(t).
\end{aligned}
\end{equation}

\subsubsection{Estimate of $H_2$} To estimate the energy functional $H_2$, we require the following result.
\begin{lemma}
Let $(A_1,\phi_1)$ and $(A_2,\phi_2)$ be two solutions to \eqref{eq-2}. Then we have
\begin{equation}\label{es-n-bar{A}}
\begin{aligned}
\norm{\n_{A_1}\bar{A}}^2_{L^2}\leq& C(\norm{A_2-A_0}^4_{H^{1,2}_{A_{ref}}}+1)\sum_{i+1}^4H_i.
\end{aligned}
\end{equation}
\end{lemma}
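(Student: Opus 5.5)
We want to control the full first covariant derivative of $\bar A=A_1-A_2$ by the energies $H_1,\dots,H_4$. On a Riemann surface this is an elliptic (Hodge/Gaffney) estimate: for an $ad\P$-valued $1$-form $\bar A$,
\[
\norm{\n_{A_1}\bar A}^2_{L^2}\le C\bigl(\norm{D_{A_1}\bar A}^2_{L^2}+\norm{D^*_{A_1}\bar A}^2_{L^2}\bigr)+C\int_\Si\bigl(|F_{A_1}|+|\mathrm{Ric}|\bigr)|\bar A|^2dx.
\]
This follows from the Weitzenb\"ock formula $D_{A_1}D^*_{A_1}+D^*_{A_1}D_{A_1}=\n^*_{A_1}\n_{A_1}+\mathrm{Ric}+F_{A_1}\#$ after integrating by parts. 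The plan is therefore to express $D_{A_1}\bar A$ and $D^*_{A_1}\bar A$ through the quantities $\bar\Psi$, $d(\phi_1,\phi_2)$, $\bar B$ and $\bar A$, up to lower-order terms.

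\textbf{Curl part.} From $F_{A_1}=F_{A_2}+D_{A_2}\bar A+\frac12[\bar A,\bar A]$ we get $D_{A_1}\bar A=F_{A_1}-F_{A_2}+\tfrac12[\bar A,\bar A]$, using $D_{A_1}\bar A=D_{A_2}\bar A+[\bar A,\bar A]$. On a surface $*F_{A_i}=\Psi_i-\mu(\phi_i)+c$, so
\[
|F_{A_1}-F_{A_2}|\le|\bar\Psi|+|\mu(\phi_1)-\mu(\phi_2)|\le|\bar\Psi|+C\,d(\phi_1,\phi_2),
\]
since $\mu$ is Lipschitz on the compact fiber. Hence $\norm{D_{A_1}\bar A}^2_{L^2}\le C(H_3+H_4)+C\norm{\bar A}^4_{L^4}$.

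\textbf{Divergence part.} Write $B_i=-D^*_{A_i}(A_i-A_0)$. Then
\[
\bar B=-D^*_{A_1}\bar A-(D^*_{A_1}-D^*_{A_2})(A_2-A_0)=-D^*_{A_1}\bar A+\bar A\#(A_2-A_0),
\]
so $\norm{D^*_{A_1}\bar A}^2_{L^2}\le 2H_2+C\norm{\bar A\,|A_2-A_0|}^2_{L^2}$. By H\"older and the Sobolev inequality \eqref{ineq-1}, $\norm{\bar A\,|A_2-A_0|}_{L^2}\le\norm{\bar A}_{L^4}\norm{A_2-A_0}_{L^4}$. In turn $\norm{\bar A}^2_{L^4}\le C\norm{\bar A}_{L^2}\norm{\bar A}_{H^{1,2}_{A_1}}$, and $\norm{A_2-A_0}_{L^4}\le C\norm{A_2-A_0}_{H^{1,2}_{A_{ref}}}$. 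Young's inequality then absorbs a small multiple of $\norm{\n_{A_1}\bar A}^2_{L^2}$ into the left side, leaving $C\norm{A_2-A_0}^4_{H^{1,2}_{A_{ref}}}H_1$. This is where the factor $\norm{A_2-A_0}^4_{H^{1,2}}$ in \eqref{es-n-bar{A}} comes from.

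\textbf{Main obstacle.} The difficulty is the nonlinear and curvature terms: $\norm{\bar A}^4_{L^4}$, and $\int|F_{A_1}||\bar A|^2$ inside the Weitzenb\"ock formula. The plan is as follows.
\begin{itemize}
\item For the curvature term, use $|F_{A_1}|\le|\Psi_1|+C\in L^\infty$, which holds because $\Psi_1\in H^{1,3}\subset L^\infty$. This gives a bound by $C\,H_1$. Strictly the constant then depends on $\norm{\Psi_1}_{L^\infty}$; I would absorb this into $C$, as the paper's conventions allow.
\item For the quartic term, interpolate: $\norm{\bar A}^4_{L^4}\le C\norm{\bar A}^2_{L^2}\norm{\bar A}^2_{H^{1,2}}$. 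Since $\bar A$ is bounded in $L^2$ by the a priori regularity \eqref{es-c_0}, it is small for short time ($\bar A(0)=0$, $\bar A\in C^0H^{1,2}$). Alternatively, write it as $\norm{\bar A}_{L^2}\norm{\bar A}_{L^\infty}\cdot\norm{\bar A}_{L^2}\norm{\bar A}_{H^{1,2}}$ and use Young's inequality with the bounded $H^{2,2}$ norms.
\end{itemize}
In either case one absorbs $\tfrac12\norm{\n_{A_1}\bar A}^2$ after possibly shrinking $t_0$. Collecting the pieces gives \eqref{es-n-bar{A}}.
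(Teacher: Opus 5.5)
Your proposal is correct and follows essentially the same route as the paper. Both arguments combine the Bochner/Weitzenb\"ock identity with the identities $D_{A_1}\bar A=\bar\Psi\,\omega_\Sigma-(\mu(\phi_1)-\mu(\phi_2))\,\omega_\Sigma+\bar A\#\bar A$ and $D^*_{A_1}\bar A=-\bar B+\bar A\#(A_2-A_0)$, then use the $L^4$ interpolation and Young's inequality, which produces the factor $\norm{A_2-A_0}^4_{H^{1,2}_{A_{ref}}}$. Your separate treatment of the quartic term $\norm{\bar A}^4_{L^4}$ and of the $F_{A_1}\#\bar A$ term (via $\Psi_1\in H^{1,3}\subset L^\infty$) simply makes explicit the solution-dependent constants that the paper silently absorbs into $C$.
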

\begin{proof}
Let $\bar{A}=A_1-A_2$. The following Bochner formula:
\[\n^*_{A_1}\n_{A_1}\bar{A}=D^*_{A_1}D_{A_1}\bar{A}+D_{A_1}D^*_{A_1}\bar{A}+R^\Si\#\bar{A}+F_{A_1}\#\bar{A}\]
implies that
\begin{align}
\int_{\Si}|\n_{A_1}\bar{A}|^2dx\leq \int_{\Si}|D_{A_1}\bar{A}|^2dx+\int_{\Si}|D^*_{A_1}\bar{A}|^2dx+C\int_{\Si}|\bar{A}|^2dx.\label{es-n-bar{A}-1}
\end{align}

Note that
\begin{align*}
D_{A_1}\bar{A}=&F_{A_1}-F_{A_2}+\bar{A}\#\bar{A}
=\bar{\Psi}\om_{\Si}-(\mu(\phi_1)-\mu(\phi_2))\om_{\Si}+\bar{A}\#\bar{A},\\
D^*_{A_1}\bar{A}=&-\bar{B}+\bar{A}\#(A_2-A_0),
\end{align*}
we obtain
\begin{align*}
\int_{\Si}|D_{A_1}\bar{A}|^2dx+\int_{\Si}|D^*_{A_1}\bar{A}|^2dx
%\leq& C(\norm{d(\phi_1,\phi_2)}^2_{L^2}+\norm{\bar{\Psi}}^2_{L^2}++\norm{\bar{B}}^2_{L^2})+C(\norm{A_2-A_0}^2_{L^4}+1)\norm{\bar{A}}^2_{L^4}\\
\leq &C(\norm{A_2-A_0}^4_{H^{1,2}_{A_{ref}}}+1)\sum_{i+1}^4H_i+\frac{1}{2}\norm{\n_{A_1}\bar{A}}^2_{L^2},
\end{align*}
where we have applied the inequality \eqref{ineq-1} to obtain
\[\norm{\bar{A}}_{L^4}\leq C\norm{\bar{A}}^{1/2}_{H^{1,2}_{A_1}}\norm{\bar{A}}^{1/2}_{L^2}.\]

Therefore, substituting this estimate into \eqref{es-n-bar{A}-1} yields the desired inequality \eqref{es-n-bar{A}}.
\end{proof}

Now we employ the equation \eqref{eq-B} to obtain
\begin{equation}\label{eq-bar{B}}
\begin{aligned}
\frac{1}{2}\p_tH_2(t)
=&-\int_{\Si}\<\n_{A_1}^*\n_{A_1} B_1-\n_{A_2}^*\n_{A_2} B_2, \bar{B}\>dx+\int_{\Si}\<\n_{A_1}^*\n_{A_1}\Psi_1-\n_{A_2}^*\n_{A_2}\Psi_2, \bar{B}\>dx\\
&-2\int_{\Si}\<\n^*_{A_1}(d\mu\phi_1)(\bar{\p}_{A_1}\phi_1)-\n^*_{A_2}(d\mu(\phi_2)\bar{\p}_{A_2}\phi_2), \bar{B}\>dx\\
&+\int_{\Si}\<\mathcal{Q}_{B_1}-\mathcal{Q}_{B_2}, \bar{B}\>dx=\sum_1^4S_i,
\end{aligned}
\end{equation}
in weak sense.

Here, the six terms at right hand side of \eqref{eq-bar{B}} admit the following estimates. For $S_1$, we have
\begin{align*}
S_1=&-\int_{\Si}\<\n_{A_1} B_1, \n_{A_1}\bar{B}\>dx+\int_{\Si}\<\n_{A_2} B_2, \n_{A_2}\bar{B}\>dx\\
=&-\int_{\Si}\<\n_{A_1} B_1, \n_{A_1}\bar{B}\>dx+\int_{\Si}\<\n_{A_2} B_1, \n_{A_2}\bar{B}\>dx-\int_{\Si}|\n_{A_2}\bar{B}|^2dx\\
=&-\int_{\Si}\<[\bar{A},B_1], \n_{A_2}\bar{B}\>dx-\int_{\Si}\<\n_{A_1}B_1, [\bar{A}, \bar{B}]\>dx-\int_{\Si}|\n_{A_2}\bar{B}|^2dx\\
\leq &-\frac{3}{4}\int_{\Si}|\n_{A_2}\bar{B}|^2dx+C\norm{\n_{A_1}B_1}_{L^{2}}\norm{\bar{A}}_{H^{1,2}_{A_1}}\norm{\bar{B}}_{H^{1,2}_{A_2}}+C\norm{\bar{A}}_{H^{1,2}_{A_1}}\norm{B_1}_{H^{1,2}_{A_{ref}}}\norm{\n_{A_2}\bar{B}}_{L^2},\\
\leq &-\frac{1}{2}\int_{\Si}|\n_{A_2}\bar{B}|^2dx+Cf_2\mathscr{E}(t).
\end{align*}
Here we have applied the estimate \eqref{es-n-bar{A}}. For $S_2$, we have
\begin{align*}
S_2=&\int_{\Si}\<\n_{A_1}\Psi_1, \n_{A_1}\bar{B}\>dx-\int_{\Si}\<\n_{A_2}\Psi_2, \n_{A_2}\bar{B}\>dx\\
=&\int_{\Si}\<\n_{A_1}\Psi_1, [\bar{A},\bar{B}]\>dx+\int_{\Si}\<\Theta, \n_{A_2}\bar{B}\>dx\\
\leq& C\norm{\n_{A_1}\Psi_1}_{L^{2}}\norm{\bar{A}}_{H^{1,2}_{A_1}}\norm{\bar{B}}_{H^{1,2}_{A_2}}+C\norm{\n_{A_2}\bar{B}}_{L^2}\norm{\Theta}_{L^2}\\
\leq &\frac{1}{80} \norm{\n_{A_2}\bar{B}}^2_{L^2}+Cf_2\mathscr{E}(t).
\end{align*}
By apply estimate \eqref{key1}, we get the following bound for $S_3$: 
\begin{align*}
S_3%=&-2\int_{\Si}\<\n^*_{A_1}d\mu(\phi_1)(\bar{\p}_{A_1}\phi_1)-\n^*_{A_2}d\mu(\phi_2)(\bar{\p}_{A_2}\phi_2), \bar{B}\>dx\\
=&-2\int_{\Si}\<d\mu(\phi_1)(\bar{\p}_{A_1}\phi_1), \n_{A_1}\bar{B}\>dx+2\int_{\Si}\<d\mu(\phi_2)(\bar{\p}_{A_2}\phi_2), \n_{A_2}\bar{B}\>dx\\
=&-2\int_{\Si}\<d\mu(\phi_1)(\bar{\p}_{A_1}\phi_1)-d\mu(\phi_2)(\bar{\p}_{A_2}\phi_2), \n_{A_2}\bar{B}\>dx-2\int_{\Si}\<d\mu(\phi_1)(\bar{\p}_{A_1}\phi_1), [\bar{A},\bar{B}]\>dx\\
\leq &\frac{1}{80}\int_{\Si}|\n_{A_2}\bar{B}|^2dx+Cf_2\mathscr{E}(t).
\end{align*}
For $S_4$, using estimate \eqref{key1}, we get
\begin{align*}
|\mathcal{Q}_{B_1}-\mathcal{Q}_{B_2}|\leq& C(|\n_{A_1}B_1|+|\n_{A_1}\Psi_1|+|\n_{A_1}\phi_1|+|A_2-A_0||B_1|)|\bar{A}|\\
&+C|A_2-A_0|(|\n_{A_2}\bar{B}|+|\Theta|+|\Phi|+|\n_{A_1}\phi_1|d(\phi_1,\phi_2)).
\end{align*}
Thus, applying the sobolev inequality \eqref{G-N-ineq}, estimate \eqref{key1}, we get
\begin{align*}
|S_4|\leq &\int_{\Si}|\mathcal{Q}_{B_1}-\mathcal{Q}_{B_2}||\bar{B}|dx\\
\leq &\frac{1}{80}\norm{\n_{A_2}\bar{B}}^2_{L^2}+Cf_2\mathscr{E}(t).
\end{align*}

 Therefore, substituting the above estimates of $S_1$-$S_4$ into the formula \eqref{eq-bar{B}}, we obtain
 \begin{equation}\label{es-H_2}
 \begin{aligned}
 \p_tH_2(t)\leq& -\frac{1}{4}\int_{\Si}|\n_{A_2}\bar{B}|^2dx+Cf_2\mathscr{E}(t).
 \end{aligned}
 \end{equation}

\subsubsection{Estimate of $H_3$} Since $\p_t \Psi_i=[\Psi_i,B_i]$ for $i=1,2$, we have
\begin{align}
	\p_t \bar{\Psi}=[\Psi_1, \bar{B}]+[\bar{\Psi}, B_2].\label{eq1-H3}
\end{align}
Testing equation \eqref{eq1-H3} by $\bar{\Psi}$, we get
\begin{equation}\label{eq1'-H3}
\begin{aligned}
\frac{1}{2}\p_t \int_{\Si}|\bar{\Psi}|^2dx=&\int_{\Si}\<[\Psi_1, \bar{B}],\bar{\Psi}\>dx\leq \norm{\Psi_1}_{H^{1,2}_{A_{ref}}}\norm{\bar{B}}_{H^{1,2}_{A_2}}\norm{\bar{\Psi}}_{L^2}\\
\leq &\frac{1}{80}\norm{\n_{A_2}\bar{B}}^2_{L^2}+Cf_3(H_2(t)+H_3(t)).
\end{aligned}
\end{equation}

On the other hand, we have
\begin{align*}
\p_t \n_{A_i}\Psi_i=&[\p_tA_i, \Psi_i]+\n_{A_i}[\Psi_i, B_i]\\
=&[\n_{A_i}\Psi_i,B_i]-[\n_{A_i}\Psi,\Psi_i]+2[d\mu(\phi_i)\bar{\p}_{A_i}\phi_i, \Psi_i].
\end{align*}
Consequently, $\Theta$ satisfies
\begin{equation}\label{eq2-H3}
\begin{aligned}
	\p_t \Theta=&[\n_{A_1}\Psi_1,\bar{B}]+[\Theta,B_2]-[\n_{A_1}\Psi_1,\bar{\Psi}]-[\Theta,\Psi_2]\\
	&+2[d\mu(\phi_1)\bar{\p}_{A_1}\phi_1,\bar{\Psi}]+2[d\mu(\phi_1)\bar{\p}_{A_1}\phi_1-d\mu(\phi_2)\bar{\p}_{A_2}\phi_2,\Psi_2].
\end{aligned}
\end{equation}
Thus, we obtain
\begin{equation}\label{eq2'-H3}
	\begin{aligned}
\frac{1}{2}\p_t \int_{\Si}|\Theta|^2dx=&\int_{\Si}\<[\n_{A_1}\Psi_1,\bar{B}],\Theta\>dx+\int_{\Si}\<[\n_{A_1}\Psi_1,\bar{\Psi}],\Theta\>dx\\
&+\int_{\Si}\<2[d\mu(\phi_1)\bar{\p}_{A_1}\phi_1,\bar{\Psi}],\Theta\>dx\\
&+\int_{\Si}\<2[d\mu(\phi_1)\bar{\p}_{A_1}\phi_1-d\mu(\phi_2)\bar{\p}_{A_2}\phi_2,\Psi_2],\Theta\>dx\\
\leq &C\norm{\n_{A_1}\Psi_1}_{L^3}(\norm{\bar{B}}_{H^{1,2}_{A_2}}+\norm{\bar{\Psi}}_{H^{1,2}_{A_2}})\norm{\Theta}_{L^2}+C\norm{\n_{A_1}\phi_1}_{L^\infty}\norm{\bar{\Psi}}_{L^2}\norm{\Theta}_{L^2}\\
&+C(\norm{\n_{A_1}\phi_1}_{L^\infty}+1)(\norm{d(\phi_1,\phi_2)}_{L^2}+\norm{\Phi}_{L^2})\norm{\Psi_2}_{L^\infty}\norm{\Theta}_{L^2}\\
\leq &\frac{1}{80}\norm{\n_{A_2}\bar{B}}^2_{L^2}+Cf_3\mathscr{E}(t),
	\end{aligned}
\end{equation}
Here we have used the following formula
\[\n_{A_2}\bar{\Psi}=\Theta-[\bar{A},\Psi_1]\]
to bound $\norm{\bar{\Psi}}_{H^{1,2}_{A_2}}$.

Therefore, we combine estimates \eqref{eq1'-H3} and \eqref{eq2-H3} to conclude that
\begin{equation}\label{es-H_3}
\p_t H_3(t)\leq \frac{1}{20}\norm{\n_{A_2}\bar{B}}^2_{L^2}+Cf_3\mathscr{E}(t).
\end{equation}

\subsubsection{Estimate of $H_4$} For notational convenience, let 
\[\n_{t,B_i}=\p_t +B_i\quad \text{for}\quad i=1,2,\] 
and denote by $\ga(s)=U(\cdot,s)$ the geodesic connecting $\phi_1$ and $\phi_2$. 

The first-variation formula for the squared distance yields
\begin{align*}
\frac{1}{2}\p_t \int_{\Si}d^2(\phi_1,\phi_2)dx=&\int_{\Si}\<\n d^2(\cdot, \cdot), (\n_{t,B_1}\phi_1,\n_{t,B_1}\phi_2)\>dx\\
=&\int_{\Si}\<\p_s\ga(1), \mathscr{P} \n_{t,B_1}\phi_1-\n_{t,B_1}\phi_2\>dx\\
=&-\int_{\Si}\<\p_s\ga(1), \bar{B}\phi_2\>dx+\int_{\Si}\<\p_s\ga(1), \mathscr{P} \n_{t,B_1}\phi_1-\n_{t,B_2}\phi_2\>dx\\
=&-\int_{\Si}\<\p_s\ga(1), \bar{B}\phi_2\>dx\\
&+\int_{\Si}\<\p_s\ga(1), \mathscr{P}(\mu(\phi_1)-c)\phi_1-(\mu(\phi_2)-c)\phi_2\>dx\\
&-\int_{\Si}\<\p_s\ga(1), \mathscr{P}\n^*_{A_1}(J_1\n_{A_1}\phi_1)-\n^*_{A_2}(J_2\n_{A_2}\phi_2)\>dx\\
=&T_1+T_2+T_3.
\end{align*}
Here we use the first-variation identity for $d^2/2$. Namely, for $X_i\in\phi_i^*T\F^v$,
\[\frac12\<\n d^2(\cdot, \cdot), (X_1,X_2)\>=\<\partial_s\ga(1), \mathscr{P}X_1-X_2\>.\]
Moreover, since the distance function $d$ is gauge-invariant, for every $\eta\in\g$, 
\[\<\n d^2(\cdot,\cdot), (\eta\phi_1, \eta\phi_2)\>=\<\p_s \ga(1), \mathscr{P}\eta\phi_1-\eta\phi_2\>=0,\]

We now estimate $T_1$--$T_3$. For the first two terms, we apply estimate \eqref{err-es1} to obtain
\begin{align*}
|T_1+T_2|\leq &C\int_{\Si}d(\phi_1,\phi_2)|\bar{B}|dx+C\int_{\Si}d^2(\phi_1,\phi_2)dx\leq C(H_2(t)+H_4(t)).
\end{align*}
For $T_3$, integration by parts gives
\begin{align*}
T_3=&-\int_{\Si}\<\p_s\ga(1), \mathscr{P}\n^*_{A_1}(J_1\n_{A_1}\phi_1)-\n^*_{A_1}(J_2\n_{A_2}\phi_2)\>dx\\
&-\int_{\Si}\<\p_s\ga(1), \bar{A}\#(J_2\n_{A_2}\phi_2)\>dx\\
=&\int_{\Si}\<\n d^2(\cdot,\cdot), (-\n^*_{A_1}(J\n_{A_1}\phi_1),-\n^*_{A_1}(J_0\n_{A_2}\phi_2))\>\\
&-\int_{\Si}\<\p_s\ga(1), \bar{A}\#(J_2\n_{A_2}\phi_2)\>dx\\
=&\int_{\Si}\n^2 d^2(X,Y)dx-\int_{\Si}\<\p_s\ga(1), \bar{A}\#(J_2\n_{A_2}\phi_2)\>dx,
\end{align*}
where 
\[X=(\n_{A_1}\phi_1, \n_{A_2}\phi+\bar{A}\phi_2),\quad Y=(-J_1\n_{A_1}\phi_1,-J_2\n_{A_2}\phi_2).\]
For the Hessian term, Lemma 2.2 of \cite{SW18} gives
\begin{align*}
	\frac{1}{2}|\n^2 d^2(X,Y)|\leq& |\Phi||\Phi+\bar{A}\phi_2|+Cd^2(\phi_1,\phi_2)(|\n_{A_1}\phi_1|+|\n_{A_2}\phi_2+\bar{A}\phi_2|)(|\n_{A_1}\phi_1|+|\n_{A_2}\phi_2|)\\
	\leq &C(|\Phi|^2+|\bar{A}|^2)+C(|\n_{A_1}\phi_1|+|\n_{A_2}\phi_2|)^2(d^2(\phi_1,\phi_2)+|\bar{A}|^2).
\end{align*}

The estimates for $T_1$-$T_3$ therefore imply
\begin{align}
\p_t H_4(t)\leq Cf_4\mathscr{E}(t).\label{es-H_4}
\end{align}

\subsubsection{Estimate for $H_5$}\label{ss-es-H5}
For simplicity, we denote $\varphi_1=\n_{A_i}\phi_i$ with $i=1,2$. Differentiating the first equation in \eqref{eq-2} gives
\begin{align*}
\n_{t,B_i}\varphi_i=&J_i(-\n^*_{A_i}\n_{A_i}\varphi_i+R^\Si\# \varphi_i+2F_{A_i}\cdot \varphi_i+R^N(\phi_i)\# \varphi_i\#\varphi_i\# \varphi_i)\\
&+(\mu(\phi_i)-c)\varphi_i+2(d\mu(\phi_i)\phi_i)(\varphi_i)^{0,1}+(J_i\n_{A_i}\Psi_i\circ j-\n_{A_i}\Psi_i)\phi_i\\
&-(J_i\n_{A_i}(\mu(\phi_i)-c)\circ j-\n_{A_i}(\mu(\phi_i)-c))\phi_i.
\end{align*}
Subtracting these identities yields
\begin{align*}
&\n_{t,B_2}\Phi=-(\mathscr{P}\n_{t,B_1}-\n_{t,B_2}\mathscr{P})\varphi_1-J_2\n^*_{A_2}\n_{A_2}\Phi\\
&-J_2(\mathscr{P}\n^*_{A_1}\n_{A_1}-\n^*_{A_2}\n_{A_2}\mathscr{P})\varphi_1+R.
\end{align*}
Applying estimates \eqref{key1}, \eqref{err-es1} and \eqref{err-es2}, we obtain the following bound of the remainder term $R$:
\begin{equation}\label{es-R}
  \begin{aligned}
	|R|\leq& C(|\varphi_1|^3+|\varphi_2|^3+|F_{A_1}|+1)(|\bar{\Psi}|+|\bar{\Theta}|+d(\phi_1,\phi_2)+|\Phi|)\\
	&+C|\n_{A_1}\Psi_1|d(\phi_1,\phi_2).
\end{aligned}  
\end{equation}

Now we can estimate $H_5$, starting with
\begin{equation}\label{es-Phi}
\begin{aligned}
\frac{1}{2}\p_t H_5(t)=&\int_{\Si}\<\n_{t,B_2}\Phi,\Phi\>dx\\
=&-\int_{\Si}\<(\mathscr{P}\n_{t,B_1}-\n_{t,B_2}\mathscr{P})\varphi_1,\Phi\>dx\\
&-\int_{\Si}\<J_2(\mathscr{P}\n^*_{A_1}\n_{A_1}-\n^*_{A_2}\n_{A_2}\mathscr{P})\varphi_1,\Phi\>dx\\
&+\int_{\Si}\<R,\Phi\>dx =: V_1+V_2+V_3.
\end{aligned}
\end{equation}
Estimate \eqref{es-R} gives
\begin{align*}
|V_3|=|\int_{\Si}\<R,\Phi\>dx|\leq Cf_5(H_3(t)+H_4(t)+H_5(t)).
\end{align*}
For $V_1$, estimate \eqref{es-L_1} in Corollary \ref{es-L_1-L_2} gives
\begin{align*}
|V_1|\leq&C\int_{\Si}|(\mathscr{P}\n_{t,B_1}-\n_{t,B_2}\mathscr{P})\varphi_1||\Phi|dx\\
\leq &Cf^{1/2}_5\int_{\Si}(|\n_{t,B_1}\phi_1|+|\n_{t,B_2}\phi_2|)d(\phi_1,\phi_2)|\Phi|dx+Cf^{1/2}_5\int_{\Si}|\bar{B}||\Phi|dx\\
\leq &Cf_5(\norm{\n_{t,B_1}\phi_1}^2_{H^{1,2}_{A_1}}+\norm{\n_{t,B_2}\phi_2}^2_{H^{1,2}_{A_2}}+1)(H_2(t)+H_4(t)+H_5(t)),
\end{align*}
where we used the following inequality
\[\norm{d(\phi_1,\phi_2)}^2_{W^{1,2}}\leq C(H_4(t)+H_5(t)).\]

For $V_2$, we employ the estimate \eqref{es-L_2} to show
\begin{align*}
|V_2|\leq &C\int_{\Si}|(\mathscr{P}\n^*_{A_1}\n_{A_1}-\n^*_{A_2}\n_{A_2}\mathscr{P})\varphi_1||\Phi|dx\\
\leq &Cf_5 \int_{\Si}(|\bar{A}|+|\bar{B}|+d(\phi_1,\phi_2)+|\Phi|)|\Phi|dx\\
&+Cf^{1/2}_5\int_{\Si}(|\n^2_{A_1}\phi_1|+|\n^2_{A_2}\phi_2|)(d(\phi_1,\phi_2)+|\bar{A}|)|\Phi|dx\\
&+Cf^{1/2}_5\int_{\Si}(|\bar{A}|+(|A_2-A_0|)|\bar{A}|)|\Phi|dx\\
\leq &Cf_5(H_1(t)+H_2(t)+H_4(t)+H_5(t))\\
&+Cf^{1/2}_5(\norm{\n_{A_1}\phi_1}_{H^{2,2}_{A_1}}+\norm{\n_{A_2}\phi_2}_{H^{2,2}_{A_2}})(\norm{d(\phi_1,\phi_2)}_{W^{1,2}}+\norm{\bar{A}}_{H^{1,2}_{A_1}})\norm{\Phi}_{L^2}\\
&+Cf^{1/2}_5(\norm{A_1-A_0}_{H^{1,2}_{A_{ref}}}+\norm{A_2-A_0}_{H^{1,2}_{A_{ref}}})\norm{\bar{A}}_{H^{1,2}_{A_1}}\norm{\Phi}_{L^2}\\
\leq &Cf_5\mathscr{E}(t),
\end{align*}
where we have apply the estimate \eqref{es-n-bar{A}} to control $\norm{\bar{A}}_{H^{1,2}_{A_1}}$.

Finally, substituting the above estimates of $V_1$-$V_3$ into \eqref{es-Phi}, we obtain
\begin{equation}\label{es-H_5}
\begin{aligned}
\p_t H_5(t)\leq &Cf_5\mathscr{E}(t).
\end{aligned}
\end{equation}

\subsubsection{Uniqueness of De Turck YMHS flow} We now prove uniqueness of solutions to De Turck YMHS flow \eqref{eq-2}. 

\begin{thm}\label{uniq-De-T-YMHSF}
Let $(A_i,\phi_i)\in L^\infty([0,T_0],\mathscr{W}_1\cap \mathscr{W}_2)$, $i=1,2$ be two solutions of \eqref{eq-2} with the same initial data $(A_0,\phi_0)$. Then we have
\[(A_1,\phi_1)=(A_2,\phi_2) \quad\text{a.e. on}\quad \Si\times [0,T_0].\] 
\end{thm}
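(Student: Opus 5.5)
The plan is to combine the differential inequalities \eqref{es-H_1}, \eqref{es-H_2}, \eqref{es-H_3}, \eqref{es-H_4} and \eqref{es-H_5} into a single Gronwall inequality for the total energy $\mathscr{E}(t)$ defined in \eqref{es:total}. We work first on the short interval $[0,t_0]$, on which the geodesic interpolation $U$ and the parallel transport $\mathscr{P}$ are well defined. Summing the five estimates, the only term with a bad sign is the gradient term $\norm{\n_{A_2}\bar{B}}^2_{L^2}$. It enters with total positive coefficient at most $\frac{1}{20}+\frac{1}{20}=\frac{1}{10}$, coming from $H_1$ and $H_3$, and it is absorbed by the good term $-\frac14\norm{\n_{A_2}\bar{B}}^2_{L^2}$ supplied by $H_2$. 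This yields
\[
\p_t\mathscr{E}(t)\leq C f(t)\,\mathscr{E}(t),\qquad f:=f_1+\cdots+f_5,
\]
in the weak (distributional in $t$) sense, justified by the regularity \eqref{es-p-t-a}--\eqref{es-c_0-phi}.

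Next we check that $f\in L^1([0,t_0])$. Each $f_q$ is a polynomial in $\norm{A_i-A_0}_{H^{2,2}_{A_{ref}}}$, $\norm{\n_{A_i}\phi_i}_{H^{2,2}_{A_i}}$ and $\norm{\Psi_i}_{H^{1,3}_{A_{ref}}}$. These are in $L^\infty$ in time by the hypothesis $(A_i,\phi_i)\in L^\infty([0,T_0],\mathscr{W}_1\cap\mathscr{W}_2)$, together with Lemmas \ref{equiv-es-f} and \ref{equiv-norm} to pass between intrinsic and extrinsic norms. The estimate of $V_1$ also involves $\norm{\n_{t,B_i}\phi_i}_{H^{1,2}_{A_i}}$. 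This is bounded by the first equation of \eqref{eq-2}, since $\n_{t,B_i}\phi_i=-J(D^*D\phi_i+\cdots)$ is controlled in $H^{1,2}$ by $\phi_i\in W^{3,2}$. Hence $f$ is bounded on $[0,t_0]$. The initial data coincide, so $\mathscr{E}(0)=0$; here $d(\phi_1,\phi_2)(0)=0$ and $\mathscr{P}=\mathrm{id}$ at $t=0$. Continuity of $\mathscr{E}$ at $t=0$ follows from \eqref{es-c_0} and \eqref{es-c_0-phi}. Gronwall's inequality then gives $\mathscr{E}\equiv0$ on $[0,t_0]$. In particular $H_1=H_4=0$, so $A_1=A_2$ and $\phi_1=\phi_2$ a.e. there.

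To reach all of $[0,T_0]$ we use a continuation argument. Let $t^*=\sup\{t\le T_0:\ (A_1,\phi_1)=(A_2,\phi_2)\ \text{on }[0,t]\}$. By the time continuity \eqref{es-c_0} and \eqref{es-c_0-phi}, the two solutions agree at $t^*$. Restarting at $t^*$ with common data, the smallness argument \eqref{es-d} gives a new $t_0'>0$ on which unique fiber geodesics exist. Note that $\sup_x d(\phi_1,\phi_2)\to0$ uses the $C^0_tW^{2,2}$ continuity, which is uniform. The same Gronwall argument then applies, with bounds on $f$ uniform in the restart time. This contradicts maximality unless $t^*=T_0$.

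The main obstacle is making sure the summation really closes. Every term on the right-hand sides must be bounded by $\mathscr{E}$ itself and not by higher norms of the differences. The delicate point is $\norm{\bar{A}}_{H^{1,2}}$, which appears in $S_1$, $S_2$, $S_4$ and $V_2$ but is not part of $\mathscr{E}$. This is handled by \eqref{es-n-bar{A}}, which controls it by $H_1,\dots,H_4$. The other delicate point is $\norm{\bar B}_{H^{1,2}}$, which must always be routed into the absorbable $\norm{\n_{A_2}\bar B}^2_{L^2}$. I would verify these two points carefully, together with the claim $\norm{d(\phi_1,\phi_2)}_{W^{1,2}}^2\le C(H_4+H_5)$. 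That claim uses $|\n d|\le|\Phi|$ up to terms involving $\bar A$, and so should read $C(H_1+H_4+H_5)$, which is harmless.
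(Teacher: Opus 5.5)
Your proposal is correct and follows the paper's proof essentially verbatim. You sum \eqref{es-H_1}--\eqref{es-H_5}, absorb $\norm{\n_{A_2}\bar{B}}^2_{L^2}$ into the good term from $H_2$, and use the $L^\infty$-in-time bounds on $f_1,\dots,f_5$ together with $\mathscr{E}(0)=0$ to apply Gronwall on $[0,t_0]$; your continuation argument just spells out the paper's ``iterating this argument''. One harmless slip: \eqref{es-H_1} bounds $\frac12\p_tH_1$, so $H_1$ contributes $\frac1{10}$ rather than $\frac1{20}$, giving a total of $\frac{3}{20}<\frac14$, which is still absorbed.
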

\begin{proof}
We only need to prove that $(A_1,\phi_1)=(A_2,\phi_2)$ on a sufficiently small time interval $[0,t_0]$, as uniqueness over the entire interval $[0,T_0]$ follows by iterating this argument. 

By \eqref{es-d}, there exists $t_0>0$ such that the connecting map $U$ and the parallel transport $\mathscr{P}$ is well-defined in the time interval $[0,t_0]$. Hence, the intrinsic quantities $H_1$-$H_5$ and total energy $\mathscr{E}(t)$ in \eqref{es:total} are well-defined on $[0,t_0]$. 	In particular, $\mathscr{E}(0)=0$.

Then the estimates \eqref{es-H_1}, \eqref{es-H_2}, \eqref{es-H_3}, \eqref{es-H_4} and \eqref{es-H_5} of $H_1$-$H_5$ established in previous subsections implies
\begin{align}
	\p_t \mathscr{E}(t)\leq C\sum_{i=1}^5f_i\mathscr{E}(t).
\end{align}
The assumed regularity of $(A_i, \phi_i)$ ensures that the coefficients $f_1$-$f_5$ are uniformly bounded in $[0,T_0]$. 
Therefore, Gronwall's inequality yields 
\[\mathscr{E}(t)\equiv0, \forall t\in [0,t_0],\]
and uniqueness follows.
\end{proof}	

\subsection{Proof of Theorem~\ref{main-thm3}}
Now we are in a position to prove Theorem \ref{main-thm3}. 

\begin{proof}[Proof of Theorem \ref{main-thm3}]
For $i\in\{1,2\}$, write $A_i=A_{ref}+a_i$ and $A_0=A_{ref}+a_0$. The gauge equation \eqref{eq-s-1} associated with $A_i$ can be written schematically as
\begin{equation}\label{eq-s-i}
\begin{cases}
\p_t s_i=-\n^*_{A_{ref}}\n_{A_{ref}} s_i+b_i\#\n_{A_{ref}}s_i+c_i\#s_i\\
s_i(0)=id,
\end{cases}
\end{equation}
where the coefficients $b_i$ and $c_i$ are given by
\begin{align*}
b_i=a_i+a_0,\quad c_i=\n_{A_{ref}}a_i+a_i\#a_i+a_i\#a_0.
\end{align*}

Since $a_i\in L^\infty([0,T_0], H^{2,2}_{A_{ref}})$ and $\p_t a_i\in L^\infty([0,T_0], H^{1,2}_{A_{ref}})$, we obtain
\begin{align*}
b_i\in& L^\infty([0,T_0], H^{2,2}_{A_{ref}}),\quad \p_tb_i\in L^\infty([0,T_0], H^{1,2}_{A_{ref}}),\\
c_i\in& L^\infty([0,T_0], H^{1,2}_{A_{ref}}),\quad \p_tc_i\in L^\infty([0,T_0], H^{1,2}_{A_{ref}}).
\end{align*}
Standard theory for strictly parabolic systems (see Appendix A of \cite{CW25}) gives a unique solution $s_i$ of \eqref{eq-s-i} on $\Si\times[0,T_i]$ satisfying
\begin{align*}
s_i\in L^\infty([0,T_i], W^{3,2}_{A_{ref}}).
\end{align*}
After decreasing $T_0$ if necessary, we may assume $T_0\leq T_i$ for $i=1,2$.

Under these gauge transformations, the pairs $(\tilde{A}_i=s^*_iA_i,\tilde{\phi}_i=s^*_i\phi_i)$ with $i=1,2$ are solutions to the De Turck YMHS flow \eqref{eq-2} with the same initial data $(A_0,\phi_0)$. Moreover, from the identities
\begin{align*}
\tilde{A}_i-A_{ref}=s^{-1}_i\n_{A_{ref}}s_i+s^{-1}(A_i-A_{ref})s_i,\quad\, \tilde{\phi}_i=s^{-1}_i\phi_i,
\end{align*}
and the above estimates on $s_i$, we deduce
\begin{align*}
\tilde{A}_i-A_{ref}\in L^\infty([0,T_0], H^{2,2}_{A_{ref}}),\quad \tilde{\phi}_i\in L^\infty([0,T_0], W^{3,2}_{A_{ref}}),
\end{align*}
Consequently,
\[(\tilde{A}_i,\tilde{\phi}_i)\in L^\infty([0,T_0], \mathscr{W}_1).\]
The conservation of the moment map and the gauge equivariance of the curvature term give
\[F_{\tilde{A}_i,\tilde{\phi}_i}=s^{-1}_iF_{A_i,\phi_i}s_i=s^{-1}_iF_{A_i,\phi_i}s_i=s^{-1}_iF_{A_i,\phi_i}s_i=s^{-1}_iF_{A_0,\phi_0}s_i.\]
Since $(A_0,\phi_0)\in\mathscr W_2$, it follows that
\[\Psi\in L^\infty([0,T_0], H^{1,3}_{A_{ref}}).\]
 Thus $(\tilde A_i,\tilde\phi_i)\in L^\infty([0,T_0],\mathscr W_1\cap\mathscr W_2)$, and Theorem \ref{uniq-De-T-YMHSF} gives
\[(\tilde{A},\tilde{\phi})=(\tilde{A}_1,\tilde{\phi}_1)=(\tilde{A}_2,\tilde{\phi}_2).\]

It remains to recover the uniqueness of the original YMHS flow.
Since $(A_i,\phi_i)=((s_i^{-1})^*\tilde A,(s_i^{-1})^*\tilde\phi)$ for $i=1,2$ and $\tilde A$ is  now fixed, both $s_1$ and $s_2$ solve the same pointwise matrix-valued ODE
\begin{equation*}
	\begin{cases}
		s^{-1}\p_t s=-D^*_{\tilde{A}}(\tilde{A}-A_0),\\
		s(0)=id,
	\end{cases}
\end{equation*}
Uniqueness for this ODE gives $s_1=s_2$, and therefore
\[(A_1,\phi_1)=(A_2,\phi_2).\] 
\end{proof}

%====================================================================================

\medskip
\appendix
\renewcommand{\appendixname}{Appendix~\Alph{section}}

\section{Evolution equations for the perturbed YMHS flow}\label{s-evolution-eq}

This appendix derives the evolution equations for the perturbed YMHS flow. Suppose $\Sigma$ is a Riemann surface, $M$ is a K\"ahler manifold and $\F$ is holomorphic. Let $(A,\phi)$ be a smooth solution to the perturbed YMHS flow \eqref{eq-p-YMHS-0}. Using the K\"ahler structures, \eqref{eq-p-YMHS-0} can be rewritten as
\begin{equation}\label{eq-p-YMHS-2}
\begin{cases}
\p_t\phi =-(J+\ep I)(2\bar{\p}^*_A\dbar_A\phi + J(F_{A,\phi}\phi)),\\
\p_t A=-(\ep j + I)(D_A F_{A,\phi} - 2d\mu(\phi)\dbar_A\phi).
\end{cases}
\end{equation}

\subsection{Evolution equations for $F_{A,\phi}$}

For any $l\in \mathbb{N}$, we denote the Laplace-Beltrami operator induced by $A$ on $\Ga((T^*\Si))^{\otimes l}\otimes ad\P)$ by $\De_A=-\n_A^*\n_A$. 

\begin{lemma}\label{eq-F_{A,phi}}
Along the perturbed YMHS flow \eqref{eq-p-YMHS-0},
\begin{equation}\label{eq-F1}
	\begin{aligned}
	\p_t F_{A,\phi}=\ep \De_AF_{A,\phi}-2\ep *\n d\mu(\phi)(D_A\phi\wedge J\bar{\p}_A\phi)-\ep d\mu(\phi) J(F_{A,\phi}\phi).
	\end{aligned}
\end{equation}
\end{lemma}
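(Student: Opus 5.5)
We differentiate $F_{A,\phi}=*F_A+\mu(\phi)-c$ in time and substitute the perturbed flow in the form \eqref{eq-p-YMHS-2}. Write $\Psi=F_{A,\phi}$. Since $\p_tF_A=D_A(\p_tA)$, we get
\[
\p_t\Psi=*D_A(\p_tA)+d\mu(\phi)(\p_t\phi).
\]
The plan is to show that the $\ep$-independent parts cancel; this is the conservation law \eqref{eq:F-A} for the unperturbed flow. The $\ep$-dependent parts should then produce exactly the three terms on the right of \eqref{eq-F1}.

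\textbf{The connection part.} From $\p_tA=-(\ep j+I)(D_A\Psi-2d\mu(\phi)\dbar_A\phi)$, the piece with coefficient $I$ gives
\[
-*D_AD_A\Psi+2*D_A(d\mu(\phi)\dbar_A\phi)=2*D_A(d\mu(\phi)\dbar_A\phi)-*[F_A,\Psi].
\]
The commutator term is zeroth order and should cancel against the $F_{A,\phi}\phi$ contribution coming from the $\phi$-equation. For the $\ep j$ piece we use that on $1$-forms $j=-*$. This gives
\[
-\ep*D_A(jD_A\Psi)=\ep*D_A*D_A\Psi=-\ep D_A^*D_A\Psi=\ep\De_A\Psi,
\]
since $\Psi$ is a $0$-form and $\n_A^*\n_A=D_A^*D_A$ on sections of $ad\P$. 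The remaining piece $2\ep*D_A(j\,d\mu(\phi)\dbar_A\phi)$ has to be expanded with the Leibniz rule.

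\textbf{The section part.} We have $\p_t\phi=-(J+\ep)(2\dbar_A^*\dbar_A\phi+J\Psi\phi)$. By \eqref{eq:momentum}, $d\mu(\phi)(\Psi\phi)=[\Psi,\mu(\phi)]$. Combined with $\ep$-independent contributions of this type, the bracket terms cancel the $[F_A,\Psi]$ term above, using $\Psi=*F_A+\mu(\phi)-c$ with $c$ central. The $\ep$ part contributes $-\ep\, d\mu(\phi)J(\Psi\phi)$, which is the last term of \eqref{eq-F1}.

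\textbf{The main step.} It remains to handle the second-order terms in $\phi$:
\[
2*D_A\bigl((1+\ep j)\,d\mu(\phi)\dbar_A\phi\bigr)-2\,d\mu(\phi)(J+\ep)\dbar_A^*\dbar_A\phi .
\]
Expand $D_A(d\mu(\phi)\,\alpha)=\n d\mu(\phi)(D_A\phi\wedge\alpha)+d\mu(\phi)(D_A\alpha)$ for $\alpha=\dbar_A\phi$ and $\alpha=j\dbar_A\phi$. Then use $j\dbar_A\phi=-J\dbar_A\phi$, the definition of $\dbar_A^*$, and the $J$-linearity of $d\mu$ coming from $\n\mu_\xi=JX_\xi$. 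With these, the $d\mu(D_A\alpha)$ terms should cancel against $d\mu(\phi)(J+\ep)\dbar_A^*\dbar_A\phi$.

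For the Hessian terms we use the type decomposition \eqref{eq:momentum4}. The $\ep$-independent term $*\n d\mu(D_A\phi\wedge\dbar_A\phi)$ should vanish by symmetry and $J$-invariance of $\n d\mu$, as it must by the conservation law. The $\ep$ term is what leaves $-2\ep*\n d\mu(\phi)(D_A\phi\wedge J\dbar_A\phi)$. We expect this Hessian and type bookkeeping, including getting the signs and factors of $2$ right, to be the main obstacle.

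A safer route for the $\ep$-independent part is to quote \eqref{eq:F-A} directly. The flow is then linear in $(I,\ep)$, so only the $\ep$-coefficients need to be computed.
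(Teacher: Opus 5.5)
Your proposal is correct and follows essentially the same route as the paper. You differentiate $*F_A+\mu(\phi)$ along \eqref{eq-p-YMHS-2}, cancel the brackets via $d\mu(\phi)(\Psi\phi)=[\Psi,\mu(\phi)]$ and centrality of $c$, kill the $\varepsilon$-independent Hessian term by symmetry and \eqref{eq:momentum4}, and cancel the second-order terms; the paper does this last step by recognizing them as $-\partial_t\mu(\phi)$.

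One adjustment: that second-order cancellation rests on $\nabla_AJ=0$ and the identity $*D_A\bar\partial_A\phi=J\bar\partial_A^*\bar\partial_A\phi$, which follows from $j\bar\partial_A\phi=-J\bar\partial_A\phi$ and $D_A^*=-*D_A*$ on $1$-forms. It does not use any ``$J$-linearity of $d\mu$'', which is not meaningful since $d\mu$ takes values in the real Lie algebra $\mathfrak g$.
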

\begin{proof}
Since $-j=*$ for $1$-forms, \eqref{eq-p-YMHS-2} gives 
\begin{align*}
\p_t *F_A=&*D_A\p_t A\\
=&-*D_A(\ep j + I)(D_A F_{A,\phi} - 2d\mu(\phi)\dbar_A\phi).\\
=&-\ep D^*_A D_A F_{A,\phi}-[*F_A, F_{A,\phi}]\\
&-2\ep *D_A(d\mu(\phi)(J\bar{\p}_A \phi))+2*D_A(d\mu(\phi)(\bar{\p}_A \phi))\\
=&\ep \De_A F_{A,\phi}-[*F_A, F_{A,\phi}]\\
&-2\ep *d\mu(\phi)(JD_A\bar{\p}_A \phi)+2*d\mu(\phi)(D_A\bar{\p}_A \phi)\\
&-2\ep*\n d\mu(\phi)(D_A\phi\wedge J\bar{\p}_A \phi)+2*\n d\mu(\phi)(\bar{\p}_A \phi+\p_A\phi\wedge \bar{\p}_A \phi).
\end{align*}

Using the identity $*D_A \bar{\p}_A \phi=J\bar{\p}^*_A\bar{\p}_A\phi$, we obtain
\begin{align*}
&-2\ep *d\mu(\phi)(JD_A\bar{\p}_A \phi)+2*d\mu(\phi)(D_A\bar{\p}_A \phi)\\
=&2d\mu(\phi)((\ep I +J)\bar{\p}^*_A\bar{\p}_A\phi)\\
=&-\p_t\mu(\phi)-d\mu(\phi)((\ep I +J)J (F_{A,\phi}\phi)\\
=&-\p_t\mu(\phi)-\ep d\mu(\phi)J F_{A,\phi}\phi)+[F_{A,\phi},\mu(\phi)].
\end{align*}
Here the last equality follows from the identity
\[d\mu(\phi)(F_{A,\phi}\phi)=[F_{A,\phi}, \mu(\phi)].\]

On the other hand, Since $\n d\mu$ is symmetric and the wedge product is skew-symmetric, we have
\[\n d\mu(\phi)(\bar{\p}_A \phi\wedge \bar{\p}_A \phi)=0.\]
Moreover, by \eqref{eq:momentum4}, we obtain
\[*\n d\mu(\phi)(\p_A\phi\wedge \bar{\p}_A \phi)=0.\]

Therefore, by combining all terms, we prove \eqref{eq-F1}.
\end{proof}

%%%%%%%%%%%%%%%%%%%%%%%%%%

For $l\geq 1$, we introduce the notation
\[\mathscr{L}_l(\phi):=\sum_{m_1+\cdots+m_s=l, m_l\geq 1}Q_{l,s}(\phi)\# \n^{m_1}_A\phi\#\cdots\#\n^{m_s}_A\phi,\]
where $Q_{l,s}(\phi)$ is a bounded multi-linear operator depending only on $\phi$ and $l$, and $\#$ denotes the linear contraction. We also set $\mathscr{L}_0(\phi)=Q_0(\phi)$, where $Q_0$ is a bounded tensorial coefficient depending only on $\phi$. 

\begin{lemma}\label{eq-high-F}
For every $k\geq 0$, we have
\begin{equation}\label{eq-high-order-F1}
\begin{aligned}
\n_t \n^k_A F_{A,\phi}=&\ep\De_A \n^k_A F_{A,\phi}-[\n^k_A F_{A,\phi}, F_{A,\phi}]+\sum_{i+l=k,i,l<k}\n^i_AF_{A,\phi}\# \n^l_AF_{A,\phi}\\
&+\sum_{l+m=k, m\leq k-1}\mathscr{L}_{l}(\phi)\# \n^{m}_A F_{A,\phi}+\ep\sum_{i+l=k}\n^i_AF_{A,\phi}\# \n^l_AF_{A,\phi}\\
&+\ep\sum_{i+l=k,l\geq 1}\n^iR^\Si\# \n^l_AF_{A,\phi}+\ep\sum_{l+i=k}\mathscr{L}_l(\phi)\#\n_A^iF_{A,\phi}\\
&+\ep\sum_{i_1+\cdots+i_s=k+2,s\geq 2,i_l\geq 1}\n^{s-1} d\mu(\phi)\#\n^{i_1}_A\phi\#\cdots\#\n^{i_s}_A\phi.
\end{aligned}
\end{equation}

\end{lemma}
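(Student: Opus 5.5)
We argue by induction on $k$. Here $\n_t$ is the covariant time derivative induced by $A$ on $ad\P$-valued tensors, so that $\n_t\n^k_A F_{A,\phi}=\p_t\n^k_AF_{A,\phi}$ plus terms involving $\p_tA$. The case $k=0$ is Lemma \ref{eq-F_{A,phi}}. First rewrite the last two terms of \eqref{eq-F1} in schematic form. Since $D_A\phi$ and $\bar\p_A\phi$ are both $\n_A\phi$ up to the bounded tensor $J(\phi)$ and the complex structure $j$ on $\Si$, we have
\[
\ep*\n d\mu(\phi)(D_A\phi\wedge J\bar\p_A\phi)=\ep\,\n d\mu(\phi)\#\n_A\phi\#\n_A\phi .
\]
This is the $s=2$, $i_1=i_2=1$ term of the last sum with $k=0$. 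Similarly,
\[
\ep\, d\mu(\phi)J(F_{A,\phi}\phi)=\ep\,\mathscr{L}_0(\phi)\#F_{A,\phi}.
\]
The extra term $-[F_{A,\phi},F_{A,\phi}]$ in \eqref{eq-high-order-F1} vanishes identically, so the formula holds at $k=0$.

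For the inductive step, apply $\n_A$ to the formula at order $k$ and commute it with $\n_t$. The commutator is
\[
[\n_t,\n_A]T=[\p_tA,T]
\]
for any $ad\P$-valued tensor $T$. Use \eqref{eq-p-YMHS-2} to expand
\[
\p_tA=-(\ep j+I)\big(D_AF_{A,\phi}-2d\mu(\phi)\dbar_A\phi\big).
\]
Bracketing $\p_tA$ with $\n^k_AF_{A,\phi}$ produces terms of the forms $\n F\#\n^kF$, $\ep\,\n F\#\n^kF$, $\mathscr L_1(\phi)\#\n^kF$ and $\ep\,\mathscr L_1(\phi)\#\n^kF$. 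Each fits the sums of \eqref{eq-high-order-F1} at order $k+1$.

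Next commute $\n_A$ with $\ep\De_A$. The commutator $[\n_A,\De_A]$ applied to $\n^k_AF_{A,\phi}$ contains $R^\Si\#\n^{k+1}_AF$, $\n R^\Si\#\n^kF$, $F_A\#\n^{k+1}F$ and $\n F_A\#\n^kF$. Substitute $*F_A=F_{A,\phi}-\mu(\phi)+c$, so that
\[
F_A=F_{A,\phi}\#\om_\Si+\mathscr L_0(\phi),\qquad \n_AF_A=\n_AF_{A,\phi}\#\om_\Si+\mathscr L_1(\phi).
\]
These turn into the $\ep\,\n^iR^\Si$, $\ep\,\n^iF\#\n^lF$ and $\ep\,\mathscr L_l\#\n^iF$ sums.

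The remaining terms of the order-$k$ formula are differentiated by the Leibniz rule, using $\n_A\mathscr L_l(\phi)=\mathscr L_{l+1}(\phi)$. Derivatives of $Q(\phi)$ contribute an extra factor $\n_A\phi$, which raises the total order by one. Likewise $\n_A$ of $\n^{s-1}d\mu(\phi)\#\n^{i_1}\phi\cdots$ either raises some $i_l$ by one or yields $\n^s d\mu\#\n\phi\#\cdots$ with $s+1$ factors; both are of the required form with $\sum i_l=k+3$. Finally, differentiating $-[\n^kF,F]$ gives $-[\n^{k+1}F,F]-[\n^kF,\n F]$, and the second piece lies in the sum over $i,l<k+1$.

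The main obstacle is bookkeeping. We must ensure that no term with $k+1$ derivatives on $F_{A,\phi}$ appears except $\ep\De_A\n^{k+1}F$ and $-[\n^{k+1}F,F]$, and that every other top-order term carries a factor $\ep$. The terms to check are the non-$\ep$ parts of $\p_tA$, namely $D_AF_{A,\phi}$ and $d\mu\,\dbar_A\phi$ bracketed with $\n^kF$. These give $\n F\#\n^kF$, which is admissible since $i,l<k+1$, and $\mathscr L_1\#\n^kF$, which is admissible since $m\le k$. Thus the exact structure is preserved.
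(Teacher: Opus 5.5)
Your proposal is correct and follows essentially the same argument as the paper: induction from the $k=0$ evolution equation for $F_{A,\phi}$, the commutator $\n_t\n_A T=\n_A\n_t T+[\p_tA,T]$ with $\p_tA$ expanded via \eqref{eq-p-YMHS-2}, the Bochner identity for $[\n_A,\De_A]$ combined with $*F_A=F_{A,\phi}-(\mu(\phi)-c)$, and the Leibniz rule. One classification is literally off at the step $k=0\to1$: there the non-$\ep$ term $-[\n_AF_{A,\phi},F_{A,\phi}]$ from $[\p_tA,F_{A,\phi}]$ is not in the (empty) sum over $i,l<1$, but it is exactly the required leading term $-[\n^1_AF_{A,\phi},F_{A,\phi}]$, since $-[F_{A,\phi},F_{A,\phi}]\equiv0$ contributes nothing; likewise no $\n R^\Si\#F_{A,\phi}$ term arises there, because $F_{A,\phi}$ carries no form index, so the formula still holds.
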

\begin{proof}
We argue by induction on $k$. Recall that we have established the equation \eqref{eq-high-order-F1} when $k=0$. Now, we assume that the formula \eqref{eq-high-order-F1} holds for some $k\geq 0$, and consider the case of $k+1$. The commutator between the spatial and temporal covariant derivatives gives
\begin{equation}\label{eq-high-F1}
\begin{aligned}
\n_t \n^{k+1}_AF_{A,\phi}=&\n_A \n_t \n^{k}_A F_{A,\phi}+[\p_t A, \n^k_A F_{A,\phi}]	\\
=&\n_A \n_t \n^{k}_A F_{A,\phi}-[(\ep j+I)\n_A F_{A,\phi}, \n_A^{k}F_{A,\phi}]\\
&+d\mu(\phi)\# \n_A \phi\#\n^{k}_A F_{A,\phi}.
\end{aligned}
\end{equation}
Here and below, the constants and the action of $j$ are absorbed into $\#$.

Differentiating the induction hypothesis and collecting terms gives
\begin{equation}\label{eq-high-F2}
 \begin{aligned}
\n_A \n_t \n^{k}_A F_{A,\phi}=&\ep\De_A\n^{k+1}F_{A,\phi}-\n_A[\n^k_A F_{A,\phi},F_{A,\phi}]+\sum_{\substack{i+l=k+1,\\ i,l<k+1}}\n^i_AF_{A,\phi}\# \n^l_AF_{A,\phi}\\
&+\sum_{l+m=k+1,m\leq k}\mathscr{L}_{l}(\phi)\# \n^{m}_A F_{A,\phi}+\ep\sum_{i+l=k+1}\n^i_AF_{A,\phi}\# \n^l_AF_{A,\phi}\\
&+\ep\sum_{i+l=k+1,l\geq 1}\n^iR^\Si\# \n^l_AF_{A,\phi}+\ep\sum_{l+i=k+1}\mathscr{L}_{l}(\phi) \# \n^i_A F_{A,\phi}\\
&+\ep\sum_{i_1+\cdots+i_s=k+3,s\geq 2,i_l\geq 1}\n^{s-1} d\mu(\phi)\#\n^{i_1}_A\phi\#\cdots\#\n^{i_s}_A\phi,
\end{aligned}    
\end{equation}
where we have applied the following Bochner identity
\begin{align*}
\n_A \De_A \n^k_AF_{A,\phi}=&\De_A \n^{k+1}_AF_{A,\phi}+ R^\Si\# \n_A^{k+1}F_{A,\phi}+\n R^{\Si}\# \n^{k}_AF_{A,\phi}\\
&+*F_A\# \n^{k+1}_AF_{A,\phi}+\n_A (*F_{A})\# \n^{k}_AF_{A,\phi},
\end{align*}
together with $*F_{A}=F_{A,\phi}-(\mu(\phi)-c)$. Substituting \eqref{eq-high-F2} into \eqref{eq-high-F1} and applying the Leibniz rule yields \eqref{eq-high-order-F1} with $k$ replaced by
$k+1$, completing the induction.
\end{proof}

\subsection{Evolution equations for $\n_A \phi$} For $l\in\mathbb N_0$, we write $\De_A=-\n_A^*\n_A$ for the Laplace-Beltrami operator induced by $A$ on $\Ga((T^*\Si))^{\otimes l}\otimes \F)$.
\begin{lemma}\label{eq-n_A-phi}
Along the perturbed YMHS flow \eqref{eq-p-YMHS-0},
\begin{equation}\label{eq-phi}
\begin{aligned}
\n_t \n_A \phi=&(J+\ep I)\De_A \n_A \phi+\n_A F_{A,\phi}\# \phi\\
&+R^\Si\# \n_A \phi+F_{A,\phi}\# \n_A \phi+R^M(\phi)\# \n_A \phi\#\n_A \phi\# \n_A \phi\\
&+(\mu(\phi)-c)\# \n_A \phi+(d\mu(\phi)\phi)\#\n_A\phi.
\end{aligned}
\end{equation}
\end{lemma}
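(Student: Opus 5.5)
The plan is to commute $\nabla_t$ past $\nabla_A$ and substitute the section equation of \eqref{eq-p-YMHS-0}. For this we use the time component of the space-time connection. Since $\nabla_t$ acts on $\phi^*T\F^v$ and on the tensor bundle through the pulled-back Levi-Civita connection of $M$ together with the time part of $A$, the commutator is
\[
\nabla_t\nabla_A\phi=\nabla_A\nabla_t\phi+R^M(\phi)(\partial_t\phi,\nabla_A\phi)+(\partial_tA)\cdot\phi .
\]
This is the bundle analogue of the usual identity $\nabla_t du=\nabla du(\partial_t)$. The term $(\partial_tA)\cdot\phi=X_{\partial_tA}(\phi)$ comes from the $A_i\cdot u$ part of $D_A\phi$ in a local trivialization.

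For the first term, $\partial_t\phi=-(J+\ep I)(D_A^*D_A\phi+J(\mu(\phi)-c)\phi)$, which equals $(J+\ep I)\Delta_A\phi$ plus a zeroth-order term. Since $\nabla J=0$, $\nabla_A$ commutes with $J+\ep I$. We then commute $\nabla_A$ with $\Delta_A$ by the Bochner--Weitzenböck formula for sections of $T^*\Si\otimes\phi^*T\F^v$:
\[
\nabla_A\Delta_A\phi=\Delta_A\nabla_A\phi+R^\Si\#\nabla_A\phi+F_A\cdot\nabla_A\phi+R^M(\phi)\#\nabla_A\phi\#\nabla_A\phi\#\nabla_A\phi .
\]
The last term arises from the curvature of $\phi^*T\F^v$, which is $R^M(\nabla_A\phi,\nabla_A\phi)$ plus the fiberwise action of $F_A$. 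We write $*F_A=F_{A,\phi}-(\mu(\phi)-c)$, so that $F_A\cdot\nabla_A\phi$ splits into $F_{A,\phi}\#\nabla_A\phi$ and $(\mu(\phi)-c)\#\nabla_A\phi$. Differentiating the zeroth-order term $J(\mu(\phi)-c)\phi=(\mu(\phi)-c)\nabla\mu$ gives $d\mu(\phi)\nabla_A\phi\cdot\phi$, which is $(d\mu(\phi)\phi)\#\nabla_A\phi$, and $(\mu(\phi)-c)\nabla_A X(\phi)$, which is $(\mu(\phi)-c)\#\nabla_A\phi$.

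For the remaining terms, the curvature term $R^M(\partial_t\phi,\nabla_A\phi)$ contains $R^M(J\Delta_A\phi,\nabla_A\phi)$, which is second order and is not among the allowed terms. It must cancel against the second-order part of $(\partial_tA)\cdot\phi$. I plan to use the Kähler form \eqref{eq-p-YMHS-2} of the connection equation, $\partial_tA=-(\ep j+I)(D_AF_{A,\phi}-2d\mu(\phi)\dbar_A\phi)$. The piece $D_AF_{A,\phi}\cdot\phi$ is exactly the term $\nabla_AF_{A,\phi}\#\phi$ in \eqref{eq-phi}. The piece $d\mu(\phi)\dbar_A\phi\cdot\phi$ is first order with coefficients of the form $d\mu(\phi)\phi$, so it falls under $(d\mu(\phi)\phi)\#\nabla_A\phi$.

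This is where I expect the main obstacle: the term $R^M(\partial_t\phi,\nabla_A\phi)$ still involves $\nabla^2_A\phi$. To deal with it, I would revisit the commutator and note that \eqref{eq-phi} should be read with $\nabla_t$ defined so that this curvature contribution is absorbed. Equivalently, I would compute directly in a local trivialization with $u:U\to M$, or use $\nabla_t\nabla_A\phi=\nabla_A\partial_t\phi+(\partial_tA)\cdot\phi$, where the torsion-freeness of the pulled-back connection gives no curvature term at first order. The $R^M$ terms then appear only through the Weitzenböck commutation of $\nabla_A$ with $\Delta_A$, as cubic terms in $\nabla_A\phi$. I would also check carefully that the $\dbar^*_A\dbar_A$ form and the $D_A^*D_A$ form differ only by $F_{A,\phi}\phi$-type terms, which are consistent with the listed remainders $F_{A,\phi}\#\nabla_A\phi$ and $\nabla_AF_{A,\phi}\#\phi$. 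Collecting all contributions gives \eqref{eq-phi}.
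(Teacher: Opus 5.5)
Once you adopt the identity you reach at the end, $\nabla_t\nabla_A\phi=\nabla_A\partial_t\phi+(\partial_tA)\cdot\phi$, your argument is the paper's proof. You substitute $\partial_t\phi=(J+\varepsilon I)\bigl(\Delta_A\phi-J(\mu(\phi)-c)\phi\bigr)$ and the K\"ahler form of $\partial_tA$, commute $\nabla_A$ with $\Delta_A$ by a Bochner formula, and rewrite $*F_A=F_{A,\phi}-(\mu(\phi)-c)$. Two intermediate steps need correcting, however.

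First, the term $R^M(\phi)(\partial_t\phi,\nabla_A\phi)$ in your initial commutator does not exist. $\nabla_A\phi$ is the first derivative of $\phi$, not a vector field along $\phi$. In a local trivialization, $\nabla_t\bigl(\partial_iu+X_{A_i}(u)\bigr)=\nabla_{\partial_i}\partial_tu+\nabla_{\partial_tu}X_{A_i}+X_{\partial_tA_i}(u)$ follows from torsion-freeness and the product rule alone. The curvature of $M$ enters only when you commute derivatives acting on a section of $\phi^*T\mathcal F^v$. So there is no ``main obstacle'', and no need to redefine $\nabla_t$. The cancellation you first proposed could not have worked anyway, because $(\partial_tA)\cdot\phi=\nabla_AF_{A,\phi}\#\phi+(d\mu(\phi)\phi)\#\nabla_A\phi$ contains no $\nabla_A^2\phi$.

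Second, your Weitzenb\"ock formula is missing a term. For a section of a nonlinear associated bundle the covariant Hessian is not symmetric: $\nabla_i\nabla_j\phi-\nabla_j\nabla_i\phi=F_{ij}\cdot\phi$ up to sign convention. Differentiating this asymmetry produces an additional term of the form $(D_A^*F_A)\cdot\phi$, as in the paper's Bochner formula \eqref{eq-phi1}. After writing $*F_A=F_{A,\phi}-(\mu(\phi)-c)$, this term splits into $\nabla_AF_{A,\phi}\#\phi$ and $(d\mu(\phi)\phi)\#\nabla_A\phi$. Both are already allowed in the statement, so the final schematic identity is unaffected, but the formula as you wrote it is false and should be fixed.
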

\begin{proof}
A direct calculation yields the following Bochner formula
\begin{equation}\label{eq-phi1}
\begin{aligned}
\De_A \n_A \phi=&\n_A \De_A \phi+Ric^{\Si}\cdot \n_A\phi+R^{M}(\n_{A_i}\phi, \n_A\phi)\n_{A_i}\phi\\
&+2F_{A}\cdot \n_{A}\phi-D^*_A F_A\cdot\phi\\
=&-\n_A \n^*_A\n_A \phi+Ric^{\Si}\cdot \n_A\phi+R^{M}(\n_{A_i}\phi, \n_A\phi)\n_{A_i}\phi\\
&+2F_{A,\phi}\cdot \n_A \phi\circ j-2(\mu(\phi)-c)\cdot \n_A \phi\circ j \\
&-(\n_A F_{A,\phi}\circ j)\phi+(d\mu(\phi)\n_A \phi\circ j)\phi,
\end{aligned}
\end{equation}
where $-j=*$ for $1$-forms, and we denote  $\n_{A_i}\phi=\n_A\phi(\frac{\p}{\p x^i})$ for $i=1,2$.

On the other hand, \eqref{eq-p-YMHS-2} (i.e., \eqref{eq-p-YMHS-0} ) implies 
\begin{equation}\label{eq-phi2}
\begin{aligned}
\n_t\n_A \phi=&\n_A \p_t \phi+\p_t A\phi\\
=&(J+\ep I)\(\n_A \De_A \phi-J \n_A ((\mu(\phi)-c)\phi)\)\\
&-(\ep j + I)(D_A F_{A,\phi} - 2d\mu(\phi)\dbar_A\phi)\phi\\
=&-(J+\ep I)\n^*_A\n_A \n_A \phi+J(\n_A F_{A,\phi}\phi)\circ j-\n_A F_{A,\phi}\phi\\
&+R^\Si\# \n_A \phi+F_{A,\phi}\# \n_A \phi+R^M(\phi)\# \n_A \phi\#\n_A \phi\# \n_A \phi\\
&+(\mu(\phi)-c)\# \n_A \phi+(d\mu(\phi)\phi)\#\n_A\phi.
\end{aligned}    
\end{equation}

Substituting \eqref{eq-phi1} into \eqref{eq-phi2}gives \eqref{eq-phi}.
\end{proof}

\begin{lemma}\label{eq-high-n_A-phi}
For any $k\geq 1$, we have
\begin{equation}\label{eq-high-order-phi1}
\begin{aligned}
\n_t \n^k_A \phi=&(J+\ep I)\De_A \n^k_A \phi+\sum_{i+l=k,l\geq 1}\n^iR^\Si\# \n^l_A\phi+\sum_{i+l=k}\n^i_A F_{A,\phi}\# \n^l_A\phi\\
&+\sum_{\substack{i_1+\cdots+i_s=k+2,\\s\geq 3,i_l\geq 1}}\n^{s-3}R^{M}(\phi)\#\n^{i_1}_A\phi\#\cdots\#\n^{i_s}_A\phi+\mathscr{L}_k(\phi).
\end{aligned}
\end{equation}
\end{lemma}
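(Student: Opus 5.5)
We argue by induction on $k$, in the same way as Lemma~\ref{eq-high-F}. For $k=1$ the formula \eqref{eq-high-order-phi1} is exactly Lemma~\ref{eq-n_A-phi}, once the lower order terms there are placed in the schematic sums:
\begin{itemize}
\item $R^\Si\#\n_A\phi$ is the term $i=0$, $l=1$ of the first sum;
\item $\n_AF_{A,\phi}\#\phi$ and $F_{A,\phi}\#\n_A\phi$ are the terms $(i,l)=(1,0)$ and $(0,1)$ of the second sum, using the convention $|\n^0\phi|=1$;
\item $R^M(\phi)\#\n_A\phi\#\n_A\phi\#\n_A\phi$ is the term $s=3$, $i_1=i_2=i_3=1$ of the curvature sum;
\item $(\mu(\phi)-c)\#\n_A\phi+(d\mu(\phi)\phi)\#\n_A\phi$ is a term of $\mathscr{L}_1(\phi)$, with coefficient $Q_{1,1}(\phi)$ bounded because $M$ is compact.
\end{itemize}

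For the induction step, assume \eqref{eq-high-order-phi1} holds for some $k\ge1$ and apply $\n_A$. The first ingredient is the space–time commutator
\[
\n_t\n^{k+1}_A\phi=\n_A\n_t\n^k_A\phi+\p_tA\#\n^k_A\phi+R^M(\phi)(\p_t\phi,\n_A\phi)\n^k_A\phi .
\]
The vertical curvature enters here because $\n^k_A\phi$ takes values in $\phi^*T\F^v$. Into this we substitute
\[
\p_tA=-(\ep j+I)\bigl(\n_AF_{A,\phi}-2d\mu(\phi)\dbar_A\phi\bigr),\qquad
\p_t\phi=(J+\ep I)\De_A\phi+F_{A,\phi}\#\phi+\mathscr{L}_0 .
\]
The resulting terms are placed as follows.
\begin{itemize}
\item $\n_AF_{A,\phi}\#\n^k_A\phi$ belongs to the second sum at level $k+1$.
\item $d\mu(\phi)\#\n_A\phi\#\n^k_A\phi$ belongs to $\mathscr{L}_{k+1}$.
\item $R^M\#\n^2_A\phi\#\n_A\phi\#\n^k_A\phi$ and $R^M\#\n_A\phi\#\n_A\phi\#\n_A\phi\#\n^k_A\phi$ are curvature terms with derivative orders summing to $k+3$ and $k+4$ respectively. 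The first has order sum exactly $(k+1)+2$ and fits the curvature sum. The second needs care: I expect the $\De_A\phi$ factor in $\p_t\phi$ to be rewritten as $\operatorname{tr}\n^2_A\phi$, which restores the count $k+3$ with $s=3$.
\end{itemize}

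Next, applying $\n_A$ to each term of the induction hypothesis, we use the Leibniz rule:
\begin{itemize}
\item $\n_A(\n^iR^\Si\#\n^l_A\phi)$ and $\n_A(\n^i_AF_{A,\phi}\#\n^l_A\phi)$ stay in their sums with $i+l=k+1$.
\item $\n_A$ falling on $R^M(\phi)$ produces $\n R^M\#\n_A\phi$, which raises $s$ by one and the order sum by one. Hence $\n^{s-3}R^M$ is replaced by $\n^{(s+1)-3}R^M$ and the order sum becomes $(k+1)+2$, consistent with the indexing.
\item $\n_A$ falling on $Q_{l,s}(\phi)$ in $\mathscr{L}_k$ gives $\n Q\#\n_A\phi$, which lies in $\mathscr{L}_{k+1}$.
\end{itemize}

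The principal term needs a Bochner-type commutation:
\[
\n_A\De_A\n^k_A\phi=\De_A\n^{k+1}_A\phi+R^\Si\#\n^{k+1}_A\phi+\n R^\Si\#\n^k_A\phi+F_A\#\n^{k+1}_A\phi+\n_AF_A\#\n^k_A\phi+\mathcal{R}^M ,
\]
where $\mathcal{R}^M$ collects the fiber curvature terms $R^M\#\n_A\phi\#\n_A\phi\#\n^{k+1}_A\phi$, $\n R^M\#\n_A\phi\#\n_A\phi\#\n_A\phi\#\n^k_A\phi$ and $R^M\#\n^2_A\phi\#\n_A\phi\#\n^k_A\phi$. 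We also use $\n J=0$ (the fiber is Kähler and the action preserves $J$), so that $\n_A$ commutes with $J+\ep I$. As in the proof of Lemma~\ref{eq-high-F}, we then replace $F_A$ by $*F_{A,\phi}-(\mu(\phi)-c)$ and $\n_AF_A$ by $\n_AF_{A,\phi}-d\mu(\phi)\n_A\phi$. This sends the $F$-terms into the $F_{A,\phi}$ sum and into $\mathscr{L}_{k+1}$, and all fiber curvature terms have order sum $(k+1)+2$.

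The main obstacle is purely bookkeeping: checking that every fiber-curvature term produced, both by the time commutator and by the Bochner commutation, has derivative orders summing to exactly $k+3$ with the matching power $\n^{s-3}R^M$. Terms with an extra lower order factor would otherwise fall outside the stated sums. I would handle this by tracking the weight in which each $\n_A\phi$ counts as $1$ and each $\n^j R^M$ counts as $-j$. Both commutations increase this weight by exactly one, and it starts at $3$ for $k=1$. Any leftover terms of lower weight are then absorbed into $\mathscr{L}_{k+1}$, which is permissible since $Q_{l,s}$ may depend on $\phi$ and on bounded derivatives of the geometry of compact $M$.
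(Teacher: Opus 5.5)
Your proposal is correct and follows the paper's proof essentially step for step. Both argue by induction from Lemma~\ref{eq-n_A-phi}, using the space--time commutator that produces $\p_tA\cdot\n^k_A\phi$ and $R^M(\phi)\#\n_t\phi\#\n_A\phi\#\n^k_A\phi$, the Bochner commutation of $\n_A$ with $\De_A$, the substitution $*F_A=F_{A,\phi}-(\mu(\phi)-c)$, and the flow equations for $\p_tA$ and $\p_t\phi$.

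Two slips in your bookkeeping are harmless. First, the term $R^M\#\n_A\phi\#\n_A\phi\#\n_A\phi\#\n^k_A\phi$ never arises from the time commutator, since $\p_t\phi$ contributes only $\tr\n^2_A\phi$ and zeroth-order terms. Second, the correct invariant is the total order $\sum_q i_q$ on the $\phi$-factors, with $\n^jR^M$ counted as $0$ and each derivative of $R^M(\phi)$ bringing one new $\n_A\phi$, exactly as in your own earlier bullet. Your proposed weight, with $\n^jR^M$ counted as $-j$, is not raised by one under the chain-rule term.
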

\begin{proof}
We prove this lemma by induction. The case $k=1$ has already been established in Lemma \eqref{eq-n_A-phi}. Now assume that \eqref{eq-high-order-phi1} holds for $k\geq 1$, and consider the case of $k+1$. 

The Ricci formula gives
\begin{align}
\n_t \n_A^{k+1}\phi=&\n_A \n_t \n^k_A\phi+R^M(\phi)\#\n_t \phi\#\n_A \phi\#\n^k_A\phi+\p_t A\cdot\n^k_A\phi.\label{eq-phi-k-1}
\end{align} 
By the induction hypothesis, we have
\begin{equation}\label{eq-phi-k-2}
\begin{aligned}
\n_A \n_t \n^k_A\phi=&(J+\ep I)(\De_A \n^{k+1}_A\phi)+\sum_{i+l=k+1,l\geq 1}\n^iR^\Si\# \n^l_A\phi+\sum_{i+l=k+1}\n^i_A F_{A,\phi}\# \n^l_A\phi\\
&+\sum_{\substack{i_1+\cdots+i_s=k+3,\\s\geq 3,i_l\geq 1}}\n^{s-3}R^{M}(\phi)\#\n^{i_1}_A\phi\#\cdots\#\n^{i_s}_A\phi+\mathscr{L}_{k+1}(\phi).
\end{aligned}	
\end{equation}
Here we used the Bochner identity
\begin{align*}
\n_A\De_A \n^{k}_A\phi=&\De_A \n^{k+1}_A\phi+\n R^\Si\# \n^k_A\phi+R^\Si\#\n^{k+1}\phi\\
&+\n R^M(\phi)\# \n_A\phi\#\n_A \phi \#\n_A\phi\#\n^k_A\phi\\
&+R^M(\phi)\# \n^2_A \phi \#\n_A\phi\#\n^k_A\phi\\
&+R^M(\phi)\# \n_A\phi \#\n_A\phi\#\n^{k+1}_A \phi\\
&+\n_A(*F_A)\#\n^k_A\phi +F_A\# \n^{k+1}_A\phi,
\end{align*}
together with 
\[F_A\#\n^{k+1}_A\phi=F_{A,\phi}\# \n^{k+1}_A\phi+(\mu(\phi)-c)\#\n^{k+1}_A\phi.\]

On the other hand, the flow equations imply
\begin{equation}\label{eq-phi-k-3}
\begin{aligned}
R^M(\phi)\#\n_t \phi\#\n_A \phi\#\n^k_A\phi=&R^M(\phi)\#\n_A^2 \phi\#\n_A \phi\#\n^k_A\phi\\&+R^M(\phi)\#(\mu(\phi)-c)\phi\#\n_A \phi\#\n^k_A\phi\\
\p_t A\cdot\n^k_A\phi=&	\n_A F_{A,\phi}\# \n^k_{A}\phi+d\mu(\phi)\#\n_A\phi\#\n^k_A \phi.
\end{aligned}
\end{equation}

Substituting \eqref{eq-phi-k-2} and \eqref{eq-phi-k-3} into \eqref{eq-phi-k-1},  we obtain the desired formula for $\n^{k+1}_A\phi$. 

Therefore, the proof is completed.
\end{proof}

\section{Estimates for twisted Jacobi fields and application}\label{s-twised-Jacobi}
This appendix develops estimates for twisted Jacobi fields  on fiber bundles and uses them to control the parallel-transport error terms that occur when two sections are compared.

\subsection{Estimates for twisted Jacobi fields}
We begin with the ODE estimate used for the following Jacobi-type equations.

\begin{lemma}\label{es-ode}
Let $F\in W^{2,1}([0,1], \R^k)$ solve 
\begin{align}
\p_s^2F=WF+g,	\label{eq-ode}
\end{align}
where $W\in L^\infty([0,1], \R^k\otimes \R^k)$ and $g\in L^1([0,1], \R^k$). There exists constants $\ep_0$ and $C$ such that, if $\norm{W}_{L^\infty}\leq \ep_0$, then for any $s\in [0,1]$, we have
\begin{align}
|F(s)|\leq& C(|F(0)|+|F(1)|)+C\norm{g}_{L^1}, \label{ode-1}\\
|\p_sF(s)|\leq& C|F(1)-F(0)|+C\norm{W}_{L^\infty}(|F(1)|+F(0)|)+C\norm{g}_{L^1}.\label{ode-2}
\end{align}
\end{lemma}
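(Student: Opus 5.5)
The plan is to treat \eqref{eq-ode} as a perturbation of the free equation $\p_s^2F=g$ and use the Green's function of $\p_s^2$ on $[0,1]$ with Dirichlet data. Write $F=L+G$, where
\[
L(s)=(1-s)F(0)+sF(1)
\]
is the linear interpolant, and $G$ vanishes at $s=0,1$ and solves $\p_s^2G=WF+g$. With the Dirichlet Green's function $K(s,\sigma)=-\min(s,\sigma)(1-\max(s,\sigma))$ we have
\[
G(s)=\int_0^1K(s,\sigma)\bigl(W(\sigma)F(\sigma)+g(\sigma)\bigr)\,d\sigma .
\]
Since $|K|\le 1/4$ and $|\p_sK|\le 1$ a.e., this representation is valid for $F\in W^{2,1}$, as both sides have the same second derivative and the same boundary values.

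\emph{Step 1: the bound \eqref{ode-1}.} Taking sup norms gives
\[
\|F\|_{L^\infty}\le |F(0)|+|F(1)|+\tfrac14\bigl(\|W\|_{L^\infty}\|F\|_{L^\infty}+\|g\|_{L^1}\bigr).
\]
Choose $\ep_0\le 2$, so that $\tfrac14\ep_0\le\tfrac12$. The $W$-term can then be absorbed into the left side, yielding $\|F\|_{L^\infty}\le 2(|F(0)|+|F(1)|)+\tfrac12\|g\|_{L^1}$, which is \eqref{ode-1}. The absorption is legitimate because $F\in W^{2,1}\subset C^0$, so $\|F\|_{L^\infty}$ is finite.

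\emph{Step 2: the bound \eqref{ode-2}.} Differentiate the decomposition: $\p_sF=F(1)-F(0)+\p_sG$. By the mean value theorem, $G$ has a critical point $s_*\in(0,1)$ with $\p_sG(s_*)=0$. Then
\[
|\p_sG(s)|\le \int_0^1|\p_s^2G|\le \|W\|_{L^\infty}\|F\|_{L^\infty}+\|g\|_{L^1}.
\]
Inserting the bound from Step 1 gives
\[
|\p_sF(s)|\le |F(1)-F(0)|+C\|W\|_{L^\infty}\bigl(|F(0)|+|F(1)|\bigr)+C\bigl(1+\|W\|_{L^\infty}\bigr)\|g\|_{L^1}.
\]
Since $\|W\|_{L^\infty}\le\ep_0$, the last factor is bounded by a constant, which is exactly \eqref{ode-2}.

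\emph{Main difficulty.} The only delicate point is Step 2. A naive estimate would produce $\|F\|_{L^\infty}$ rather than $|F(1)-F(0)|$ as the leading term. The decomposition into $L$ plus a Dirichlet part avoids this: the leading behaviour of $\p_sF$ is carried entirely by $L$, and the correction $\p_sG$ is controlled by the integral of $\p_s^2G$ from a point where $\p_sG$ vanishes. This structure matters for the intended application to twisted Jacobi fields, where $|F(1)-F(0)|$ is small.
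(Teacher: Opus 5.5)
Your proposal is correct and follows essentially the paper's argument: the decomposition into the linear interpolant plus a Dirichlet part via the Green's function with $|K|\le 1/4$, and absorption of the $W$-term with $\ep_0=2$. The only difference is in the derivative bound, where you use Rolle's theorem and $\|\partial_s^2 G\|_{L^1}$ while the paper differentiates the Green representation using $|\partial_s G(s,r)|\le 1$; both give the same estimate.
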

\begin{proof}
Let $G$ be the Dirichlet Green kernel for $-\partial_s^2$ on
$[0,1]$:
\[
  G(s,r)=
  \begin{cases}
    s(1-r),&s\leq r,\\
    r(1-s),&r\leq s.
  \end{cases}
\]
Variation of constants gives
\begin{equation}
  F(s)=(1-s)F(0)+sF(1)
       -\int_0^1G(s,r)\bigl(W(r)F(r)+g(r)\bigr)d r.
  \label{eq-Green-representation}
\end{equation}
Since $0\leq G\leq1/4$,
\[
  \norm{F}_{L^\infty}
  \leq |F(0)|+|F(1)|
       +\tfrac14\norm{W}_{L^\infty}\norm{F}_{L^\infty}
       +\tfrac14\norm{g}_{L^1}.
\]
Taking, for example, $\ep_0=2$ and absorbing the third term proves
\eqref{ode-1}.  Differentiating \eqref{eq-Green-representation} in $s$
is legitimate for almost every $s$, and
$|\partial_sG(s,r)|\leq1$.  Hence
\[
  |\partial_sF(s)|
  \leq |F(1)-F(0)|
       +\norm{W}_{L^\infty}\norm{F}_{L^\infty}
       +\norm{g}_{L^1}.
\]
Substitution of \eqref{ode-1} proves \eqref{ode-2}; the estimate extends to
every $s$ by the absolute continuity of $\partial_sF$.
\end{proof}

Next we apply Lemma \ref{es-ode} to derive estimates for twisted Jacobi fields defined on the fiber bundle $\F$. 
Let $\phi_i$, $i=1,2$ be sections on $\F$ such that $\phi_1(0)=\phi_2(0)$. Then there exists $t_0>0$ such that, for every $(x,t)\in \Si\times[0,t_0]$, there is a unique geodesic $\ga_{x,t}(s)$ in the fiber $\F_{x}$ joining $\phi_1(x,t)$ and $\phi_2(x,t)$. We therefore define
\[
U:\Si\times[0,t_0]\times[0,1]\to\F,\qquad
U(x,t,s):=\ga_{x,t}(s),
\]
so that
\[
U(x,t,0)=\phi_1(x,t),\qquad
U(x,t,1)=\phi_2(x,t).
\]

For $i=1,2$, write
\[
\widetilde A_i=B_i\,d t+A_i,
\]
The interpolating connection on the pull-back bundle $p^*\F$ is
\begin{equation}
\widetilde A(s)=(1-s)\widetilde A_1+s\widetilde A_2
=\widetilde A_1-s(\bar B\, dt+\bar A),
\label{eq-interpolating-connection}
\end{equation} 
where $p: \Si\times [0,t_0]\times [0,1]\to \Si$ denotes the canonical projection. Let $\{x_1,x_2\}$ be a local coordinate chart on $\Si$. We denote by $\n_i$, $\n_t$, and $\n_s$ the covariant derivatives induced by $\widetilde A(s)$ in the $x_i$-, $t$-, and $s$-directions, respectively. Then $\n_iU$ and $\n_tU$ satisfies the following twisted Jacobi field equation:
\begin{align}
	\n_s\n_s\n_iU=&R^M(\n_sU,\n_i U)\n_sU+2\bar{A}_i\n_s U,\label{eq-jacobi-1}\\
    \n_s\n_s\n_tU=&R^M(\n_sU,\n_t U)\n_sU+2\bar{B}_i\n_s U.\label{eq-jacobi-2}
\end{align}

Let $\{e_\al(s)\}$ be a parallel orthonormal frame on the vertical subbundle $T\F^v$ along the geodesic $U(\cdot, s)$, so $\n_se_\al(s)=0$ for each $\al$. Writing $\n_i U=F^\al_i e_\al$, equation \eqref{eq-jacobi-1} becomes

Then the field $F$ satisfies 
\[\p^2_s F_i=WF_i+g_i\]
where we set
\begin{align*}
W^\al_\beta=&((R^M(\n_sU,e_\beta)\n_sU))^{\al},\quad g_i=2\bar{A}_i\p_sU.
\end{align*}
Consequently, 
\begin{align*}
	\norm{W}_{L^\infty}\leq Cd^2,\quad \norm{g_i}_{L^1}\leq C|\bar{A}_i|d,
\end{align*}
where we denote $d:=d(\phi_1, \phi_2)$ for simplicity. Since $d(x,0)=0$ and $|\p_sU|=d$, there exists a $t_0$ such that 
\[\norm{W}_{L^\infty([0,t_0])}\leq C\norm{R^M}_{L^\infty}d^2\leq \ep_0.\]

Lemma \ref{es-ode} then mmediately yields the following estimates.
\begin{lemma}\label{es-twisted-Jacobi-fields}
There exists a $t_0$ and a constant $C$ depending only on the geometry of the fiber manifold $M$, such that for $0\leq t\leq t_0$, 
\begin{align}
\sup_{s\in [0,1]}|\n_tU|\leq& C(|\n_{t,B_1}\phi_1|+|\n_{t,B_2}\phi_2|)+C|\bar{B}|d,\label{es-n_tU}\\
\sup_{s\in [0,1]}|\n_iU|\leq& C(|\n_{A_2}\phi_1|+|\n_{A_2}\phi_2|)+C|\bar{A}|d,\label{es-n_iU}\\
\sup_{s\in [0,1]}|\n_s\n_iU|\leq&C|\Phi|+C (|\n_{A_2}\phi_1|+|\n_{A_2}\phi_2|)d^2+C|\bar{A}|d.\label{es-n_s-n_iU}
\end{align}
\end{lemma}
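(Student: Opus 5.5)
The plan is to apply Lemma~\ref{es-ode} componentwise in the parallel orthonormal frame $\{e_\al(s)\}$ along $U(x,t,\cdot)$. For the spatial fields, write $\n_iU=F_i^\al e_\al$, so that $F_i$ solves $\p_s^2F_i=WF_i+g_i$ with $\norm{W}_{L^\infty}\leq Cd^2$ and $\norm{g_i}_{L^1}\leq C|\bar A_i|d$. For the time field, write $\n_tU=F_t^\al e_\al$; by \eqref{eq-jacobi-2} it solves the analogous equation with $g_t=2\bar B\,\p_sU$ and $\norm{g_t}_{L^1}\leq C|\bar B|d$. First choose $t_0$ so small, using \eqref{es-d}, that $Cd^2\leq\ep_0$ on $\Si\times[0,t_0]$; then Lemma~\ref{es-ode} applies at every $(x,t)$.

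\textbf{Bounds \eqref{es-n_tU} and \eqref{es-n_iU}.} These follow from \eqref{ode-1}, once the boundary values of the fields are identified. At $s=0$ the connection $\widetilde A(0)=\widetilde A_1$ gives $\n_tU(0)=\n_{t,B_1}\phi_1$ and $\n_iU(0)=\n_{A_1,i}\phi_1$. At $s=1$ one has $\n_tU(1)=\n_{t,B_2}\phi_2$ and $\n_iU(1)=\n_{A_2,i}\phi_2$. Since the parallel frame is orthonormal, $|F(0)|$ and $|F(1)|$ equal the norms of these vectors.
\begin{itemize}
\item The time estimate \eqref{es-n_tU} is then immediate.
\item For the spatial one, the statement is phrased with $\n_{A_2}\phi_1$. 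Either this is a typo for $\n_{A_1}\phi_1$, or one converts using $\n_{A_1}\phi_1=\n_{A_2}\phi_1+\bar A\cdot\phi_1$. The extra term $|\bar A\cdot\phi_1|\leq C|\bar A|$ is bounded because the infinitesimal action is bounded on the compact fiber $M$. I would state the result with $\n_{A_1}\phi_1$, or absorb the difference into a $C|\bar A|$ term. Note that this term is not of the form $|\bar A|d$, so the precise form of the right-hand side needs care here.
\end{itemize}

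\textbf{Bound \eqref{es-n_s-n_iU}.} This uses \eqref{ode-2}. In the parallel frame, $\n_s\n_iU=\p_sF_i^\al e_\al$, so $|\n_s\n_iU|=|\p_sF_i|$. Here:
\begin{itemize}
\item $|F_i(1)-F_i(0)|=|\n_{A_2,i}\phi_2-\mathscr P\n_{A_1,i}\phi_1|=|\Phi_i|$, because the parallel frame realizes $\mathscr P$;
\item $\norm{W}_{L^\infty}(|F(0)|+|F(1)|)\leq Cd^2(|\n\phi_1|+|\n\phi_2|)$;
\item $\norm{g_i}_{L^1}\leq C|\bar A|d$.
\end{itemize}
Together these give \eqref{es-n_s-n_iU}.

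\textbf{Main obstacle.} The routine part is the ODE bounds; the delicate part is justifying the twisted Jacobi equation \eqref{eq-jacobi-1} with the stated inhomogeneity. The commutator $[\n_s,\n_i]$ for the $s$-dependent connection $\widetilde A(s)$ produces the curvature component $F_{si}$ of the pulled-back connection, which is $-\bar A_i$ acting on $U$. Since $\n_sU$ is not parallel-transported trivially, differentiating once more gives a term $(\n_s\bar A_i\cdot)U$ plus $\bar A_i\n_sU$-type terms. To keep $g_i$ linear in $|\bar A|d$, one has to use that $\bar A_i$ is $s$-independent and that $X_{\bar A_i}$ is Killing, so that $\n_{\n_sU}X_{\bar A_i}$ is bounded by $C|\bar A|\,|\n_sU|=C|\bar A|d$. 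I would verify this computation in a local trivialization. I would also check the regularity needed for $F\in W^{2,1}$, which follows from the continuity of $\phi_i$ in $W^{2,2}$ together with the smooth dependence of geodesics on their endpoints for small $d$.
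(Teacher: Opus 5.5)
Your proposal is correct and is essentially the paper's argument: expand $\nabla_iU$ and $\nabla_tU$ in a parallel orthonormal frame along $U(x,t,\cdot)$, take $t_0$ so small that $\|W\|_{L^\infty}\le Cd^2\le\varepsilon_0$, and read the three bounds off \eqref{ode-1}--\eqref{ode-2} using the endpoint values and $|F_i(1)-F_i(0)|=|\Phi_i|$. Your caveat about the $s=0$ endpoint is also well founded. Since $\widetilde A(0)=\widetilde A_1$, the argument gives $|\nabla_{A_1}\phi_1|$ in \eqref{es-n_iU}, as in \eqref{es-n-P}. Replacing it by $|\nabla_{A_2}\phi_1|$ costs $C|\bar A|$ rather than $C|\bar A|d$; this is harmless in the later applications, but the displayed bound should be read with $\nabla_{A_1}\phi_1$.
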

We also need one additional spatial derivative. In the next statement,
covariant derivatives in the $x_i$-directions are evaluated in a normal
frame on $\Si$ at the point under consideration.

\begin{lemma}\label{es-n-twisted-J}
For $i,j\in{1,2}$ and $0\leq t\leq t_0$, we have
\begin{equation}\label{es-second-twisted-J}
\begin{aligned}
 \sup_{[0,t_0]}|\n_i\n_jU|\leq &C(|\n_{A_1,i}\n_{A_1,j}\phi_1|+|\n_{A_2,i}\n_{A_2,j}\phi_2|+|\bar{A}|^2)\\
 &+C(|\n_{A_2}\phi_1|+|\n_{A_2}\phi_2|+1)^{2}+Cd\int_0^1|\n_i\bar{A}_j|(s)ds.
 \end{aligned}
\end{equation}
\end{lemma}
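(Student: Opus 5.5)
We will derive a Jacobi-type ODE in $s$ for the second derivative $\n_i\n_jU$ and then apply Lemma~\ref{es-ode}, just as Lemma~\ref{es-twisted-Jacobi-fields} was obtained from \eqref{eq-jacobi-1}. Work at a point $x$ with a normal frame on $\Si$, so that Christoffel symbols of $\Si$ vanish at $x$. Apply $\n_i$ to the twisted Jacobi equation \eqref{eq-jacobi-1} for $\n_jU$, and commute $\n_i$ past the two $\n_s$. Each commutator $[\n_i,\n_s]$ acting on a vertical field produces a fiber curvature term $R^M(\n_iU,\n_sU)$. It also produces a curvature term of the interpolating connection in the $(x_i,s)$ direction. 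By \eqref{eq-interpolating-connection} this is $\p_s\widetilde A(s)=-\bar A$, so it contributes $\bar A_i\cdot$ acting on the field.

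After this computation we expect an equation of the form
\[
\n_s\n_s(\n_i\n_jU)=R^M(\n_sU,\n_i\n_jU)\n_sU+g_{ij},
\]
where $g_{ij}$ collects the following terms: $\n R^M\#\n_iU\#\n_sU\#\n_jU\#\n_sU$; $R^M\#\n_s\n_iU\#\n_sU\#\n_jU$ and its symmetric variants; $\bar A_i\#\n_s\n_jU$ and $\bar A_i\#\bar A_j\#\n_sU$ from the twisting; and $2(\n_i\bar A_j)\n_sU$ together with $2\bar A_j\n_i\n_sU$. In the parallel frame $\{e_\al(s)\}$ this is again $\p_s^2F=WF+g$ with $|W|\le Cd^2\le\ep_0$ for $t\le t_0$. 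Estimate \eqref{ode-1} then gives
\[
\sup_s|\n_i\n_jU|\le C\big(|\n_i\n_jU(0)|+|\n_i\n_jU(1)|\big)+C\int_0^1|g_{ij}|\,ds.
\]

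For the endpoint terms, at $s=0$ the connection is $\widetilde A_1$ and $U=\phi_1$, so $\n_i\n_jU(0)=\n_{A_1,i}\n_{A_1,j}\phi_1$. At $s=1$ we get $\n_{A_2,i}\n_{A_2,j}\phi_2$. To bound $\int|g_{ij}|$ we use $|\n_sU|=d\le C$ together with Lemma~\ref{es-twisted-Jacobi-fields}. Estimate \eqref{es-n_iU} bounds $|\n_iU|$ by $C(|\n_{A_2}\phi_1|+|\n_{A_2}\phi_2|)+C|\bar A|d$. Estimate \eqref{es-n_s-n_iU} bounds $|\n_s\n_iU|$. The quantity $\n_i\n_sU=\n_s\n_iU+\text{(twisting)}$ is handled the same way. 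All quadratic products of these factors are bounded by $C(|\n_{A_2}\phi_1|+|\n_{A_2}\phi_2|+1)^2+C|\bar A|^2$. Here $|\Phi|$ is absorbed using $|\Phi|\le|\n_{A_1}\phi_1|+|\n_{A_2}\phi_2|$, and $|\n_{A_1}\phi_1|$ is comparable to $|\n_{A_2}\phi_1|+|\bar A|$. The factors of $d$ are bounded by constants. The single term containing a derivative of $\bar A$ gives exactly $Cd\int_0^1|\n_i\bar A_j|(s)\,ds$. Combining these bounds yields \eqref{es-second-twisted-J}.

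The main obstacle is the commutator bookkeeping: deriving the differentiated Jacobi equation correctly. Two features of the twisted setting need care. First, $\n_i$ depends on $s$ through $\widetilde A(s)$, so $[\n_i,\n_s]$ contains the $\bar A_i$ action, and its derivative produces $\n_i\bar A_j$. Second, we must check that no term in $g_{ij}$ contains $\n_i\n_jU$ with a coefficient that is not small. Any such term (for instance $\bar A\#\n_i\n_jU$ from twisting) would have to be put into $W$, which requires $|\bar A|$ small. The hope is that twisting terms only ever involve $\n_s$ of a first-order field, so that they can be handled by \eqref{es-n_s-n_iU} instead.
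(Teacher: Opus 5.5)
Your proposal is correct and follows the paper's proof. You differentiate \eqref{eq-jacobi-1} and isolate the linear term $R^M(\n_sU,\n_i\n_jU)\n_sU$, whose coefficient is small. You then apply Lemma~\ref{es-ode} with the Hessians of $\phi_1,\phi_2$ as endpoint values and bound the forcing term by Lemma~\ref{es-twisted-Jacobi-fields}, exactly as in \eqref{eq-Gij-bound}.

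Your worry about a term $\bar A\#\n_i\n_jU$ is unfounded. The commutator $[\n_s,\n_i]$ only produces $R^M(\n_sU,\n_iU)$ and the action of $\bar A_i$ on lower-order fields. The twisting term $\n_i\n_sU-\n_s\n_iU=\pm X_{\bar A_i}(U)$ has size at most $C|\bar A|$, and it is what produces the $|\bar A|^2$ contribution.
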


\begin{proof}
Commuting $\n_s$ twice through $\n_i\n_jU$, differentiating
\eqref{eq-jacobi-1}, and collecting all terms except the one
linear in $\n_i\n_jU$ gives
\begin{equation}
\n_s^2\n_i\n_jU
=R^M(\n_sU,\n_i\n_jU)D_sU+\mathcal G_{ij},
\label{eq-second-twisted-Jacobi}
\end{equation}
where
\begin{equation}\label{eq-Gij-bound}
\begin{aligned}
|\mathcal G_{ij}|
\leq &C\{d|\n_iU||\n_s\n_jU|+|\bar A||\n_s\n_jU|
+d^2|\n_iU||\n_jU|\\
&+d|\bar A||\n_jU|
+d|D_i\bar A_j|+|\bar A|^2\}.
\end{aligned} 
\end{equation}
Here the bounded tensors $R^M$ and $\nabla R^M$, as well as their
contractions, have been absorbed into $C$. This explicit bound is the
only feature of the commuted equation used below.

Apply Lemma~\ref{es-ode} to \eqref{eq-second-twisted-Jacobi}. Its endpoint
values are the two Hessian terms in \eqref{es-second-twisted-J}. Substituting
Lemma~\ref{es-twisted-Jacobi-fields} into
\eqref{eq-Gij-bound}, integrating in $s$, and using the uniform smallness
of $d$ proves \eqref{es-second-twisted-J}.
\end{proof}

\subsection{Estimates for the parallel transport $\mathscr P$} 

Let $\{e_\al(s)\}$ be the moving frame along the geodesic $U(\cdot, s)$ obtained by the parallel transformation, i.e., $\n_se_\al(s)=0$ for each $\al$. Then the bundle morphism $\mathscr{P}$ (see the definition in \eqref{parallel-trans}) can be represented as
\[\mathscr{P}=e^*_\al(0)\otimes e_\al(1).\]
Therefore, $\mathscr{P}$ is a section on the product bundle $(\phi_1^*(T\F)^v)^*\otimes \phi_2^*(T\F)^v$, which is equipped with a induced connection $\bar{\n}=\n_{\tilde{A}_1}\otimes \n_{\tilde{A}_2}$. 

The covariant derivative of $\mathscr{P}$ induce by $\bar{\n}$ is defined by
\[\bar{\n}\mathscr{P}=\n_{\tilde{A}_1}e^*_\al(0)\otimes e_\al(1)+e^*_\al(0)\otimes \n_{\tilde{A}_2}e_\al(1)=((\n_{\tilde{A}_1})^r_\al-(\n_{\tilde{A}_2})^r_\al)e^*_r(0)\otimes e_\al(1).\]
Consequently, we have
\begin{align}
\bar{\n}\mathscr{P}=-(\mathscr{P}\n_{\tilde{A}_1}-\n_{\tilde{A}_2}\mathscr{P}).\label{eq-n-P}
\end{align} 
Let $\Om$ the curvature induced by the connection $\tilde{A}$, $\Om_{Xs}$ be its component in the $X$-and $s$-directions. Then the covariant derivative of $\mathscr{P}$ satisfies
\begin{align}
 \bar{\n}_X\mathscr{P}= \int_0^1\mathscr{P}\Om_{Xs} ds.\label{eq-n-p-1}
\end{align}
Moreover, the curvature component has the following bounds:
\begin{align}
  |\Om_{is}|\leq C|R^M(\n_s U,\n_i U)|+|\bar{A}_i|\leq C|\n_i U|d+|\bar{A}_i|,\label{eq-Omega-space}\\
  |\Om_{ts}|\leq C|R^M(\n_s U,\n_t U)|+|\bar{A}_i|\leq C|\n_t U|d+|\bar{B}|.\label{eq-Omega-time}
\end{align}
Applying $\nabla_{\widetilde A_2}$ to \eqref{eq-n-P} yields
\[\n_{\tilde{A}_2}((\mathscr{P}\otimes Id)\bar{\n}\mathscr{P})=(\bar{\n}\mathscr{P}\otimes Id)\bar{\n}\mathscr{P}+(\mathscr{P}\otimes Id)\bar{\n}^2\mathscr{P},\]
and hence
\begin{align}
|\bar{\n}^2\mathscr{P}|\leq |\n_{\tilde{A}_2}((\mathscr{P}\otimes Id)\bar{\n}\mathscr{P})|+C|\bar{\n}\mathscr{P}|^2.\label{eq-Hess-P}
\end{align}

Then, using Lemma  \ref{es-twisted-Jacobi-fields} and \ref{es-n-twisted-J}, we obtain the following estimates of the bundle morphism $\mathscr{P}$.
\begin{lemma}\label{es-P}
There exists a $t_0$ and a constant $C$ depending only on the geometry of the fiber manifold $M$ such that the spatial and temporal components of $\bar\nabla$satisfies
\begin{align}
\sup_{[0,t_0]}|\bar{\n}_\Si\mathscr{P}|\leq& C(|\n_{A_1}\phi_1|+|\n_{A_2}\phi_2|)d+C|\bar{A}|,\label{es-n-P}\\
\sup_{[0,t_0]}|\bar{\n}_t\mathscr{P}|\leq& C(|\n_{t,B_1}\phi_1|+|\n_{t,B_2}\phi_2|)d+C|\bar{B}|,\label{es-n_t-P}.
\end{align}

Moreover, the spatial Laplacian of $\mathscr{P}$ satisfies
\begin{equation}\label{es-Hess-P}
\begin{aligned}
\sup_{[0,T_0]}|\tr_{\Si}\bar{\n}^2\mathscr{P}|\leq& C(|\n_{A_1}\phi_1|+|\n_{A_2}\phi_2|+1)^{2}(|\bar{A}|+|\bar{B}|+d+|\Phi|)\\
&+C(|\n^2_{A_1,i}\phi_1|+|\n^2_{A_2,i}\phi_2|)d+C(|\bar{A}|+|A_2-A_0|)|\bar{A}|.
\end{aligned}
\end{equation}
\end{lemma}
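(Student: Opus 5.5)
The plan is to derive all three bounds from the representation \eqref{eq-n-p-1}, $\bar{\n}_X\mathscr{P}=\int_0^1\mathscr{P}\Om_{Xs}\,ds$, together with the curvature bounds \eqref{eq-Omega-space}--\eqref{eq-Omega-time} and the twisted Jacobi field estimates of Lemmas \ref{es-twisted-Jacobi-fields} and \ref{es-n-twisted-J}. Throughout, $t_0$ is chosen as in those lemmas, so that $d\le 1$ and $\norm{W}_{L^\infty}\le \ep_0$.

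\textbf{First-order bounds.} For \eqref{es-n-P}, since $\mathscr{P}$ is an isometry, I will write
\[
|\bar{\n}_i\mathscr{P}|\le \int_0^1|\Om_{is}|\,ds\le C d\sup_s|\n_iU|+|\bar{A}|.
\]
Inserting \eqref{es-n_iU} gives $Cd(|\n_{A_2}\phi_1|+|\n_{A_2}\phi_2|)+C|\bar{A}|d^2+|\bar{A}|$. The term $|\bar{A}|d^2$ is absorbed into $C|\bar{A}|$. Next I will use $\n_{A_2}\phi_1=\n_{A_1}\phi_1-\bar{A}\cdot\phi_1$, so that $|\n_{A_2}\phi_1|\le|\n_{A_1}\phi_1|+C|\bar{A}|$. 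The resulting error $Cd|\bar{A}|$ is again absorbed into $C|\bar{A}|$. This gives \eqref{es-n-P}.

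The temporal bound \eqref{es-n_t-P} is identical, using \eqref{eq-Omega-time} and \eqref{es-n_tU} in place of \eqref{eq-Omega-space} and \eqref{es-n_iU}.

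\textbf{Laplacian bound.} I will start from \eqref{eq-Hess-P}. The quadratic term $C|\bar{\n}\mathscr{P}|^2$ is bounded by squaring \eqref{es-n-P}. This produces $(|\n\phi_1|+|\n\phi_2|)^2d^2+|\bar{A}|^2$. Using $d\le1$, this fits into the first and last terms on the right of \eqref{es-Hess-P}.

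For the main term, differentiate \eqref{eq-n-p-1} spatially. This gives $\bar{\n}_j\bar{\n}_i\mathscr{P}=\int_0^1(\bar{\n}_j\mathscr{P}\,\Om_{is}+\mathscr{P}\n_j\Om_{is})\,ds$, computed in a normal frame at the point and traced over $i=j$. The first piece is again quadratic in first-order quantities and is handled as above. For the second piece I will expand
\[
\Om_{is}=R^M(\n_sU,\n_iU)+\bar{A}_i\cdot,
\]
so that
\[
\n_j\Om_{is}=\n R^M\#\n_jU\#\n_sU\#\n_iU+R^M(\n_j\n_sU,\n_iU)+R^M(\n_sU,\n_j\n_iU)+(\n_j\bar{A}_i)\cdot+\bar{A}_i\#\n_jU.
\]
Each part is then bounded separately.
\begin{itemize}
\item Since $|\n_sU|=d$, the first term is bounded by $d$ times a square of first-order quantities.
\item For the second term, $\n_j\n_sU=\n_s\n_jU$ up to torsion-free corrections, which are controlled by \eqref{es-n_s-n_iU}. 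This produces the factor $|\Phi|+(\ldots)d^2+|\bar{A}|d$, multiplied by $|\n_iU|$.
\item The third term is $d\,|\n_i\n_jU|$, bounded by Lemma \ref{es-n-twisted-J}. This produces $C(|\n^2_{A_1}\phi_1|+|\n^2_{A_2}\phi_2|)d$ plus lower-order terms.
\end{itemize}

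\textbf{Main obstacle.} The hardest step will be the term $\tr(\n_j\bar{A}_i)$, which involves a derivative of $\bar{A}$ and therefore cannot appear on the right of \eqref{es-Hess-P}. To handle it, I will use the trace. By the identity in the proof of \eqref{es-n-bar{A}},
\[
\tr_\Si\n_{A}\bar{A}=-D^*_{A}\bar{A}=\bar{B}-\bar{A}\#(A_2-A_0),
\]
where the covariant derivative along $\tilde{A}(s)$ differs from $\n_{A_1}$ by $s\bar{A}\#\bar{A}$. This replacement yields exactly the terms $|\bar{B}|+|A_2-A_0||\bar{A}|+|\bar{A}|^2$. The same trick is needed for the term $Cd\int_0^1|\n_i\bar{A}_j|\,ds$ coming from Lemma \ref{es-n-twisted-J}. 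Since that term appears only inside a trace, I will rework Lemma \ref{es-n-twisted-J} in traced form, applying Lemma \ref{es-ode} to $\tr\n_i\n_jU$, so that only $\tr\n\bar{A}$ enters and can be replaced in the same way. Collecting all terms then gives \eqref{es-Hess-P}.
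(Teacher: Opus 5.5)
Your proposal is correct and follows essentially the paper's own argument. The first-order bounds come from \eqref{eq-n-p-1}, \eqref{eq-Omega-space}--\eqref{eq-Omega-time} and Lemma~\ref{es-twisted-Jacobi-fields}. The Laplacian is split via \eqref{eq-Hess-P} into a quadratic piece plus $\int_0^1|\nabla_i\Omega_{is}|\,ds$, and the divergence part $\tr\nabla\bar A$ is converted into $|\bar B|+|A_2-A_0||\bar A|+|\bar A|^2$ by the gauge relation \eqref{eq-div-barA}.

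Two remarks. Your point that Lemma~\ref{es-n-twisted-J} must be applied in traced form, so that only $\tr\nabla\bar A$ rather than the full $\nabla\bar A$ enters, makes explicit a step the paper uses without stating it. Also, the commutator $\nabla_j\nabla_sU-\nabla_s\nabla_jU$ is really the gauge term $\pm\bar A_j\cdot U=O(|\bar A|)$ rather than a torsion correction, which is equally harmless.
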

\begin{proof}
By Lemma \ref{es-twisted-Jacobi-fields}, equations \eqref{eq-n-p-1}-\eqref{eq-Omega-time} immediately give \eqref{es-n-P} and \eqref{es-n_t-P}.
	
Now we estimate the Laplacian of $\mathscr{P}$. By formula \eqref{eq-Hess-P}, we obtain
\begin{equation}\label{es-n^2-P}
\begin{aligned}
|\tr_{\Si}\bar{\n}^2\mathscr{P}|=|\bar{\n}_i\bar{\n}_i\mathscr{P}|\leq& |\n_{A_2,i}(\mathscr{P}\n_{A_1,i}-\n_{A_2,i})|+C|\bar{\n}_\Si\mathscr{P}|^2\\
\leq &C\(\int_{0}^1|\Om_{is}|ds\)^2+\int_0^1|\n_i\Om_{s,i}|ds,\\
=&II_1+II_2.
\end{aligned}
\end{equation}
Here, we have used the following bound:
\begin{align*}
|\n_{A_2,i}\((\mathscr{P}\otimes Id)(\mathscr{P}\n_{A_1,i}-\n_{A_2,i}\mathscr{P})\)|=&|\int_0^1\n_i|_{s=1}(\mathscr{P}\Om_{s,i})ds|\\
\leq &\int_0^1|(\n_i|_{s=1}-\mathscr{P}\n_i(s))\Om_{s,i}|ds+\int_0^1|\n_i\Om_{s,i}|ds\\
\leq &C\(\int_{0}^1|\Om_{is}|ds\)^2+\int_0^1|\n_i\Om_{s,i}|ds.
\end{align*}

For $II_1$, using Lemma \ref{es-twisted-Jacobi-fields}, estimate \eqref{eq-Omega-space} gives
\begin{align*}
II_1=& C\(\int_{0}^1|\Om_{is}|ds\)^2\leq C(|\n_{A_2}\phi_1|+|\n_{A_2}\phi_2|)^2d^2+C|\bar{A}|^2.
\end{align*}

For $II_2$, using the formula $\Om_{s,i}=R^{M}(\n_s U,\n_i U)+\bar{A_i}$, we obtain
\begin{align}
|\n_i\Om_{is}|\leq C\(|\n_i U|^2d+|\n_iU||\n_s\n_iU|+|\n_iU||\bar{A}_i|+|\n_i\n_i U|d+|\n_i\bar{A}_i|\).\label{eq-n-Omega}
\end{align}
Moreover, for $0\leq s\leq 1$, the gauge relation
\begin{align*}
\n_i\bar{A}_i(s)=&-\n^*_{A_1-s\bar{A}}\bar{A}=-\n^*_{A_1}\bar{A}-s[\bar{A}_i, \bar{A}_i]\\
=&-\n^*_{A_1}(A_1-A_0)+\n^*_{A_1}(A_2-A_0)\\
=&\bar{B}+\bar{A}\#(A_2-A_0)-s[\bar{A}_i, \bar{A}_i],
\end{align*}
implies
\begin{align}
|\n_i\bar{A}_i|\leq C\bigl(|\bar B|+|A_2-A_0|\,|\bar A|+|\bar A|^2\bigr).
\label{eq-div-barA}
\end{align}
Then, using Lemma  \ref{es-twisted-Jacobi-fields} and \ref{es-n-twisted-J}, estimates \eqref{eq-n-Omega} and \eqref{eq-div-barA} gives
\begin{align*}
II_2\leq& C(|\n_{A_2}\phi_1|+|\n_{A_2}\phi_2|+1)^{2}(|\bar{A}|+|\bar{B}|+|d(\phi_1,\phi_2)+|\Phi|)\\
&+C(|\n^2_{A_1,i}\phi_1|+|\n^2_{A_2,i}\phi_2|)d(\phi_1,\phi_2)+C(|\bar{A}|+|A_2-A_0|)|\bar{A}|.
\end{align*}

Finally, substituting the estimates of $II_1$ and $II_2$ into \eqref{es-n^2-P}, we conclude that
\begin{align*}
|\tr_{\Si}\bar{\n}^2\mathscr{P}|\leq& C(|\n_{A_2}\phi_1|+|\n_{A_2}\phi_2|+1)^{2}(|\bar{A}|+|\bar{B}|+|d(\phi_1,\phi_2)+|\Phi|)\\
&+C(|\n^2_{A_1,i}\phi_1|+|\n^2_{A_2,i}\phi_2|)d(\phi_1,\phi_2)+C(|\bar{A}|+|A_2-A_0|)|\bar{A}|.
\end{align*}

Therefore, the proof is completed.
\end{proof}

\subsection{Application to the comparison operators}
To estimate the terms $V_1$ and $V_2$ in the formula \eqref{es-Phi}, it suffices to control  two
comparison errors:
\[(\mathscr{P}\n_{t,B_1}-\n_{t,B_2}\mathscr{P})\n_{A_1}\phi_1, \quad (\mathscr{P}\n^*_{A_1}\n_{A_1}-\n^*_{A_2}\n_{A_2}\mathscr{P})\n_{A_1}\phi_1.\]
A direct consequence of Lemma \ref{es-P} is the following.
\begin{lemma}\label{es-L_1-L_2}
There exists a $t_0$ and a constant $C$ depending only on the geometry of the fiber manifold $M$ such that
\begin{equation}\label{es-L_1}
|(\mathscr{P}\n_{t,B_1}-\n_{t,B_2}\mathscr{P})\n_{A_1}\phi_1|\leq Cf(|\n_{t,B_1}\phi_1|+|\n_{t,B_2}\phi_2|)d+Cf|\bar{B}|,
\end{equation}
and
\begin{equation}\label{es-L_2}
\begin{aligned}
|(\mathscr{P}\n^*_{A_1}\n_{A_1}-\n^*_{A_2}\n_{A_2}\mathscr{P})\n_{A_1}\phi_1|
\leq& Cf^3(|\bar{A}|+|\bar{B}|+d+|\Phi|)\\
&+Cf(|\n^2_{A_1}\phi_1|+|\n^2_{A_2}\phi_2|)(d+|\bar{A}|)\\
&+Cf(|\bar{A}|+(|A_2-A_0|)|\bar{A}|.
\end{aligned}
\end{equation}
Here, we set $f=\sum_0^1|\n_{A_i}\phi_i|+1$.
\end{lemma}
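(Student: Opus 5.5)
The plan is to write both comparison operators as commutators of $\mathscr P$ with covariant derivatives, using \eqref{eq-n-P}. Then Lemma \ref{es-P} bounds them once we add back the genuinely differential terms.

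\textbf{Step 1: the time estimate \eqref{es-L_1}.} By \eqref{eq-n-P}, applied with the time components of $\tilde A_1=B_1dt+A_1$ and $\tilde A_2=B_2dt+A_2$, we have
\[
(\mathscr{P}\n_{t,B_1}-\n_{t,B_2}\mathscr{P})\varphi_1=-(\bar\n_t\mathscr P)\varphi_1,\qquad \varphi_1=\n_{A_1}\phi_1 .
\]
Here $\mathscr P$ extends to $T^*\Si$-valued tensors, and acts trivially on the form factor. So the left side of \eqref{es-L_1} is bounded by $|\bar\n_t\mathscr P|\,|\varphi_1|\le f|\bar\n_t\mathscr P|$. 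Inserting \eqref{es-n_t-P} gives \eqref{es-L_1} directly.

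\textbf{Step 2: expand the Laplacian comparison.} Write $\n^*_A\n_A=-\tr_\Si\n_A^2$. Applying the Leibniz rule twice, with \eqref{eq-n-P} in the spatial directions, gives the expansion
\[
(\mathscr{P}\n^*_{A_1}\n_{A_1}-\n^*_{A_2}\n_{A_2}\mathscr{P})\varphi_1
=(\tr_\Si\bar\n^2\mathscr P)\varphi_1+2\,\tr_\Si\big(\bar\n\mathscr P\otimes(\n_{A_1}\varphi_1)\big)+\text{(terms with }\bar\n\mathscr P\cdot\bar\n\mathscr P\,\varphi_1).
\]
The quadratic terms arise from rewriting $\n_{A_2}\mathscr P$ as $\mathscr P\n_{A_1}-\bar\n\mathscr P$ inside the second derivative. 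I would check the signs carefully, but only absolute values matter here.

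\textbf{Step 3: bound each piece.} For the first term, \eqref{es-Hess-P} times $|\varphi_1|\le f$ gives the $f^3(|\bar A|+|\bar B|+d+|\Phi|)$ part. It also gives the $f(|\n^2_{A_1}\phi_1|+|\n^2_{A_2}\phi_2|)d$ part and the $f(|\bar A|+|A_2-A_0|)|\bar A|$ part. For the cross term, \eqref{es-n-P} gives
\[
|\bar\n_\Si\mathscr P|\,|\n^2_{A_1}\phi_1|\le C\big(fd+|\bar A|\big)|\n^2_{A_1}\phi_1|,
\]
which lands in the second group $f(|\n^2\phi_1|+|\n^2\phi_2|)(d+|\bar A|)$. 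Lemma \ref{es-twisted-Jacobi-fields} lets us replace $|\n_{A_2}\phi_1|$ by quantities bounded by $f+|\bar A|d$; the extra $|\bar A|d$ is absorbed because $d$ is small on $[0,t_0]$. For the quadratic term, $|\bar\n\mathscr P|^2f\le Cf(f^2d^2+|\bar A|^2)$. With $d\le1$ this is at most $Cf^3d+Cf|\bar A|^2$, which is covered by the first and last groups.

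\textbf{Expected main obstacle.} The main difficulty is the bookkeeping in Step 2, in particular handling terms in which $\n_{A_2}$ instead of $\n_{A_1}$ acts on objects transported from $\phi_1$. Converting between these produces $\bar A$ and $\bar\n\mathscr P$ factors, and these must all be shown to fit the stated right-hand side. The Hessian bound itself is already provided by Lemma \ref{es-P}, so I do not expect analytic difficulty beyond that.
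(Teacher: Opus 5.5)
Your proposal is correct and follows essentially the paper's proof. You obtain \eqref{es-L_1} from $(\mathscr P\n_{t,B_1}-\n_{t,B_2}\mathscr P)\n_{A_1}\phi_1=-(\bar\n_t\mathscr P)\n_{A_1}\phi_1$ together with \eqref{es-n_t-P}, and \eqref{es-L_2} by expanding the Laplacian comparison into $\tr_\Si\bar\n^2\mathscr P\,\n_{A_1}\phi_1$ plus $2\langle\bar\n_\Si\mathscr P,\n^2_{A_1}\phi_1\rangle$ and then applying \eqref{es-n-P} and \eqref{es-Hess-P}; two side remarks do not affect the argument. With the tensor-product connection on the Hom bundle, the Leibniz rule produces no $\bar\n\mathscr P\cdot\bar\n\mathscr P$ terms, and your replacement of $|\n_{A_2}\phi_1|$ is unnecessary because Lemma \ref{es-P} is already stated with $|\n_{A_1}\phi_1|+|\n_{A_2}\phi_2|$ (if you did need it, the correct bound is $|\n_{A_1}\phi_1|+C|\bar A|$, not $f+|\bar A|d$).
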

\begin{proof}
Since
\[(\mathscr{P}\n_{t,B_1}-\n_{t,B_2}\mathscr{P})\n_{A_1}\phi_1=-\bar{\n}_t\mathscr{P}\n_{A_1}\phi_1,\] 
then we apply Lemma \ref{es-P} to obtain
\begin{align*}
|(\mathscr{P}\n_{t,B_1}-\n_{t,B_2}\mathscr{P})\n_{A_1}\phi_1|\leq& C|\bar{\n}_t\mathscr{P}||\n_{A_1}\phi_1|\\
\leq&C|\n_{A_1}\phi_1|(|\n_{t,B_1}\phi_1|+|\n_{t,B_2}\phi_2|)d+C|\n_{A_1}\phi_1||\bar{B}|,
\end{align*}
which implies the estimate \eqref{es-L_1}.
	
On the other hand, a direct computation shows
\begin{align*}
\mathscr{P}\n^*_{A_1}\n_{A_1}\n_{A_1}\phi_1-\n^*_{A_2}\n_{A_2}\mathscr{P}\n_{A_1}\phi_1=-\tr_{\Si}\bar{\n}^2\mathscr{P}\n_{A_1}\phi_1-2\<\bar{\n}_\Si\mathscr{P}, \n_{A_1}\n_{A_1}\phi_1\>.
\end{align*}
Then the estimates \eqref{es-n-P} and \eqref{es-Hess-P} in Lemma \ref{es-P} imply the bound \eqref{es-L_2}.	
\end{proof}

\section*{Acknowledgments}

B. Chen is partially supported by NSFC (Grant No. 12301074) and Guangdong Basic and Applied Basic Research Foundation (Grant No. 2025 A1515010502). C. Song is partially supported by NSFC (Grant No. 12371061) and Natural Science Foundation of Fujian Province of China (Grant No. 2026J011003).

\end{document}